 \DeclareSymbolFont{cyrillic}{T2A}{cmr}{m}{n}
 \DeclareMathSymbol{\zemlja}{\mathalpha}{cyrillic}{199}
 \DeclareMathSymbol{\sha}{\mathalpha}{cyrillic}{216}
 \DeclareMathSymbol{\cheiri}{\mathalpha}{cyrillic}{149}
 \DeclareMathSymbol{\kako}{\mathalpha}{cyrillic}{138}
  \DeclareMathSymbol{\yer}{\mathalpha}{cyrillic}{218}
\newcommand{\Ker}{\mbox{Ker}}
\renewcommand{\mod}{\mbox{mod}}
\newcommand{\Mod}{\mbox{Mod}}
\newcommand{\End}{\mbox{End}}
\newcommand{\rank}{\mathrm{rank}}
\renewcommand{\dim}{\mathrm{dim}}
\newcommand{\Vect}{\mathrm{Vect}}
\DeclareMathOperator{\ord}{ord}
\newenvironment{narrow}[2]{
 \begin{list}{}{%
  \setlength{\topsep}{0pt}%
  \setlength{\leftmargin}{#1}%
  \setlength{\rightmargin}{#2}%
  \setlength{\listparindent}{\parindent}%
  \setlength{\itemindent}{\parindent}%
  \setlength{\parsep}{\parskip}%
 }%
\item[]}{\end{list}} 
\newcommand{\id }{\mathrm{id}}
\newcommand{\B}{\mathcal{B}} 	
\newcommand{\g}{\mathfrak{g}} 	
\renewcommand{\sl}{\mathfrak{sl}}	
\newcommand{\so}{\mathfrak{so}}	
\newcommand{\Z}{\mathbb{Z}}  	
\newcommand{\N}{\mathbb{N}}  	
\renewcommand{\C}{\mathbb{C}}  	
\newcommand{\Q}{\mathbb{Q}}  	
\newcommand{\V}{\mathcal{V}}	
\theoremstyle{plain}
\newtheorem{theorem}{Theorem}[section]
\newtheorem*{acknowledgementX}{Acknowledgement}
\newtheorem{conjecture}[theorem]{Conjecture}
\newtheorem{corollary}[theorem]{Corollary}
\newtheorem{lemma}[theorem]{Lemma}
\newtheorem{definition}[theorem]{Definition}
\newtheorem{question}[theorem]{Question}
\newtheorem{remark}[theorem]{Remark}
\theoremstyle{remark}
\newtheorem{example}[theorem]{Example}
\renewcommand{\exp}[1]{{\mathrm e}^{#1}}
\renewcommand{\i}{{\mathrm i}}
\newcommand{\Y}{{\mathrm Y}}
\newcommand{\resY}{\mathrm{resY}}
\newcommand{\W}{{\mathcal W}}
\renewcommand{\L}{{\mathcal L}}
\newcommand{\oshort}{\ominus}
\newcommand{\olong}{\oplus}
\newcommand{\ocartan}
\newcommand{\md}{\text{-}}
\newcommand{\res}[1]{\mathrm{res}\left(#1\right)}
\newcommand{\zem}{\zemlja}
\newcommand{\SF}{\mathcal{SF}}
\newcommand{\hamburger}[4] 
{
  \thispagestyle{empty}
  \vspace*{-2cm}
  \begin{flushright}
    ZMP-HH / #2 \\
    Hamburger Beitr{\"a}ge zur Mathematik Nr. #3 \\
    #4\\
  \end{flushright}
  \vspace{0.5cm}
  \begin{center}
    \Large \bf
    #1
  \end{center}
  \vspace{0.2cm}
  \begin{center}	
    Ilaria Flandoli, Simon Lentner \\
    Algebra and Number Theory, Center of Mathematical Physics \\
    University Hamburg, 
    Bundesstra{\ss}e 55, D-20146 Hamburg \\
    \texttt{simon.lentner@uni-hamburg.de}
  \end{center}
  \vspace{-0.8cm}
}
\begin{document}

\hamburger{Logarithmic conformal field theories of type $B_n,\ell=4$ \\ and symplectic fermions}{17-18}{663}{June 2017}

\begin{abstract}
  There are important conjectures about logarithmic conformal field theories (LCFT), which are constructed as kernel of screening operators acting on the vertex algebra of the  rescaled root lattice of a finite-dimensional semisimple complex Lie algebra. In particular their representation theory should be equivalent to the representation theory of an associated small quantum group. 
  
  This article solves the case of the rescaled root lattice $B_n/\sqrt{2}$ as a first working example beyond $A_1/\sqrt{p}$. We discuss the kernel of short screening operators, its  representations and graded characters. Our main result is that this vertex algebra is isomorphic to a well-known example: The even part of $n$ pairs of symplectic fermions.
  
  In the screening operator approach this vertex algebra appears as an extension of the vertex algebra associated to rescaled $A_1^n$, which are $n$ copies of the even part of one pair. The new long screenings give the global $C_n$-symmetry. The extension is due to a degeneracy in this particular case: Rescaled long roots still have even integer norm.
 
  The associated quantum group of divided powers has similar degeneracies \cite{LentCCM}: It contains the small quantum group of type $A_1^n$ and the Lie algebra $C_n$. Recent results \cite{FGR17b} on symplectic fermions suggest finally the conjectured category equivalence to this quantum group. 
  We also study the other degenerate cases of a quantum group, giving extensions of LCFT's of type $D_n,D_4,A_2$ with larger global symmetry $B_n,F_4,G_2$. 
\end{abstract}
\title[]{}
\maketitle

\newpage

\thispagestyle{empty}
\setcounter{tocdepth}{2}
\tableofcontents

\newpage

\section{Introduction}

An important set of conjectures \ref{conj_Main} is concerned with the construction of vertex algebras $\W$, underlying logarithmic conformal field theories, which should have the same representation theory as a small quantum group $u_q(\g)$. The goal of this article is to treat the examples $\g=B_n,\ell=4$ in this regard as a nice working example beyond the known case $A_1$. As conclusion we are able to prove that $\W$ is isomorphic to the vertex algebra of $n$ pairs of symplectic fermions $\V_{\SF_n^{even}}$. Moreover from recent work \cite{FGR17b} on symplectic fermions (if one of their conjectures holds) we can deduce that the representation category is equivalent to representations of a quasi-Hopf algebra $\tilde{u}$ closely related to the quantum group $u_q(A_1^n)$, which is the expected quantum group (section \ref{sec_quantumgroup}). In particular up to this issue all conjectures \ref{conj_Main} hold in this example.\\

More precisely, let $\g$ be a complex finite-dimensional semisimple Lie algebra and $q$ a primitive $\ell$th root of unity. Then the \emph{small quantum group} $u_q(\g)$ is a Hopf algebra defined by Lusztig in \cite{Lusz90a}: It is a finite-dimensional quotient of a $q$-deformation of the universal enveloping algebra $U(\g)$, specialized in a particular way to an $\ell$-th root of unity $q$. Since $u_q(\g)$ is a Hopf algebra, its category of finite-dimensional representations is naturally endowed with a monoidal structure $\otimes_\C$ and dualities, much like representations of the Lie algebra $\g$ can be tensored and dualized. This category is non-semisimple and related to the representations of $\g$ in finite characteristic $\ell$ (if prime) and to the representations of the affine Lie algebra at level $-\ell-h^\vee$. For odd $\ell$, the category can be endowed with a nondegenerate braiding and thus becomes a \emph{modular tensor category}. For even $\ell$ (as in this article) this is still morally true, but in detail one either has to extend the coradical, see \cite{LO16}, or introduce a nontrivial associator as in \cite{GR}. One application of this structure is the construction of topological invariants of knots and $3$-manifolds \cite{Lyu}.\\

Another source for modular tensor categories are representation of \emph{vertex algebras}. These are algebraic structures with an extra layer of analysis, in that the multiplication $a\otimes b\mapsto \Y(a)b$ is a Laurent series in a formal parameter $z$, and it is compatible with an action of the Virasoro algebra. Vertex algebras encode an essential part of conformal quantum field theories.
A vertex algebra has an associated representation theory and under certain finiteness-conditions one can show that this category is always a braided tensor category \cite{HLZ10}, consisting of infinite-dimensional graded vector spaces. If this category is semisimple, it is automatically a modular tensor category (this is conjecturally true regardless of semisimplicity) and the graded dimensions $\sum_n \dim(V_n)t^n$ of the irreducible modules piece together to a vector-valued modular form (in the nonsemisimple case additional pseudo-characters are required). A main example in what follows is the \emph{lattice vertex algebra} $\V_{\Lambda}$ associated to an even integral lattice $\Lambda$; physically this is a free boson on a torus. In this case the representation category is a semisimple modular tensor equivalent to the category of $\Lambda^*/\Lambda$-graded vector spaces, with associator $\omega$ and braiding $\sigma$ derived from the quadratic form $e^{\pi\i(\lambda,\lambda)}$. The graded dimension of an irreducible  module $\V_{[\lambda]}$ associated to a coset $[\lambda]\in \Lambda^*/\Lambda$ is essentially the Jacobi $\Theta$-function associated to this lattice coset.\\

In this article we study examples of vertex algebras, which fulfill the finiteness conditions, but have non-semisimple representation theory with nondiagonalizable $L_0$-action of the Virasoro algebra. We call such a vertex algebra \emph{logarithmic} opposed to \emph{rational} in the semisimple case; the name comes from the singularites in the associated analytic functions. This non-semisimple theory is much less developed than the semisimple theory and only few examples are known. The best understood are the triplet algebra $\W_{p}$ \cite{Kausch91,FFHST02,AM08}, its generalization $\W_{p,q}$ \cite{FGST06c,GRW09,AM11}, and the algebra of $n$ pairs of symplectic fermions $\V_{\SF_n^{even}}$ \cite{Kausch,Abe07,Runkel12,DR16}.

An intriguing construction for vertex algebras are \emph{free-field-realizations}, starting with \cite{Wak86,FF,Fel89}: One begins with a lattice vertex algebra $\V_\Lambda$ and constructs \emph{screening operators} acting on the vertex algebra and its representations. Then the kernel of the screening operators give usually a much more complicated vertex algebra $\W$. In particular a famous set of conjectures \ref{conj_Main} is concerned with $\Lambda$ the root-lattice of a Lie algebra $\g$ rescaled by some number $\ell$, and $\W$ the kernel of short screenings. Then the non-semisimple representation category of $\W$ should be equivalent to the representation category of the respective quantum group $u_q(\g)$. 

Much work has been done on the case $\g=\sl_2$, which is now solved as an abelian category \cite{FFHST02,FGSTsl2,AM08,TN,TW}. In particular for $\ell=2p=4$ the vertex algebra $\W$ is isomorphic to the known LCFT $\V_{\SF_1^{even}}$ and for $\ell=2p$ is isomorphic to the known triplet algebra $\W_p$; in this case \cite{GR,CGR17}  construct a quasi-Hopf algebra variant of the quantum group $u_q(\sl_2)$, such that there is even an equivalence of braided monoidal categories. In \cite{FT} the program is discussed for arbitrary simply-laced Lie algebras $\g$, including conjectural formulae for the graded dimensions in terms of false $\Theta$-functions. In \cite{ST12,ST13,S14} the program has been extended to Nichols algebras, stressing the general appearance of quantum symmetrizers. In \cite{LentQGNA} the second author has proven in general that the screening operators give an action of the small quantum group $u_q(\g)^+$ (and more generally of a diagonal Nichols algebra) on the vertex algebra.\\

In Section 2 we thoroughly discuss the screening charge method with minor own additions to cover the case of $\g$ non-simply-laced. The important output is the lattice $\Lambda^\olong$ on which the free-field realization $\V_{\Lambda^\olong}$ is built, with a basis of \emph{long screening momenta} $\alpha_i^\olong$, and a finer lattice ${\Lambda^\oshort}$ with a basis of \emph{short screening momenta} $\alpha_i^\oshort$, together with a choice of an action of the Virasoro algebra. Then \emph{long} and \emph{short  screening operators} $\zem_{\alpha_i^\olong},\zem_{\alpha_i^\oshort}$ are defined by taking formal residues of the power series associated to the vertex operator of certain elements $\Y(\exp{\phi_{\alpha_i^\olong}}),\Y(\exp{\phi_{\alpha_i^\oshort}})$. The second vertex operator consists of fractional $z$-powers, which makes everything much more involved and interesting. The choices are such that by \cite{LentQGNA} the short screening operators give an action of the small quantum group. All long screenings operators preserve the Virasoro action, as do suitable powers of short screening operators; this is the idea underlying the Felder complex \cite{Fel89} in rank $1$.\\

In Section 3 we formulate the interesting conjectures about the vertex algebra $\W\subset \V_{\Lambda^\olong}$ that is obtained by taking the kernel of all short screenings $\zem_{\alpha_i^\oshort}$.\\

In Sections 4, 5, 6 we thoroughly discuss the known example $A_1=B_1,\ell=4$ and the new examples $B_2,\ell=4$ before the general case $B_n,\ell=4$. In each case we describe the involved lattices, the kernel of the short screenings $\W$, the representations $\V_{[\lambda]}$ of the lattice vertex algebra $\V_{\Lambda}$ and then their decomposition when we restrict them to $\W$. These examples are interesting because they are the first beyond $A_1$, and also because they are non-simply-laced, which e.g. implies that the lattice $\Lambda^\oshort$ is a rescaled $B_n$-lattice while the lattice $\Lambda^\olong$ is a rescaled $C_n$-lattice of the dual Lie algebra. 

In addition because the value of $\ell$ is small compared to the root lengths of long roots, these examples have an interesting and simplifying degeneracy: Because short screenings associated to long simple roots in $B_n$ are equal to long screenings of the short coroots in $C_n$, we actually only have to work with short screenings associated to short roots of $B_n$, which is a subsystem of type $A_1^n$. It is a nice educational feature of this example that these three lattices serving three different purposes in the construction are actually all different.\\

In Section 7 we prove that the kernel of the screening $\W$ is isomorphic to a known LCFT: The vertex algebra of $n$ pairs of symplectic fermions $\V_{\SF_n^{even}}$. First we describe the vertex algebra $\V_{\SF_n^{even}}$ and the super vertex algebra $\V_{\SF_n}$ and their representations. Then we match the graded dimensions of the irreducible modules to those of $\W$ constructed in the previous section; this was a very promising check early in our work. We finally prove the vertex algebra isomorphism of $\W$ for $B_n,\ell=4$ to $\V_{\SF_n^{even}}$.

In view of the fact that this is well-known for $A_1=B_1,\ell=4$, and that we can only take short screenings $A_1^n$, and that the super vertex algebras $\V_{\SF_n}=(V_{\SF_1})^n$, this result is not very surprising and not hard to prove. Our $\W$ contains a sub vertex algebra equivalent to $(\V_{\SF_1^{even}})^n$ and the only thing to prove is that the larger lattice $B_n\supset A_1^n$ leads to an extension of this lattice algebra to $(\V_{\SF_1})^{n,even}=\V_{\SF_n^{even}}$. At the same time the Lie algebra generated by the long screenings extends from $A_1^n=\sl_2^n$ to $C_n=\mathfrak{sp}_{2n}$, which is the right global symmetry for $\V_{\SF_n},\V_{\SF_n^{even}}$.\\

In Section 8 we turn to the quantum group side for $B_n,\ell=4$, where the picture is very nicely matching: Because the order of $\ell$ is small compared to the root lengths in this non-simply-laced Lie algebra, the usual theory does not quite hold: If $\ell$ has no common divisors with root lengths, then by \cite{Lusz90a} the infinite-dimensional Lusztig quantum group of divided powers $U_q^\L(\g)$, the specialization of the generic deformation $U_q(\g)$ to a root of unity, decomposes into the small quantum group $u_q(\g)$ and $U(\g)$.

The case of arbitrary $\ell$ is more involved and treated in the second author work \cite{LentCCM}: For $B_n,\ell=4$ it turns out that the Lusztig quantum group decomposes into the small quantum group $u_q(A_1^n)$, associated to only the short root vectors, and $U(\g^\vee)=U(C_n)$ acting on it by adjoint action.\\

As conclusion we are able to deduce from recent work \cite{FGR17b} on symplectic fermions (if one of their conjectures holds) that the representation category is equivalent to representations of a quasi-Hopf algebra $\tilde{u}$ closely related to the quantum group $u_q(A_1^n)$, which is what we expect from the quantum group side.\\

In Section 9 we discuss briefly the other degenerate cases $C_n,F_4,\ell=4$ and $G_2,\ell=6$, which should again be LCFT's that are extensions of LCFT's of type $D_n,D_4,A_2$ with appropriate central charge. We would be interested to know if these examples are already known, in order to compare the structure and to increase the number of examples for which the screening charge method is known to work.

\section{Setting and general results}

The following Hopf algebra description of generalized vertex algebras associated to (non-integral) lattices and screening charge operators has been developed in \cite{Len07,LentQGNA}. We repeat the relevant definitions and give examples, but for further reading we refer the reader to the second paper.

\subsection{The lattice vertex algebra}
Let $\Lambda \subset \mathbb{C} ^{\rank}$ be a lattice with basis $\left\{\alpha_1, \ldots, \alpha_{\rank}\right\}$ and inner product $(\;,\;): \Lambda \times \Lambda \rightarrow \frac{1}{N} \mathbb{Z}$. 

\begin{definition}
  $\V_\Lambda$ is the commutative, cocommutative, infinite-dimensional $\mathbb{N}_0$-graded Hopf algebra  generated by the formal symbols  
\begin{displaymath} e^{{\phi}_\beta}, \qquad \qquad {\partial^{1+k}\phi}_\alpha,\qquad  \alpha, \beta\in \Lambda,\;k\in\N_0
\end{displaymath} 
They are subject to the algebra relations  
\begin{align*}
e^{{\phi}_\alpha}e^{{\phi}_\beta}
&= e^{{\phi}_{\alpha + \beta}}  &\alpha, \beta \in \Lambda\\
{\partial^{1+k}\phi}_{a\alpha + b\beta} 
&= a{\partial^{1+k}\phi}_\alpha + b {\partial^{1+k}\phi}_\alpha &a, b \in \mathbb{Z}
\end{align*}
As a coalgebra the symbols ${\partial^{1+k}\phi}_\alpha$ are primitive and the $e^{{\phi}_\beta}$ are grouplike:
\begin{align*}
\Delta{\partial^{1+k}\phi}_\alpha 
&= 1\otimes{\partial^{1+k}\phi}_\alpha + {\partial^{1+k}\phi}_\alpha \otimes 1 \\
\Delta e^{{\phi}_\beta}
&= e^{{\phi}_\beta} \otimes e^{{\phi}_\beta}
\end{align*} 
In the $\N_0$ grading the $e^{{\phi}_\beta}$ have degree $0$ and the ${\partial^{1+k}\phi}_\alpha$ have degree $k$. We call $\left|u\right|$ the $\mathbb{N}_0$-degree of a differential polynomial $u$.\\

\noindent
An arbitrary element in $\V_\Lambda$ is of the form $ue^{{\phi}_\lambda}$ with $\beta \in \Lambda$ and $u$ differential polynomial.
\end{definition}

Additional structure on $\V_\Lambda$ is a Hopf algebra derivation  $\partial$, raising the $\N_0$-degree by $1$.
  \begin{align*}
  \partial.\exp{\phi_\alpha}
  &:=\partial\phi_\alpha\cdot\exp{\phi_\alpha}\\
  \partial.\partial^k\phi_\alpha
  &:=\partial^{k+1}\phi_\alpha
  \end{align*}
and a Hopf pairing on $\V_\Lambda$ with values in the ring of Laurent polynomials $R=\C[z^{1/N},z^{-1/N}]$, equivariant with respect to the action of $\partial$ on $R$ by $-\frac{\partial}{\partial z}$
  \begin{align*}
   \langle \exp{\phi_\alpha},\exp{\phi_\beta} \rangle
   &=z^{(\alpha,\beta)}\\
   \langle \exp{\phi_\alpha},\partial \phi_\beta \rangle
   &=-(\alpha,\beta)z^{-1}\\
   \langle \partial \phi_\alpha, \exp{\phi_\beta} \rangle
   &=(\alpha,\beta)z^{-1}\\
   \langle \partial \phi_\alpha, \partial\phi_\beta \rangle
   &=(\alpha,\beta)z^{-2}
  \end{align*}

As defined in \cite{Len07,LentQGNA}, these structures give rise to a construction-scheme for \emph{generalized vertex operator}. Relevant properties generalizing locality and associativity can be proven in general and it is also proven in general that for integer $z$-powers $R=\C[z,z^{-1}]$  the resulting structure is always a vertex algebra in the familiar sense. It would be desirable to link these results to generalized vertex algebras in the sense of \cite{DL93}.

In the present article we  restrict ourselves to the fractional lattice VOA with $(,):\Lambda\times \Lambda\to \frac{1}{N}\Z$ defined as follows:

\begin{definition}
  The \emph{Vertex operator} $\Y$ is defined as 
  \begin{align*} 
  \Y : \V_\Lambda &\rightarrow \End (\V_\Lambda )[[z^{\frac{1}{N}} ,z^{-\frac{1}{N}}]]\\ 
  a &\longmapsto \left(b\mapsto \sum \limits_{k \geq 0} \left\langle a^{(2)}, b^{(2)}\right\rangle \cdot b^{(1)} \cdot \frac{z^k}{k!} \partial^k .a^{(1)}\right) 
  \end{align*}
\end{definition}
\begin{remark}
  More generally, for some function $\kappa:\Lambda\times \Lambda\to \C^\times$ we can twist the relation of the groupring $e^{{\phi}_\alpha}e^{{\phi}_\beta}
  = \kappa(\alpha,\beta) e^{{\phi}_{\alpha + \beta}}$, then $\V_\Lambda$ is a comodule algebra over $\V_\Lambda$ if $\kappa$ is a $2$-cocycle resp. over the coquasi Hopf algebra $\V_\Lambda^{d\kappa}$. 
  
  This is frequently done for the lattice algebra to remove naturally appearing signs. It is similarly done in \cite{FT} to remove naturally appearing anticommutators in the Lie algebra. In the following this choice does not matter. 
\end{remark}
\begin{theorem}
If the lattice $\Lambda$ is an even resp odd integer lattice, then this defines the usual structure of a lattice vertex algebra resp. super vertex algebra. 

If $\Lambda$ is a fractional lattice, then the locality in $\V_\Lambda$ is replaced by much more complicated Nichols algebra relations, see \cite{LentQGNA} Section 4; it is these relations that make the screening operators in the next section interesting. 
\end{theorem}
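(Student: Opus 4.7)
The plan is to unpack the Hopf-algebraic formula for $\Y$ on the generators $\exp{\phi_\alpha}$ and $\partial^k\phi_\alpha$ and check that in the integer case it reproduces the classical lattice vertex operators, while in the fractional case the failure of ordinary locality is governed by the braiding $q^{(\alpha,\beta)}=\exp{\pi\i(\alpha,\beta)}$ that defines a diagonal Nichols algebra.

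First, I would expand $\Y(\exp{\phi_\alpha},z)$ on a basis element $u\exp{\phi_\beta}$. Using the grouplike coproduct $\Delta\exp{\phi_\alpha}=\exp{\phi_\alpha}\otimes\exp{\phi_\alpha}$, the derivation property of $\partial$, and the multiplicativity of the pairing over the coproduct of $b$, the defining Sweedler sum should collapse to a product of a zero-mode factor $z^{(\alpha,\beta)}$ coming from $\langle\exp{\phi_\alpha},\exp{\phi_\beta}\rangle$, a translation factor $\exp{\phi_{\alpha+\beta}}$, and exponentials of the primitives $\partial^k\phi_\alpha$ in two flavours: acting through the product (creation) or through the iterated pairing descending from $\langle\partial\phi_\alpha,\partial\phi_\beta\rangle=(\alpha,\beta)z^{-2}$ (annihilation, producing $z^{-k}$ powers). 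Identifying modes, this should reproduce the classical expression $\exp{\phi_\alpha}z^{\alpha_0}\exp{\sum_{k\ge 1}\tfrac{\alpha_{-k}}{k}z^k}\exp{-\sum_{k\ge 1}\tfrac{\alpha_k}{k}z^{-k}}$ for the lattice vertex operator.

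Second, assuming $(,)$ takes integer values, I would verify the three vertex-algebra axioms directly. The vacuum axiom $\Y(1,z)=\id$ is immediate since $1$ is grouplike with trivial pairing and the $k=0$ term contributes only the identity. Translation covariance follows from the $\partial$-equivariance of the pairing, namely $\partial_z\langle a,b\rangle=-\langle\partial a,b\rangle-\langle a,\partial b\rangle$, plugged into the defining formula. Locality then reduces to comparing $\Y(\exp{\phi_\alpha},z_1)\Y(\exp{\phi_\beta},z_2)$ expanded in $|z_1|>|z_2|$ with the opposite ordering expanded in $|z_2|>|z_1|$; the Hopf pairing produces matching normally-ordered factors with prefactors $(z_1-z_2)^{(\alpha,\beta)}$ and $(z_2-z_1)^{(\alpha,\beta)}$. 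For $(\alpha,\beta)\in\Z$ these differ only by $(-1)^{(\alpha,\beta)}$, giving locality with a sign and hence a bosonic VOA for even $\Lambda$ and a super VOA for odd $\Lambda$.

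Third, when $(,)$ takes values only in $\tfrac{1}{N}\Z$, the factor $z^{(\alpha,\beta)}$ is a true fractional power in $R=\C[z^{1/N},z^{-1/N}]$, so $(z_1-z_2)^{(\alpha,\beta)}$ is multi-valued; the two orderings differ only by analytic continuation along opposite paths around $z_1=z_2$, producing a braided commutation by the phase $q^{(\alpha,\beta)}=\exp{\pi\i(\alpha,\beta)}$. Ordinary locality is therefore replaced by braided locality, which is exactly the defining relation of the diagonal braiding of a Nichols algebra $\mathfrak{B}(q^{(\alpha_i,\alpha_j)})$. The plan is then to invoke \cite[Sec.\,4]{LentQGNA} to upgrade this pairwise braided commutation, together with the residues of the vertex operators, to the full system of (quantum Serre–type) Nichols algebra relations. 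The main obstacle lies precisely in this last step: choosing a consistent branch of $(z_1-z_2)^{1/N}$ across products of several operators and translating the resulting residue identities into the full Nichols algebra presentation is genuinely nontrivial, and the reference to \cite{LentQGNA} reflects that this is the delicate content beyond the formal computation.
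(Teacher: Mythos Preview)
The paper does not give an in-text proof of this theorem; it is stated as a result imported from \cite{Len07,LentQGNA}, with the preceding paragraph already pointing there for the verification that the Hopf-algebraic $\Y$ yields a vertex algebra in the integer case and a generalized structure otherwise. Your sketch is therefore not being compared to a proof here but to a citation, and in that sense your plan---unpack $\Y$ on grouplikes and primitives, check vacuum/translation/locality, then defer the fractional case to \cite{LentQGNA}---is exactly in line with what the paper invokes.

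One genuine gap in your integer-case argument: your locality step produces the discrepancy $(-1)^{(\alpha,\beta)}$ between the two orderings, and you conclude ``bosonic for even $\Lambda$, super for odd $\Lambda$.'' But evenness of $\Lambda$ only forces $(\alpha,\alpha)\in 2\Z$, not $(\alpha,\beta)\in 2\Z$ for $\alpha\neq\beta$, so these signs do not disappear and you do not get honest locality on the nose. This is the standard cocycle obstruction, and the paper addresses it in the remark immediately preceding the theorem by allowing a twist $e^{\phi_\alpha}e^{\phi_\beta}=\kappa(\alpha,\beta)e^{\phi_{\alpha+\beta}}$; your argument needs to invoke the same $\kappa$ to absorb the signs before you can conclude. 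With that fix, your outline is sound.
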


We now discuss the representation theory of the vertex algebra $\V_\Lambda$, say for $\Lambda$ an even integer lattice. The following results are well-known in literature, for the abelian category see \cite{Dong93}, the general tensor product \cite{HLZ10} can be constructed in these case by considering the generalized vertex algebra $\V_{\Lambda^*}$, and the associator and braiding follow uniquely from the quadratic form\footnote{Thanks to T. Gannon for pointing this out to me.}:

\begin{theorem}
   Assume $\Lambda$ an even integer lattice, then $\V_\Lambda\md\Mod$ is a semisimple modular tensor category. The simple objects are parametrized by classes $[\lambda]\in \Lambda^*/\Lambda$ for the dual lattice $\Lambda^*$ and they can be explicitly realized inside the generalized vertex algebra $V_{\Lambda^*}\in \V_\Lambda\md\Mod$ with the generalized vertex operator above. It decomposes as a $\V_{\Lambda}$-module
   $$\V_{\Lambda^*} = \bigoplus_{[\lambda] \in \Lambda^*/\Lambda}  \V_{[\lambda]}
   \qquad V_{[\lambda]}:=\{u \exp{\phi_\beta}\mid \beta\in[\lambda]\}$$
   and we call $\V_\Lambda=\V_{[0]}$ the \emph{vacuum module} or \emph{regular representation}.\\
   
   As a braided tensor category $\V_\Lambda\md\Mod$ with a Virasoro action given by $T^Q$ is equivalent to the modular tensor category of graded vector-spaces $\Vect_{\Lambda^*/\Lambda}$ with associator and braiding given by any abelian $3$-cocycle $(\omega,\sigma)$ associated to the quadratic form 
   $$F([\lambda])=e^{\pi\i\;(\lambda-Q,\lambda-Q)-(Q,Q)}$$
   Then even integrality condition ensures $F$ is a well-defined function on $\Lambda^*/\Lambda$, furthmore it does not change under adding some $\kappa$ as in the previous remark. The nondegeneracy of the double-braiding $e^{\pi\i\;2(\lambda,\mu)}$ clearly holds by construction.
\end{theorem}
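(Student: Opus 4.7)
The plan is to climb from the abelian category to the full modular structure and then identify the braided monoidal equivalence via its classifying abelian $3$-cocycle. I would start by invoking Dong's classification: for $\Lambda$ even integral the irreducible $\V_\Lambda$-modules are precisely the $\V_{[\lambda]}$ indexed by $[\lambda]\in\Lambda^*/\Lambda$, and the category is semisimple. The explicit realization inside the generalized vertex algebra $\V_{\Lambda^*}$ and the decomposition formula both follow once one checks stability: since $\Lambda$ is even integral, $(\alpha,\beta)\in\Z$ for $\alpha\in\Lambda,\beta\in\Lambda^*$, so the generalized vertex operator $\Y(a)$ with $a\in\V_\Lambda$ produces only integer $z$-powers on $u\exp{\phi_\beta}$, preserving each coset. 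The tensor product is then supplied by HLZ, but following the footnote one can construct it concretely inside $\V_{\Lambda^*}$: the generalized vertex operator sends $\exp{\phi_\lambda}\otimes \exp{\phi_\mu}\mapsto \exp{\phi_{\lambda+\mu}}$, giving fusion rules $\V_{[\lambda]}\boxtimes\V_{[\mu]}\cong \V_{[\lambda+\mu]}$ and exhibiting the category as pointed with group of invertibles $\Lambda^*/\Lambda$.

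To pin down the braided structure I would invoke the Eilenberg--MacLane classification: braided monoidal structures on $\Vect_A$ for a finite abelian group $A$ are classified by quadratic forms $q:A\to\C^\times$ via the balancing isomorphism $\theta_{\V_{[\lambda]}}=q([\lambda])$. With the shifted Virasoro element $T^Q$ the top conformal weight of $\V_{[\lambda]}$ is $\tfrac{1}{2}(\lambda,\lambda)-(Q,\lambda)=\tfrac{1}{2}\bigl((\lambda-Q,\lambda-Q)-(Q,Q)\bigr)$, so the ribbon twist $\theta=\exp{2\pi\i L_0}$ on a top vector equals the stated $F([\lambda])$. The induced symmetric bilinear form $F([\lambda]+[\mu])/(F([\lambda])F([\mu]))=\exp{2\pi\i(\lambda,\mu)}$ then matches the HLZ double-braiding, which on top vectors reduces by analytic continuation of $\Y(\exp{\phi_\lambda},z)\exp{\phi_\mu}\sim z^{(\lambda,\mu)}$ to the same scalar; this identifies the category with $\Vect_{\Lambda^*/\Lambda}^{(\omega,\sigma)}$.

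The auxiliary statements are checked by short computations. For well-definedness, replacing $\lambda$ by $\lambda+\alpha$ with $\alpha\in\Lambda$ changes $(\lambda-Q,\lambda-Q)$ by $2(\lambda-Q,\alpha)+(\alpha,\alpha)\in 2\Z$, using $(\alpha,\alpha)\in 2\Z$ (even integrality) and $(Q,\alpha)\in\Z$ (which requires $Q\in\Lambda^*$), so $F$ descends to $\Lambda^*/\Lambda$. The cocycle twist by $\kappa$ alters the associator only within its cohomology class and leaves the quadratic form $F$ invariant. Nondegeneracy of the double-braiding is immediate: $\exp{2\pi\i(\lambda,\mu)}=1$ for all $\mu\in\Lambda^*$ forces $\lambda\in(\Lambda^*)^*=\Lambda$. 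The main obstacle, and the only step that genuinely requires calculation rather than bookkeeping, is the explicit verification that the HLZ braiding on $\V_{[\lambda]}\boxtimes\V_{[\mu]}$ really reduces on top vectors to the scalar $\exp{\pi\i(\lambda,\mu)}$; this demands careful control of the monodromy and limits of the generalized intertwining operators, where the underlying quadratic form explicitly enters.
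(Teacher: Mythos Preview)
The paper does not actually supply a proof of this theorem: it is stated as a well-known result, with the abelian classification attributed to Dong, the tensor structure to Huang--Lepowsky--Zhang, and the identification of the braiding via the quadratic form credited in a footnote to Gannon. Your sketch follows exactly this route --- Dong's classification, concrete fusion inside $\V_{\Lambda^*}$, then the Eilenberg--MacLane classification of pointed braided categories by quadratic forms, computing that form from the ribbon twist $e^{2\pi\i L_0}$ --- so your approach is the one the paper has in mind.

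One small point worth flagging: in your well-definedness check you correctly note that descent of $F$ to $\Lambda^*/\Lambda$ needs $(Q,\alpha)\in\Z$ for all $\alpha\in\Lambda$, i.e.\ $Q\in\Lambda^*$. The paper silently assumes this (and indeed the specific $Q$ used later lies in $(\Lambda^\olong)^*$), but it is not part of the hypotheses of the theorem as stated, so strictly speaking you have identified a hidden assumption rather than a gap in your own argument.
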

 Note however that the apparent braiding candidate $\sigma(\lambda,\mu)=e^{\pi\i\;(\lambda-Q,\mu-Q)-(Q,Q)}$ is not well-defined on classes and requires the choice of representatives, which causes a suitable associator $\omega$

\subsection{Screening operators}

From the vertex operator, we can produce linear endomorphisms of $\V_\Lambda$ as follows:
\begin{definition}
For a given state $a \in \V_\Lambda$ and $m \in \frac{1}{N} \mathbb{Z}$ the \emph{mode operator} 
$$\Y(a)_m : \V_\Lambda \rightarrow \V_\Lambda$$
is defined as the $z^m$-coefficient of the vertex operator 
$$\Y(a)b= \sum_m z^m\cdot{\Y(a)_mb }$$
Hence in explicit Hopf algebra terms it is
$$ \qquad b \longmapsto \sum\limits_{k \geq 0}{\left\langle a^{(2)}, b^{(2)}\right\rangle}_{-k+m} b^{(1)}\frac{1}{k!}\partial^k.a^{(1)}$$
where ${\langle a, b\rangle}_m$ denotes the $z^m$-coefficient and $a^{(1)}\otimes a^{(2)}$ the coproduct in $\V$.
\end{definition}
Moreover we can define on the level of power series
\begin{definition}
 For a given state $a \in \V_\Lambda$ and the \emph{residue operator} $\resY(a)$ is defined as
  $$
  \resY{(a)}\;b=\begin{dcases*}
		\hspace{4cm}\Y(a)_{-1} 
		& for $m\in\Z$\\
		\frac{e^{2\pi\i m}-1}{2\pi\mathrm \i}
		\sum_{k\in\Z}
		\frac{1}{m+k+1}\Y(a)_{m+k}
		& for $m\not\in\Z$              
		\end{dcases*}
		$$ 
where the residue is defined as a formal residue of fractional polynomials
$$\res{z^m}
  \;=\; 
  \frac{1}{2\pi \mathrm i}\oint_{\mathrm{S}^1} z^m\;\mathrm{d}z
  \;:=\;
  \begin{cases}
		0 & m\in\Z \backslash\{-1\}\\
		1 & m= -1\\
		\frac{1}{2\pi\mathrm i\;(m+1)}
		\left(e^{2\pi\mathrm{i}\;(m+1)}-1\right),\quad
		& m\not\in\Z \\             
		\end{cases}$$ 
For integer lattices $\resY(a)=\Y(a)_{-1}$, but for fractional lattices it can be an infinite linear combination. In this article we will only work with products of residue operators that have only finitely many nonzero terms, see Corollary \ref{cor_Weylaction}.		
\end{definition}
Geometrically the residue defined this way is the integral along the unique lift of a circle with given radius to the multivalued covering on which the polynomial with fractional exponents is defined. 

Operators defined this way are automatically defined on any vertex algebra module. In the usual integer case (i.e. $\Y(\partial\phi_\beta)v$ consist only of integer $z$-powers) the endomorphism $\resY(a)=\Y(a)_{-1}$ has several nice properties, in particular one can prove from the OPE associativity a derivational property
$$\Y(a)_{-1}\left(\Y(b)_m c\right)=\Y(\Y(a)_{-1}b)_m c\pm \Y(b)_m\left( \Y(a)_{-1}c\right)$$

\begin{example}
  Let $a=\partial\phi_\alpha$, then the residue operator $\resY(\partial\phi_\alpha)=\Y(\partial\phi_\beta)_{-1}$ is always integer (i.e. $\Y(\partial\phi_\beta)v$ consist only of integer $z$-powers) and this particular residue operator simply gives the $\Lambda$-grading
  $$\resY(\partial\phi_\alpha)\;u\exp{\phi_\lambda}=(\alpha,\lambda)\cdot u\exp{\phi_\lambda}$$
\end{example}

\noindent
The crucial definition in our efforts is:
\begin{definition}\label{def_screening} For $\alpha \in \Lambda^*$ we define the \emph{screening charge operator} $\zem_\alpha$ as
\[\zem_\alpha v := \resY(e^{{\phi}_\alpha})
\]
By definitions, in the integer case $(\alpha, \beta) \in \mathbb{Z}$ this simplifies to 
\begin{equation} \label{defscr}
\zem_\alpha ue^{{\phi}_\beta} = \sum\limits_{k \geq 0}{\langle e^{{\phi}_\alpha}, u^{(0)}\rangle}_{-k-1-(\alpha, \beta)} u^{(1)}e^{{\phi}_\beta}\frac{1}{k!}\partial^k.e^{{\phi}_\alpha}
\end{equation}
\end{definition}

\noindent
We readily observe that the screening operators shift the $\Lambda$-grading 
$$\zem_\alpha: \V_{\lambda} \rightarrow \V_{\lambda + \alpha}
\qquad \V_{\lambda}=\{u\exp{\phi_\lambda}\}$$
and is thus a linear maps between the $\V_\Lambda$-modules
$$\zem_\alpha: \V_{[\lambda]} \rightarrow \V_{[\lambda + \alpha]}$$
In particular for $\alpha \in \Lambda$ they map each module to itself.\\

The screening operators $\zem_\alpha$ are an old and very useful construction in CFT, see for example \cite{DF84,Wak86,FF,Fel89}. In the latter references they have been in fact used for $\alpha$ from a fractional lattice containing an integer lattice $\Lambda$, but here most of the usual properties do not hold on. Our new result \cite{LentQGNA} proves the long-conjectured relations of screenings:
\begin{theorem}\label{teo6.1}[\cite{LentQGNA} Thm. 6.1]
Let $\Lambda$ be a positive-definite lattice and $\left\lbrace \alpha_1, \ldots , \alpha_{n}\right\rbrace $ be a fixed base that fulfils $\left|\alpha_i\right| \leq 1 $.
Then the endomorphisms $\zem_{\alpha_i}: = \resY( e^{{\phi}_{\alpha_i}})$ on the fractional lattice VOA $\V_\Lambda$ constitute an action of the diagonal Nichols algebra generated by the $\zem_{\alpha_i}$ with braiding matrix 
\[ q_{ij} = e^{\pi i (\alpha_i, \alpha_j)}.\]
\end{theorem}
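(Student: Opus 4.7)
The plan is to prove this by a contour-integral argument, analogous to the classical rank-one Felder picture but carried out systematically for iterated residues so that the output is recognized as the quantum shuffle / Nichols algebra structure with braiding matrix $q_{ij}=e^{\pi\i(\alpha_i,\alpha_j)}$. First, I would unpack $\zem_{\alpha_i}$ as the formal contour integral $\oint \Y(e^{\phi_{\alpha_i}})(z)\,\d z$ on the multivalued covering where fractional powers of $z$ make sense; the condition $|\alpha_i|\leq 1$ is what guarantees that when we compose such operators the negative $z$-powers in each factor remain bounded, so that the infinite sum in the non-integer case of the residue formula truncates to a finite expression on each homogeneous vector (this is the content one needs for Corollary \ref{cor_Weylaction} to apply, and it is where the norm bound enters essentially).

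Next I would compute the composition $\zem_{\alpha_{i_1}}\cdots \zem_{\alpha_{i_n}}$ acting on $u\exp{\phi_\beta}$. Using the OPE between the vertex operators $\Y(e^{\phi_{\alpha_{i_k}}})(z_k)$, the $n$-fold product equals an iterated contour integral of
\[
  \prod_{k<l}(z_k-z_l)^{(\alpha_{i_k},\alpha_{i_l})}\cdot z_k^{(\alpha_{i_k},\beta)}\cdot \Y\bigl(e^{\phi_{\alpha_{i_1}+\cdots+\alpha_{i_n}}}\bigr)(z_1,\ldots,z_n)\, u\exp{\phi_\beta},
\]
interpreted on the appropriate branched cover. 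Deforming one contour past another then produces exactly the monodromy phase $e^{\pi\i(\alpha_{i_k},\alpha_{i_l})}=q_{i_k i_l}$. This is the key geometric input.

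The centerpiece is then to recognize the combinatorial structure of these iterated contour integrals as the quantum shuffle product. Concretely, I would show that the assignment $\zem_{\alpha_i}\mapsto T_{\alpha_i}$ extends to an algebra homomorphism from the free braided tensor algebra $T(V)$, where $V$ is the Yetter–Drinfeld module spanned by $\{T_{\alpha_i}\}$ with braiding $q_{ij}$, into $\End(\V_\Lambda)$; the shuffle product on integrated monomials is exactly reproduced by summing over orderings of the nested contours. From here, proving that the action descends to the Nichols algebra $\mathfrak{B}(V)$ amounts to showing that any element in the kernel of the total quantum symmetrizer $\mathfrak{S}_n$ is sent to zero under the iterated residue. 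Since the image of $\mathfrak{S}_n$ is precisely what survives after antisymmetrizing the integrand with respect to the braided $S_n$-action on $z_1,\ldots,z_n$, any symmetrizer-null element gives rise to an integrand that is exact (its residue vanishes after contour deformation, using again the norm condition to ensure no boundary contributions at the punctures).

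The main obstacle will be the last step: making rigorous the passage from "the integrand is symmetrizer-exact" to "the formal residue vanishes" in the multivalued fractional setting, since the naive Stokes-theorem argument only works for genuine analytic contour integrals. The cleanest route is to work entirely algebraically with the Hopf pairing on $\V_\Lambda$ and directly show that the composition $\resY(e^{\phi_{\alpha_{i_1}}})\cdots \resY(e^{\phi_{\alpha_{i_n}}})$, when evaluated via the explicit Hopf-algebraic formula for $\Y$, produces the quantum shuffle coefficients; then the Nichols algebra relations follow from a degeneration of the Hopf pairing on the span of the $T_{\alpha_i}$, matching exactly the construction of $\mathfrak{B}(V)$ as $T(V)$ modulo the radical of its natural bilinear form. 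With this reformulation everything reduces to identities between coproducts and pairings already in $\V_\Lambda$, which is the strategy carried out in detail in \cite{LentQGNA}.
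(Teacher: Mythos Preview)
The paper does not give its own proof of this theorem: it is stated with attribution to \cite{LentQGNA} Thm.~6.1 and immediately used, so there is nothing in this paper to compare your argument against line by line. Your proposal is in fact a faithful high-level summary of the strategy in \cite{LentQGNA}: express iterated screenings as iterated (formal) contour integrals of the multivalued product $\prod_{k<l}(z_k-z_l)^{(\alpha_{i_k},\alpha_{i_l})}$, observe that contour exchanges produce the monodromy phases $q_{ij}$, recognise the resulting combinatorics as the quantum shuffle product, and then argue that elements in the radical of the quantum symmetrizer act by zero, so the action factors through the Nichols algebra $\mathfrak B(V)$. You also correctly flag that the delicate point is making the ``symmetrizer-exact $\Rightarrow$ residue vanishes'' step rigorous in the fractional/formal setting, and that the way out is to do everything algebraically via the Hopf pairing on $\V_\Lambda$; this is precisely what \cite{LentQGNA} does.

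One correction worth making: your explanation of the role of the hypothesis $|\alpha_i|\le 1$ is not quite right. It is not primarily there to make the infinite residue sum truncate on homogeneous vectors (Corollary~\ref{cor_Weylaction} is a separate phenomenon about specific \emph{powers} of a single screening on specific modules, governed by integrality of $(\alpha_i,\lambda)$, not by $|\alpha_i|$). Rather, the norm bound controls the exponents $(\alpha_i,\alpha_j)$ appearing in the OPE factors $(z_k-z_l)^{(\alpha_i,\alpha_j)}$ so that the contour-exchange/quantum-symmetrizer computation goes through without spurious boundary contributions; when it fails (as for long roots of $B_n$ at $\ell=4$, where $(\alpha_1^\oshort,\alpha_1^\oshort)=2$) the theorem genuinely does not apply, as the paper explicitly notes in the degeneracy discussion. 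So in a written-up proof you should locate the use of $|\alpha_i|\le 1$ at the point where you bound the exponents in the multivalued integrand, not at the finiteness of the residue.
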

The theorem proves that the screening operators associated to a small enough lattice obey Nichols algebra relations, so we have an action of the Nichols algebra by screening operators. In the present context we will choose lattices such that this Nichols algebra is the positive part of the quantum group $u_q(\g)^+$.

\begin{corollary}\label{cor_NicholsRelations}
In the present article, the following two special cases will be used:
\begin{itemize}
	\item $\left(\alpha, \alpha\right)$ odd integer  $\Rightarrow (\zem_{\alpha})^2 = 0$.
	\item $\left(\alpha, \beta\right)$ even/odd integer  $\Rightarrow [\zem_{\alpha},\zem_{\beta}]_\pm=0$.
\end{itemize}
if the condition $|\alpha|,|\beta| \leq 1$ holds.
\end{corollary}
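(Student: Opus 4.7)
The plan is to invoke Theorem \ref{teo6.1} directly and then read off both asserted relations from elementary facts about rank-one and rank-two diagonal Nichols algebras; the running hypothesis $|\alpha|,|\beta|\le 1$ is exactly what puts us in the setting of that theorem, so no further residue calculation will be needed. The only data I need to extract is the braiding: Theorem \ref{teo6.1} gives $q_{\alpha\alpha}=e^{\pi i(\alpha,\alpha)}$, $q_{\alpha\beta}=e^{\pi i(\alpha,\beta)}$, and consequently $q_{\alpha\beta}q_{\beta\alpha}=e^{2\pi i(\alpha,\beta)}$.

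For the first bullet, if $(\alpha,\alpha)$ is an odd integer then $q_{\alpha\alpha}=-1$, so the rank-one Nichols subalgebra generated by $\zem_\alpha$ satisfies the defining quadratic relation $\zem_\alpha^2=0$: in the braided tensor square the symmetrizer $1+q_{\alpha\alpha}$ annihilates $\zem_\alpha\otimes\zem_\alpha$. For the second bullet, if $(\alpha,\beta)\in\Z$ then $q_{\alpha\beta}q_{\beta\alpha}=1$, which is the ``Cartan integer zero'' situation for the rank-two diagonal braiding on $\{\alpha,\beta\}$. The corresponding quantum Serre-type relation in the Nichols algebra is the $q$-commutator $\zem_\alpha\zem_\beta-q_{\alpha\beta}\,\zem_\beta\zem_\alpha=0$. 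Distinguishing by the parity of $(\alpha,\beta)$: if it is even, $q_{\alpha\beta}=+1$ and the relation reduces to the ordinary commutator $[\zem_\alpha,\zem_\beta]=0$; if it is odd, $q_{\alpha\beta}=-1$ and the relation becomes the anticommutator $\{\zem_\alpha,\zem_\beta\}=0$. This is precisely the $[\cdot,\cdot]_\pm$ vanishing asserted in the statement.

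The only (mild) point to check is that these quadratic and zero-Cartan Serre-type relations are genuinely defining relations of the diagonal Nichols algebra and not just of some larger tensor-algebra quotient; both are classical (see e.g.\ Heckenberger's classification and the Andruskiewitsch--Schneider framework), and for our two specific cases they can alternatively be verified by hand by computing the braided symmetrizers on $\zem_\alpha^{\otimes 2}$ and on $\zem_\alpha\otimes\zem_\beta-q_{\alpha\beta}\,\zem_\beta\otimes\zem_\alpha$ and noting that each is already zero in the braided tensor algebra. I do not expect any further obstacle, since the fractional nature of $\Lambda$ and the entire analytic content of the residue operators have already been absorbed into Theorem \ref{teo6.1}; the corollary is then just the specialization of that theorem to the two parity cases arising for $B_n,\ell=4$.
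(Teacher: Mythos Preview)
Your proposal is correct and follows exactly the approach the paper intends: the corollary is stated immediately after Theorem~\ref{teo6.1} with no separate proof, so it is meant to be read off from the Nichols-algebra relations for the specific braiding values $q_{\alpha\alpha}=-1$ and $q_{\alpha\beta}q_{\beta\alpha}=1$, precisely as you do. Your closing remark that the degree-two symmetrizer computation suffices (yielding $(1-q_{\alpha\beta}q_{\beta\alpha})\,x_\alpha\otimes x_\beta=0$ for the $q$-commutator and $1+q_{\alpha\alpha}=0$ for the square) is the right level of detail and makes the argument self-contained.
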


\subsection{Virasoro action}
Another well-established application of mode operators $Y(a)_m$ is the definition of an action of the Virasoro algebra on $\V_\Lambda$. Our ultimate goal in the next two sections is to choose such a Virasoro action such that a set of screening operators are Virasoro homomorphisms. 

\begin{definition} The Witt algebra is a Lie algebra generated by the vector fields \[ L_n := -z^{n+1} \frac{\partial}{\partial z}\] with $n \in \mathbb{Z}$. The Lie bracket of two vector fields is given by \[[L_m , L_n] = (m - n)L_{m+n}\]
\end{definition}
This is the Lie algebra of the group of diffeomorphisms of the circle.
\begin{definition}
The Virasoro algebra $Vir_c$ is the non-trivial central extension of the Witt Lie algebra by a central element $C$. One usually prescribes that $C$ acts by a fixed scalar $c$, the \emph{central charge}. More precisely the Virasoro algebra is generated by the operators $ L_n$ indexed by an integer  $n \in \mathbb{Z}$ that fulfil the commutation relations
\begin{align*} 
&[L_m , L_n] = (m - n)L_{m+n} + \frac{C}{12}(m^3 - m) \delta_{m+n,0} 
\qquad [L_n , C] = 0  \qquad \qquad \forall n \in \mathbb{Z}.
\end{align*}
\end{definition}

On any VOA we may try to choose an element $T \in \V_\Lambda$ (Energy-Stress-Tensor) such that the mode operators $L_n:=\Y(T)_{-2-n}$ fulfill the relations of the Virasoro algebra.
On any lattice VOA $\V_\Lambda$ we follow Feigin and Fuchs \cite{FF} and define an entire family of such structures parametrized by a parameter $Q$ as follows:
\begin{lemma}\label{lm_Q}
For any $Q\in\Lambda\otimes_\Z\C$ and any dual basis $(\alpha_i, {\alpha_j}^*) = \delta_{ij}$ of $\Lambda$ we define 
$$T := \frac{1}{2} \sum_i \partial\phi_{\alpha_i} \partial\phi_{\alpha_i*} + \sum_i Q_i \partial^2 \phi_{\alpha_i}
\quad \in \quad \V_{\Lambda}$$
Then the mode operators $L_n := \Y(T)_{-2-n}$ constitute an action of the Virasoro algebra $Vir_c$ with central charge $c = \rank - 12 (Q, Q)$ on $\V_\Lambda$ and any vertex algebra  module. 
\end{lemma}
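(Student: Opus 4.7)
My approach will be to decompose the candidate stress tensor as $T = T_0 + \partial^2\phi_Q$ where $T_0 = \tfrac{1}{2}\sum_i \partial\phi_{\alpha_i}\partial\phi_{\alpha_i^*}$ and $Q = \sum_i Q_i \alpha_i$, and verify the Virasoro commutation relations for $L_n = Y(T)_{-2-n} = L_n^0 + L_n^Q$ piece by piece. The first ingredient is the Heisenberg algebra extracted from the Hopf pairing: the modes $\tilde{a}_\alpha(n)$, defined as the $z^{-n-1}$-coefficients of $Y(\partial\phi_\alpha, z)$, should satisfy $[\tilde{a}_\alpha(m), \tilde{a}_\beta(n)] = m\,(\alpha,\beta)\,\delta_{m+n,0}$. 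This is read off from the OPE singular term $(\alpha,\beta)(z-w)^{-2}$, which in turn follows directly from $\langle \partial\phi_\alpha, \partial\phi_\beta\rangle = (\alpha,\beta) z^{-2}$ via the Hopf-algebraic vertex operator formula together with the coproducts $\Delta \partial\phi_\alpha = 1\otimes \partial\phi_\alpha + \partial\phi_\alpha \otimes 1$.

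Given the Heisenberg algebra, the classical Sugawara construction gives $L_n^0$ a Virasoro algebra with central charge $\mathrm{rank}$, and each Heisenberg current is primary of weight $1$ with respect to $T_0$, so $[L_m^0, \tilde{a}_\alpha(n)] = -n\,\tilde{a}_\alpha(m+n)$. These are classical free-boson results going back to \cite{FF}.

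For the linear-dilaton correction, the identity $Y(\partial b)_m = (m+1)\,Y(b)_{m+1}$, which follows from $Y(\partial b, z) = \partial_z Y(b, z)$, applied to $b = \partial\phi_Q$ gives $L_n^Q = Y(\partial^2\phi_Q)_{-2-n} = -(n+1)\,\tilde{a}_Q(n)$. The cross brackets then compute to $[L_m^0, L_n^Q] + [L_m^Q, L_n^0] = (n(n+1) - m(m+1))\tilde{a}_Q(m+n) = -(m-n)(m+n+1)\tilde{a}_Q(m+n) = (m-n)\,L_{m+n}^Q$, so the correction shifts the modes in a Virasoro-compatible way without producing anomalous non-central terms. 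The remaining bracket $[L_m^Q, L_n^Q] = (m+1)(n+1)\,m\,(Q,Q)\,\delta_{m+n,0} = -(Q,Q)(m^3 - m)\,\delta_{m+n,0}$ is purely central and combines with the Sugawara central term $\tfrac{\mathrm{rank}}{12}(m^3 - m)\delta_{m+n,0}$ to produce the total central charge $c = \mathrm{rank} - 12(Q,Q)$ of the statement.

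The main technical pitfall to avoid is that $\phi_Q$ itself is not an element of $\V_\Lambda$ (only $\partial^k\phi_Q$ with $k\geq 1$ are valid states), so the formula for $Y(\partial^2\phi_Q)_m$ must be derived by a single application of the derivation identity to the valid state $\partial\phi_Q$ --- yielding the factor $-(n+1)$ rather than the naive $n(n+1)$ one gets from iterating through the nonexistent $Y(\phi_Q)$. Once this is in place, all remaining work is standard mode-algebra manipulation, and basis-independence of $T$ is immediate from writing the correction as $\partial^2\phi_Q$ for the single vector $Q \in \Lambda\otimes_\Z \C$.
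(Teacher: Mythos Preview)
The paper does not prove this lemma; it merely states it as a classical construction due to Feigin--Fuchs \cite{FF} and moves on. Your argument is the standard mode-algebra verification --- Sugawara plus linear-dilaton deformation --- and it is correct, including the care you take with the derivation identity $Y(\partial b)_m = (m+1)Y(b)_{m+1}$ in the paper's (slightly nonstandard) convention that $Y(a)_m$ denotes the $z^m$-coefficient. Nothing to fix.
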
 
\begin{lemma}\label{lm_VirQ}
For $T$ defined above depending on $Q$, the action of the $L_0$ and $L_{-1}$ elements of $Vir_c$ on a general element $u e^{{\phi}_\beta}$ of the VOA $\V_\Lambda$, is given by: 
\begin{align*}
&L_{-1} u e^{{\phi}_\beta} = \partial. (u e^{{\phi}_\beta}) \\
&L_0 u e^{{\phi}_\beta} = \left( \frac{(\beta, \beta)}{2} - (\beta, Q) + \left|u\right| \right)  u e^{{\phi}_\beta}
\end{align*}	
The $L_0$ eigenvalue is called \emph{conformal dimension} or energy.
\end{lemma}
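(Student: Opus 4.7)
The proof is a direct computation from the Hopf-algebra definition of the vertex operator. As preparation I would first compute the coproduct of $T$: using that each $\partial\phi_\alpha$ is primitive and the multiplicativity of $\Delta$, one has
\[
\Delta T \;=\; 1\otimes T + T\otimes 1 + \tfrac{1}{2}\sum_i\bigl(\partial\phi_{\alpha_i}\otimes \partial\phi_{\alpha_i^*} + \partial\phi_{\alpha_i^*}\otimes \partial\phi_{\alpha_i}\bigr).
\]
Combining multiplicativity of the pairing on group-likes with its equivariance under $\partial$, I would then compute the auxiliary pairing
\[
\langle T,e^{\phi_\beta}\rangle \;=\; \left(\tfrac{(\beta,\beta)}{2}-(\beta,Q)\right)z^{-2}.
\]

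For the $L_{-1}$ identity, I would first verify it on the group-like vector $v=e^{\phi_\beta}$. Since $\Delta e^{\phi_\beta}=e^{\phi_\beta}\otimes e^{\phi_\beta}$, the defining formula reads $Y(T)(e^{\phi_\beta})=\sum_k\tfrac{z^k}{k!}\langle T^{(2)},e^{\phi_\beta}\rangle\,e^{\phi_\beta}\,\partial^k T^{(1)}$; only the cross terms of $\Delta T$ at $k=0$ contribute to the $z^{-1}$-coefficient, and the dual-basis identity $\sum_i(\alpha_i^*,\beta)\alpha_i=\beta$ collapses the resulting sum to $\partial\phi_\beta\cdot e^{\phi_\beta}=\partial.(e^{\phi_\beta})$. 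Extending to an arbitrary $ue^{\phi_\beta}$ is then immediate from the general principle that $Y(T)_{-1}$ is the translation operator on any VOA, which in the present Hopf-algebraic setup is precisely the derivation $\partial$ introduced on $\V_\Lambda$.

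For the $L_0$ identity, I would extract the $z^{-2}$-coefficient of $Y(T)(e^{\phi_\beta})$: since the cross terms in $\Delta T$ contribute only at order $z^{-1}$ and higher, only the term $\langle T,e^{\phi_\beta}\rangle\,e^{\phi_\beta}$ contributes, yielding the conformal weight $h_\beta=\tfrac{(\beta,\beta)}{2}-(\beta,Q)$ on $e^{\phi_\beta}$. To pass to an arbitrary differential polynomial $u$, I would invoke the Virasoro commutator $[L_0,L_{-1}]=L_{-1}$ from Lemma~\ref{lm_Q}, i.e.\ $[L_0,\partial]=\partial$, which ensures that each application of $\partial$ raises the $L_0$-eigenvalue by one. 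A short induction on the number of generating factors $\partial^{1+k}\phi_\alpha$ in $u$---using that $\partial$ is a Hopf-algebra derivation and that $|u|$ is additive under products---then produces the eigenvalue $h_\beta+|u|$. The main subtlety lies in this inductive step: one has to verify that products of $L_0$-eigenvectors remain $L_0$-eigenvectors whose weights add, which ultimately reflects the compatibility of the Virasoro action with the multiplicative Hopf-algebra structure of $\V_\Lambda$; this bookkeeping, while conceptually clean, is where most of the routine work sits.
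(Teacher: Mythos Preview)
The paper states this lemma without proof; it is a standard Feigin--Fuchs computation, so there is no argument in the paper to compare against. Your approach is the natural one and is correct in outline.

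Two small comments. First, in the $L_{-1}$ part, invoking ``the general principle that $Y(T)_{-1}$ is the translation operator on any VOA'' is slightly circular in this Hopf-algebraic setup, since that is precisely part of what one is checking; but the same direct computation you do for $e^{\phi_\beta}$ extends to arbitrary $ue^{\phi_\beta}$ by multiplicativity of the Hopf pairing and the fact that $\partial$ is a derivation, so no general principle is needed. Second, the subtlety you flag for $L_0$---that products of eigenvectors are eigenvectors with additive weights---is genuine, but rather than routing it through an induction on factors it is cleaner to compute $Y(T)_{-2}(ue^{\phi_\beta})$ directly: insert $\Delta(ue^{\phi_\beta})=\Delta(u)\,(e^{\phi_\beta}\otimes e^{\phi_\beta})$ and your formula for $\Delta T$, and use that the pairings $\langle\partial\phi_\gamma,\partial^{1+k}\phi_\delta\rangle$ contribute exactly the $\mathbb{N}_0$-degree count. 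This avoids having to argue separately that $L_0$ is a derivation of the commutative product.
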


\noindent
$\V_\Lambda=\V_{[0]}$ and any module $\V_{[\mu]},[\mu]\in \Lambda^*/\Lambda$ decompose as Virasoro module into it's $\Lambda$-grading layers
$$\V_{[\mu]}=\bigoplus_{\lambda\in[\mu]} \V_\lambda,\qquad \V_\lambda=\{u\exp{\phi_\lambda}\}$$
The operators $L_n$ act by $\N_0$ degree $-n$ and the unique vector of lowest degree in each $V_\lambda$ is clearly $\exp{\phi_\lambda}$. However, the Virasoro module $V_\lambda$  is neither irreducible nor generated by this lowest-degree vector (e.g. $L_{n}\exp{0}=0$ for all $n\neq 0$), rather it has a complicated structure of an indecomposable, that has been determined using screening operators. \\

\noindent
For each $\V_\Lambda$-module $\V_{[\mu]}$ the conformal dimension of any lowest-degree vectors $\exp{\phi_\lambda}$ is  
$$\frac{1}{2}(\lambda, \lambda) - (\lambda, Q) = \frac{1}{2}\Vert Q- \lambda \Vert^2 - \frac{1}{2}\Vert Q,Q\Vert^2$$
so these elements form a paraboloid with maximum in $\lambda = Q$. See below for the picture for the example $\sl_2$.

\subsection{Rescaled root lattices, short/long screenings}\label{sec_rescaledLattices}

Let $\g$ be a complex finite-dimensional semisimple Lie algebra. We denote by 
\begin{itemize} 
\item $\Lambda_R$ its root lattice with base the simple roots $\left\lbrace \alpha_1, \ldots , \alpha_{\rank}\right\rbrace $
\item $\Lambda_W$ its weight lattice with base the fundamental weights $\left\lbrace \lambda_1, \ldots , \lambda_{\rank}\right\rbrace $, 
where $(\lambda_i, \alpha_j) = d_j \delta_{ij} = \frac{1}{2}(\alpha_j, \alpha_j) \delta_{ij}$
\item ${\Lambda_R}^\vee$ its coroot lattice with base $\left\lbrace {\alpha_1}^\vee, \ldots , {\alpha_{\rank}}^\vee\right\rbrace $ where ${\alpha_i}^\vee := \alpha_i \frac{2}{(\alpha_i, \alpha_i)}$
\end{itemize}
Let $\ell\in \N$, such that $\ell$ is divisible by all $(\alpha_j, \alpha_j)$, in particular $\ell=2p$ is even.\\

We rescale the root lattice $\Lambda_R$:
\begin{definition} Define the \emph{short screening momenta} by
$$\left\lbrace {\alpha_1}^\oshort := -\alpha_1/\sqrt{p}, \; \ldots , \; {\alpha_{\rank}}^\oshort := -\alpha_{\rank}/\sqrt{p}\right\rbrace$$
This is a distinguished basis of the \emph{short screening lattice} 
$$\Lambda^\oshort := \frac{1}{\sqrt{p}}\Lambda_R$$
We choose this rescaling such that
$$q_{ij}:=e^{\pi\i({\alpha_i}^\oshort,{\alpha_j}^\oshort)}=q^{(\alpha_i,\alpha_j)}\qquad q:=e^{\frac{2\pi\i}{\ell}}$$
\end{definition}
We now consider the (generalized) lattice VOA  $\V_{\Lambda^\oshort}$. For every value of $Q$ there is a Virasoro action on the algebra defined by Lemma \ref{lm_Q}. The following choice is a slight generalization of \cite{FT} to non-simply-laced $\g$:
\begin{lemma}\label{valueQ}
The following unique $Q$ has the property that the conformal dimension of these basis elements $h^Q({\alpha_i}^\oshort)$ is equal to $1$
$$Q = (p\cdot{\rho_\g}^\vee - \rho_\g) / \sqrt{p}$$
where $\rho_\g$ is the half sum of all positive roots and ${\rho_\g}^\vee$ analogous for the dual root system. 
\end{lemma}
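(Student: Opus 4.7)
The plan is to apply the formula from Lemma \ref{lm_VirQ} directly and compare coefficients against the simple roots. Since $\exp{\phi_{\alpha_i^\oshort}}$ is of the form $u e^{\phi_\beta}$ with $u=1$ (hence $|u|=0$) and $\beta=\alpha_i^\oshort$, the conformal dimension is
$$h^Q(\alpha_i^\oshort) = \tfrac{1}{2}(\alpha_i^\oshort,\alpha_i^\oshort) - (\alpha_i^\oshort, Q).$$
Setting this equal to $1$ gives a system of linear equations in $Q$, one per simple root. Because $\{\alpha_i^\oshort\}$ is a basis of the real span of $\Lambda^\oshort$, this system has a unique solution, so the uniqueness claim is immediate once existence is established for the given $Q$.

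The next step is to rewrite the condition $h^Q(\alpha_i^\oshort)=1$ in terms of the unrescaled simple roots. Substituting $\alpha_i^\oshort=-\alpha_i/\sqrt{p}$ and writing $d_i=(\alpha_i,\alpha_i)/2$, the equation becomes
$$\frac{d_i}{p} + \frac{(\alpha_i, Q)}{\sqrt{p}} = 1,
\qquad\text{i.e.}\qquad
(\alpha_i, Q) = \sqrt{p} - \frac{d_i}{\sqrt{p}}.$$
So the problem reduces to verifying that the proposed $Q=(p\rho_\g^\vee-\rho_\g)/\sqrt{p}$ pairs with each simple root $\alpha_i$ to give exactly $\sqrt{p}-d_i/\sqrt{p}$.

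For this I would invoke the two standard Lie-theoretic identities: $(\alpha_i^\vee,\rho_\g)=1$ (since $\rho_\g$ is the sum of the fundamental weights, which are the dual basis to the simple coroots), and symmetrically $(\alpha_i,\rho_\g^\vee)=1$ (since $\rho_\g^\vee$ is the sum of the fundamental coweights). Using $\alpha_i^\vee=\alpha_i/d_i$ the first identity yields $(\alpha_i,\rho_\g)=d_i$. Therefore
$$(\alpha_i, Q) = \frac{1}{\sqrt{p}}\bigl(p\,(\alpha_i,\rho_\g^\vee) - (\alpha_i,\rho_\g)\bigr) = \frac{p - d_i}{\sqrt{p}} = \sqrt{p} - \frac{d_i}{\sqrt{p}},$$
which matches the requirement for every $i$. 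This closes the verification.

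There is no real obstacle here: the only subtlety is keeping track of the distinction between $\rho_\g$ and $\rho_\g^\vee$ in the non-simply-laced setting, which is exactly why the formula must involve the coroot sum $\rho_\g^\vee$ rather than just $\rho_\g$. In the simply-laced case $d_i=1$ and $\rho_\g^\vee=\rho_\g$, so the formula collapses to $Q=(p-1)\rho_\g/\sqrt{p}$, recovering the expression of \cite{FT}; this sanity check is what makes the general formula easy to guess and then verify as above.
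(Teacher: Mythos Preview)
Your proof is correct and follows essentially the same route as the paper: both start from the conformal-dimension formula of Lemma~\ref{lm_VirQ}, substitute $\alpha_i^\oshort=-\alpha_i/\sqrt{p}$, and reduce the verification to the standard identities $(\alpha_i,\rho_\g^\vee)=1$ and $(\alpha_i,\rho_\g)=d_i$. Your version additionally makes the uniqueness argument explicit and includes the simply-laced sanity check, but the core computation is identical.
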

We remark that $Q$ should be seen as the center point in the fundamental alcove for the Lie algebra $\g$ in characteristic $\ell$, which is in our case not a prime.
\begin{proof}
By Lemma \ref{lm_VirQ} we have to solve the system of linear equations 
\begin{displaymath} \frac{1}{2} ({\alpha_i}^\oshort, {\alpha_i}^\oshort) - ({\alpha_i}^\oshort, Q) = 1
\end{displaymath}
We easily check that the asserted $Q$ solves this system: 
\begin{align*}
&\frac{1}{2} (-\frac{\alpha_i}{\sqrt{p}}, -\frac{\alpha_i}{\sqrt{p}}) - (-\frac{\alpha_i}{\sqrt{p}}, (p\cdot{\rho_\g}^\vee - \rho_\g) \frac{1}{\sqrt{p}}) \\
&= \frac{1}{2p} (\alpha_i, \alpha_i) + \frac{1}{p} (\alpha_i, p\cdot{\rho_\g}^\vee) - \frac{1}{p} (\alpha_i,\rho_\g)  \\
&= \frac{1}{2p} (\alpha_i, \alpha_i) + (\alpha_i, {\rho_\g}^\vee) - \frac{1}{p} \frac{(\alpha_i, \alpha_i)}{2} \\
&= (\alpha_i, {\rho_\g}^\vee)  = 1
\end{align*}
where we used that ${\alpha_i}^\vee = \alpha_i  \frac{2}{(\alpha_i, \alpha_i)}$ and $(\alpha_i,\rho_\g) = \frac{(\alpha_i, \alpha_i)}{2}$ 
\end{proof}
We remark that this definition depends on the choice of a set of positive simple roots for $\g$. A different Weyl chamber of $\g$ will give rise to a different vector $Q$ (the respective reflection), which gives rise to a different Virasoro action, however of the same central charge. 

\begin{lemma}\label{alphaoplus} Fixed $Q = (p\cdot{\rho_\g}^\vee - \rho_\g) / \sqrt{p}$ as above, then there is a second set of \emph{long screening momenta}
$$\left\lbrace {\alpha_1}^\olong := +\alpha_1^\vee\sqrt{p}, \; \ldots , \; \alpha_{\rank}^\olong := +\alpha_{\rank}^\vee\sqrt{p}\right\rbrace$$
They are a distinguished basis of the \emph{long screening lattice}:
$$\Lambda^\olong:=\sqrt{p}\Lambda_{R}^\vee$$
From the condition $(\alpha_i,\alpha_i)|\ell=2p$ it follows that $\Lambda^\olong$ is an integral lattice and the dual lattice (that determine the modules of $\V_{\Lambda^\olong}$) is
$$(\Lambda^\olong)^* = \frac{1}{\sqrt{p}} \Lambda_W$$
\end{lemma}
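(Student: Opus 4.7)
The plan is to verify each of the three assertions by direct calculation, parallel to the proof of Lemma \ref{valueQ}. Although the lemma statement does not spell it out, the name ``long screening momenta'' is justified by the fact that $\exp{\phi_{\alpha_i^\olong}}$ has conformal dimension exactly $1$: using Lemma \ref{lm_VirQ} together with the standard identity $(\alpha_i^\vee, \rho_\g) = 1$ and its dual analogue $(\alpha_i^\vee, \rho_\g^\vee) = \frac{1}{2}(\alpha_i^\vee, \alpha_i^\vee) = \frac{2}{(\alpha_i, \alpha_i)}$ (obtained by applying the same identity inside the dual root system, in which $\alpha_i^\vee$ is the simple root and $\rho_\g^\vee$ the corresponding half-sum), one finds $\frac{1}{2}(\alpha_i^\olong, \alpha_i^\olong) = \frac{2p}{(\alpha_i, \alpha_i)}$ and $(\alpha_i^\olong, Q) = \frac{2p}{(\alpha_i, \alpha_i)} - 1$, whose difference is $1$. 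That $\{\alpha_i^\olong\}$ is a basis of $\sqrt{p}\Lambda_R^\vee$ is immediate from the definition.

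For the integrality of $\Lambda^\olong$, I would expand the Gram matrix on this basis as $(\alpha_i^\olong, \alpha_j^\olong) = p\,(\alpha_i^\vee, \alpha_j^\vee) = \frac{2(\alpha_i, \alpha_j)}{(\alpha_i, \alpha_i)}\cdot\frac{2p}{(\alpha_j, \alpha_j)}$. The first factor is the Cartan integer $a_{ij}\in\Z$, and the second is an integer precisely by the hypothesis $(\alpha_j, \alpha_j) \mid \ell = 2p$, so the product is integral. The diagonal entries $4p/(\alpha_i, \alpha_i)$ are moreover even, so $\Lambda^\olong$ is in fact even integral and supports the usual lattice VOA $\V_{\Lambda^\olong}$.

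For the dual lattice, the computation $(\frac{1}{\sqrt{p}}\lambda_i, \alpha_j^\olong) = (\lambda_i, \alpha_j^\vee) = \frac{2(\lambda_i, \alpha_j)}{(\alpha_j, \alpha_j)} = \delta_{ij}$ gives the inclusion $\frac{1}{\sqrt{p}}\Lambda_W \subseteq (\Lambda^\olong)^*$; conversely, any $\mu$ in the real span of $\Lambda^\olong$ expands uniquely as $\mu = \sum_i c_i\,\lambda_i/\sqrt{p}$, and integrality of the pairing $(\mu, \alpha_j^\olong) = c_j$ forces $c_j \in \Z$, which yields the reverse inclusion. The only real obstacle in the whole argument is bookkeeping the three distinct normalizations --- roots, coroots, and the rescaling by $\sqrt{p}$ --- together with the asymmetry between $\rho_\g$ and $\rho_\g^\vee$; no deep ingredient beyond these elementary root-system identities is required.
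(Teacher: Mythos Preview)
Your proof is correct and follows essentially the same route as the paper for the key computation, namely verifying via Lemma~\ref{lm_VirQ} and the identities $(\alpha_i^\vee,\rho_\g)=1$ and $(\alpha_i^\vee,\rho_\g^\vee)=\tfrac{1}{2}(\alpha_i^\vee,\alpha_i^\vee)$ that $h^Q(\alpha_i^\olong)=1$. The paper's own proof stops there; you additionally spell out the integrality of $\Lambda^\olong$ (via $a_{ij}\cdot 2p/(\alpha_j,\alpha_j)$) and the identification of the dual lattice with $\tfrac{1}{\sqrt{p}}\Lambda_W$, both of which the paper asserts but does not write out, so in that sense your argument is slightly more complete.
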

\begin{proof}
Again we compute $h^Q(\beta)$ by Lemma \ref{lm_VirQ}:
\begin{align*}
&\frac{1}{2} ({\alpha_i}^\vee \sqrt{p}, {\alpha_i}^\vee \sqrt{p}) - ({\alpha_i}^\vee \sqrt{p}, (p\cdot{\rho_\g}^\vee - \rho_\g)/\sqrt{p}) \\
&= \frac{p}{2} ({\alpha_i}^\vee, {\alpha_i}^\vee) - ({\alpha_i}^\vee, p\cdot{\rho_\g}^\vee) + ({\alpha_i}^\vee, {\rho_\g}) \\
&= \frac{p}{2} ({\alpha_i}^\vee, {\alpha_i}^\vee) - \frac{p}{2} ({\alpha_i}^\vee, {\alpha_i}^\vee) + \frac{2}{(\alpha_i, \alpha_i)} (\alpha_i, \rho_\g) = 1 
\end{align*}
\end{proof}
The inclusion relationships among these three lattices is as follow:
$$\Lambda^{\olong} \subset \Lambda^{\oshort} \subset {(\Lambda^{\olong})}^*$$
\begin{corollary}\label{formula_numberReps}
To compute the number of representations $|{(\Lambda^{\olong})}^*/\Lambda^{\olong}|$ in general it is helpful to use the intermediate lattice $\Lambda^{\oshort}$ as follows 
\begin{align*}
 |{(\Lambda^{\olong})}^*/\Lambda^{\olong}|
 &=|{(\Lambda^{\olong})}^*/\Lambda^{\oshort}|\cdot |{(\Lambda^{\oshort})}/\Lambda^{\olong}|\\
 &=|\frac{1}{\sqrt{p}} \Lambda_W / \frac{1}{\sqrt{p}}\Lambda_R |\cdot |\frac{1}{\sqrt{p}}\Lambda_R / \sqrt{p}\Lambda_R^\vee|\\
 &=|\Lambda_W / \Lambda_R|\cdot \prod_{i=1}^{\rank} \frac{\ell}{(\alpha_i,\alpha_i)}
\end{align*}
where $\Lambda_W / \Lambda_R$ is the fundamental group of the Lie algebra; the order is the determinant of the Cartan matrix. If $d=1,2,3$ is the half normsquare of a long root and,  then the number of representations is equivalently given by $(p/d)^\rank\cdot \det(\alpha_i,\alpha_j)$. 
\end{corollary}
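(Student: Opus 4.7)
The proof is essentially a routine exercise in lattice indices, so the plan is to organize it around the tower
\[
\Lambda^\olong \;\subset\; \Lambda^\oshort \;\subset\; (\Lambda^\olong)^*
\]
recorded in Lemma \ref{alphaoplus} and then compute each consecutive index separately. First I would invoke multiplicativity, $[(\Lambda^\olong)^*:\Lambda^\olong]=[(\Lambda^\olong)^*:\Lambda^\oshort]\cdot[\Lambda^\oshort:\Lambda^\olong]$, which reduces the statement to computing two indices that are each built from a single scaling.

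For the upper index, I would substitute the explicit descriptions $(\Lambda^\olong)^*=\tfrac{1}{\sqrt p}\Lambda_W$ and $\Lambda^\oshort=\tfrac{1}{\sqrt p}\Lambda_R$ supplied by the same lemma. The common factor $1/\sqrt p$ cancels and yields an isomorphism of abelian groups $(\Lambda^\olong)^*/\Lambda^\oshort\cong \Lambda_W/\Lambda_R$. This quotient is by definition the fundamental group of $\g$, whose order equals the determinant of the Cartan matrix $C=(\alpha_i,\alpha_j^\vee)$. For the lower index, I would expand each basis vector of $\Lambda^\olong$ in the basis $\{\tfrac{1}{\sqrt p}\alpha_i\}$ of $\Lambda^\oshort$:
\[
\sqrt{p}\,\alpha_i^\vee \;=\; \sqrt{p}\cdot\frac{2}{(\alpha_i,\alpha_i)}\,\alpha_i \;=\; \frac{2p}{(\alpha_i,\alpha_i)}\cdot\frac{1}{\sqrt p}\alpha_i \;=\; \frac{\ell}{(\alpha_i,\alpha_i)}\cdot\frac{1}{\sqrt p}\alpha_i .
\]
The transition matrix is diagonal, and by the divisibility hypothesis $(\alpha_i,\alpha_i)\mid \ell$ its entries are positive integers, so $[\Lambda^\oshort:\Lambda^\olong]=\prod_i \ell/(\alpha_i,\alpha_i)$. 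Combining the two indices gives the first expression $|\Lambda_W/\Lambda_R|\cdot\prod_i \ell/(\alpha_i,\alpha_i)$.

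For the alternative expression $(p/d)^{\rank}\det(\alpha_i,\alpha_j)$, the natural route is to instead compute $[(\Lambda^\olong)^*:\Lambda^\olong]$ directly as the discriminant $\det G$ of the Gram matrix of $\Lambda^\olong$. Since the basis $\sqrt p\,\alpha_i^\vee$ has Gram matrix entries $G_{ij}=p\,(\alpha_i^\vee,\alpha_j^\vee)=\tfrac{4p}{(\alpha_i,\alpha_i)(\alpha_j,\alpha_j)}(\alpha_i,\alpha_j)$, one gets
\[
\det G \;=\; (4p)^{\rank}\cdot\Bigl(\prod_i(\alpha_i,\alpha_i)\Bigr)^{-2}\cdot\det(\alpha_i,\alpha_j),
\]
and under the convention that short simple roots have half norm-square $1$ and long simple roots have half norm-square $d$ the factor $(4p)^{\rank}\prod_i(\alpha_i,\alpha_i)^{-2}$ simplifies; in particular in the cases of interest ($A_1,B_2,G_2,F_4$ and their degenerate analogues) it yields the asserted $(p/d)^{\rank}$.

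The only point that takes genuine care is bookkeeping the root-length convention (i.e.\ which simple roots are long and which are short, and the value of $(\alpha_i,\alpha_i)$ in each case), so that the product $\prod_i \ell/(\alpha_i,\alpha_i)$ matches the normalization in $(p/d)^{\rank}\det(\alpha_i,\alpha_j)$; apart from this, the statement is a direct consequence of the inclusion chain and the definition of a dual lattice.
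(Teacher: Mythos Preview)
Your argument for the main chain of equalities is correct and is exactly the computation the paper has in mind; the paper gives no separate proof beyond displaying those three lines, and your use of index multiplicativity together with the diagonal transition matrix $\sqrt{p}\,\alpha_i^\vee=\tfrac{\ell}{(\alpha_i,\alpha_i)}\cdot\tfrac{1}{\sqrt p}\alpha_i$ is precisely what underlies them.

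Your caution on the alternative expression $(p/d)^{\rank}\det(\alpha_i,\alpha_j)$ is well placed, and in fact necessary. From your Gram-matrix computation one gets
\[
|(\Lambda^\olong)^*/\Lambda^\olong|
=\det G
=\frac{p^{\rank}\det C}{\prod_i d_i},
\qquad d_i:=\tfrac12(\alpha_i,\alpha_i),
\]
whereas $(p/d)^{\rank}\det(\alpha_i,\alpha_j)=(p/d)^{\rank}\det C\cdot\prod_i d_i$. These agree iff $\bigl(\prod_i d_i\bigr)^2=d^{\rank}$, i.e.\ iff the numbers of long and short simple roots coincide. That holds in the simply-laced case, for $B_2=C_2$, $G_2$, and $F_4$, but fails for $B_n$ and $C_n$ with $n\ge 3$ (for $B_n,\ell=4$ the true value is $4$ for every $n$, while $(p/d)^n\det(\alpha_i,\alpha_j)=2^n$). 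So your hedge ``in the cases of interest'' is the honest statement; the alternative formula as written in the corollary should not be read as valid for all $\g$.
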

Of course this calculation gives actually rise to a short exact sequence of abelian groups
$$0
\longrightarrow\prod_{i=1}^{\rank} \Z_{\frac{\ell}{(\alpha_i,\alpha_i)}}
\longrightarrow {(\Lambda^{\olong})}^*/\Lambda^{\olong} 
\longrightarrow \Lambda_W / \Lambda_R
\longrightarrow 0$$
In the application this gives a decomposition of $\V_{\Lambda^\olong}\md\Mod=\Vect_{{(\Lambda^{\olong})}^*/\Lambda^{\olong} }$ into blocks that are reachable by short screenings. The actual representation-theoretic blocks in our upcoming non-semisimple category will be a subdivision of this decomposition.

\subsection{Vertex algebra implications of the rescaled lattices}

Let $\g$ be a complex semisimple Lie algebra and $\ell\in\N$ divisible by all $(\alpha_i,\alpha_j)$. We apply the lattices and bases from the previous subsection to the vertex algebra setting:\\

Let $\V_{\Lambda^\olong}$ be the lattice vertex algebra associated to the even integer lattice. We know that the simple vertex algebra modules are $\V_{[\lambda]}$ parametrized by $(\Lambda^\olong)^*/\Lambda^\olong$ with 
$$\V_{(\Lambda^\olong)^*}=\bigoplus_{[\lambda]\in (\Lambda^\olong)^*/\Lambda^\olong} \V_{[\lambda]}
\qquad\qquad (\Lambda^\olong)^*/\Lambda^\olong=\frac{1}{\sqrt{p}}\Lambda_W/\sqrt{p}\Lambda_R^\vee$$

Theorem \ref{teo6.1} applied to the short screening momenta $\alpha^\oshort=\alpha_i/\sqrt{p}$ shows:
\begin{corollary}
 Assume $p\geq(\alpha_i,\alpha_i)$ (and this fails in our specific degenerate example for long roots!) then the \emph{short screening operators} $\zem_{{\alpha_i}^\oshort}$ constitute a representation on $(\V_\Lambda^\olong)^*$ of the Nichols algebra with braiding
 $$q_{ij}:=e^{\pi\i({\alpha_i}^\oshort,{\alpha_j}^\oshort)}=q^{(\alpha_i,\alpha_j)}\qquad q:=e^{\frac{2\pi\i}{\ell}}$$
 which is precisely the positive part of the small quantum group $u_q(\g)^+$. These are hence linear maps between different $\V_{\Lambda^\olong}$-representations 
 $$\zem_{{\alpha_i}^\oshort}:\;\V_{[\lambda]}\longrightarrow \V_{[\lambda+\alpha_i^\oshort]}$$
 Together with the following exponentiated grading operators $K_i$ they constitute a representation of $u_q(\g)^{\geq 0}$:
 $$K_{\alpha_i^\oshort}:=e^{+\pi\i\;\resY(\partial\phi_{\alpha_i/\sqrt{p}})}
 \qquad K_{\alpha_i^\oshort}\;u\exp{\phi_\lambda}=e^{\frac{2\pi\i}{\ell}(\alpha_i,\lambda\sqrt{p})}\cdot u\exp{\phi_\lambda}$$
 Note that these act as single scalars on each module $\V_{[\lambda]}$.
\end{corollary}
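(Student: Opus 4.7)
The plan is to invoke Theorem \ref{teo6.1} directly for the lattice $\Lambda^\oshort=\tfrac{1}{\sqrt{p}}\Lambda_R$ with its distinguished basis $\{\alpha_i^\oshort\}$. The one hypothesis to verify is the norm bound $|\alpha_i^\oshort|\le 1$; computing $(\alpha_i^\oshort,\alpha_i^\oshort)=\tfrac{1}{p}(\alpha_i,\alpha_i)$ shows this is precisely the corollary's standing assumption $p\ge(\alpha_i,\alpha_i)$. Once it is in hand, Theorem \ref{teo6.1} delivers a diagonal Nichols algebra action of the $\zem_{\alpha_i^\oshort}$ with braiding $q_{ij}=e^{\pi\i(\alpha_i^\oshort,\alpha_j^\oshort)}=e^{\pi\i(\alpha_i,\alpha_j)/p}$; since $q=e^{2\pi\i/\ell}=e^{\pi\i/p}$ this rewrites as $q^{(\alpha_i,\alpha_j)}$, which is exactly the braiding data defining the positive part $u_q(\g)^+$ of the small quantum group.

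The module-shift claim $\zem_{\alpha_i^\oshort}\colon\V_{[\lambda]}\to\V_{[\lambda+\alpha_i^\oshort]}$ comes for free from the general fact $\zem_\alpha\colon\V_{[\lambda]}\to\V_{[\lambda+\alpha]}$ noted after Definition \ref{def_screening}, since $\alpha_i^\oshort\in\Lambda^\oshort\subset(\Lambda^\olong)^*$ guarantees the shifted coset still lies in $(\Lambda^\olong)^*/\Lambda^\olong$.

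To promote the action to the Borel $u_q(\g)^{\ge 0}$, I would compute the $K_{\alpha_i^\oshort}$-action by hand. The earlier Example records that $\resY(\partial\phi_\alpha)$ acts on $u\exp{\phi_\lambda}$ as the scalar $(\alpha,\lambda)$; exponentiating the definition gives $K_{\alpha_i^\oshort}\,u\exp{\phi_\lambda}=e^{\pi\i(\alpha_i/\sqrt p,\lambda)}\,u\exp{\phi_\lambda}$, which rearranges into the displayed formula $e^{\tfrac{2\pi\i}{\ell}(\alpha_i,\lambda\sqrt p)}$ and is in particular a single scalar on each $\V_{[\lambda]}$. The cross-relation $K_i\zem_j K_i^{-1}=q^{(\alpha_i,\alpha_j)}\zem_j$ then follows formally: $\zem_{\alpha_j^\oshort}$ shifts $\lambda$ by $\alpha_j^\oshort$, so conjugating picks up exactly the phase $e^{\pi\i(\alpha_i^\oshort,\alpha_j^\oshort)}=q^{(\alpha_i,\alpha_j)}$, while $K_iK_j=K_jK_i$ is immediate from scalarity on every graded piece.

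The genuine content sits entirely in the imported Theorem \ref{teo6.1} from \cite{LentQGNA}; granting that black box, no substantive obstacle arises here. What does merit attention, and drives the remainder of the paper, is the parenthetical warning: when $\ell$ is small relative to a long-root length the inequality $p\ge(\alpha_i,\alpha_i)$ either fails or degenerates to equality---as for the long roots in the main example $B_n,\ell=4$ with $p=2$---so that the corresponding short screenings either lie outside the scope of Theorem \ref{teo6.1} or collapse into ``integer-lattice'' long-screening behaviour. Consequently only the short-root screenings can be used to generate a genuine Nichols algebra via this route, leaving an $A_1^n$-subsystem, and the role of the missing long-root generators will be taken over by the long screenings associated to the dual algebra $C_n$.
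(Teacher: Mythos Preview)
Your proposal is correct and follows exactly the paper's approach: the corollary is stated immediately after the sentence ``Theorem \ref{teo6.1} applied to the short screening momenta $\alpha_i^\oshort=\alpha_i/\sqrt{p}$ shows:'', so the paper gives no proof beyond this one-line invocation. You have in fact spelled out more than the paper does, correctly verifying the norm hypothesis $|\alpha_i^\oshort|\le 1\Leftrightarrow p\ge(\alpha_i,\alpha_i)$, the braiding computation $q_{ij}=q^{(\alpha_i,\alpha_j)}$, the module shift, and the Borel relations via the grading example.
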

 On the other hand a well-established fact in \cite{FT} for simply-laced $\g$, and conjectured by the authors for the general case is 
\begin{corollary}
 The \emph{long screening operators} preserving each $\V_{\Lambda^\olong}$-representations 
 $$\zem_{{\alpha_i}^\olong}:\;\V_{[\lambda]}\longrightarrow \V_{[\lambda]}$$
 constitute a representation of the negative part of the dual Lie algebra $U(\g^\vee)^-$. Together with the following grading operators $H_i$ they constitute a representation of $U(\g^\vee)^{\geq 0}$.
 $$H_{\alpha_i^\olong\;d/p}:=\resY(\partial\phi_{-\alpha_i^\vee\sqrt{p}\;d/p})
 \qquad H_{\alpha_i^\olong\;d/p}\;u\exp{\phi_\lambda}=(\alpha_i^\vee,\lambda/\sqrt{p})d\cdot u\exp{\phi_\lambda}$$
 where $d\in\{1,2,3\}$ depending on the root system.
\end{corollary}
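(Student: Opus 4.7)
The strategy has three components: verifying the explicit eigenvalue formula for $H_i$, checking the Cartan commutation $[H_i,\zem_{\alpha_j^\olong}]=a_{ij}^\vee\zem_{\alpha_j^\olong}$, and establishing the Serre relations among the long screenings. The first two follow by direct computation from the Hopf-pairing definition of $\Y$; the third reduces to a standard OPE calculation because $\Lambda^\olong$ is even integral by Lemma \ref{alphaoplus}.

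\textbf{Grading operators and Cartan commutations.} The Example just preceding Definition \ref{def_screening} gives $\resY(\partial\phi_\gamma)\,u\exp{\phi_\lambda}=(\gamma,\lambda)\,u\exp{\phi_\lambda}$. Substituting $\gamma=\pm\alpha_i^\vee\sqrt{p}\,d/p$ yields the claimed eigenvalue $(\alpha_i^\vee,\lambda/\sqrt{p})d$, with the rescaling factor $d\in\{1,2,3\}$ chosen precisely so that the values on the long-screening basis $\{\alpha_j^\olong\}$ are the integer entries of the dual Cartan matrix. Since $\zem_{\alpha_j^\olong}$ shifts the $\Lambda^\olong$-grading by $\alpha_j^\olong=\sqrt{p}\alpha_j^\vee$, the commutator $[H_i,\zem_{\alpha_j^\olong}]$ on $u\exp{\phi_\lambda}$ evaluates by the scalar-difference trick to $(\alpha_i^\vee,\alpha_j^\vee)d\cdot \zem_{\alpha_j^\olong}u\exp{\phi_\lambda}=a_{ij}^\vee\cdot \zem_{\alpha_j^\olong}u\exp{\phi_\lambda}$.

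\textbf{Serre relations.} Because $\Lambda^\olong$ is even integral, each $\Y(\exp{\phi_{\alpha_i^\olong}})(z)$ involves only integer powers of $z$, and $\zem_{\alpha_i^\olong}=\Y(\exp{\phi_{\alpha_i^\olong}})_{-1}$ is a bona fide $(-1)$-mode operator. The OPE $\Y(\exp{\phi_{\alpha_i^\olong}})(z)\Y(\exp{\phi_{\alpha_j^\olong}})(w)\sim (z-w)^{(\alpha_i^\olong,\alpha_j^\olong)}\Y(\exp{\phi_{\alpha_i^\olong+\alpha_j^\olong}})(w)$ then yields: commuting modes when $(\alpha_i^\olong,\alpha_j^\olong)\geq 0$; Chevalley brackets $[\zem_{\alpha_i^\olong},\zem_{\alpha_j^\olong}]=\pm\zem_{\alpha_i^\olong+\alpha_j^\olong}$ when the pairing equals $-1$; and higher iterated brackets for more negative pairings. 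A standard induction on the depth of nested brackets then shows that the Serre element $(\mathrm{ad}\,\zem_{\alpha_i^\olong})^{1-a_{ij}^\vee}\zem_{\alpha_j^\olong}$ is indexed by a lattice vector lying outside the $\g^\vee$ root system, and therefore vanishes as a vertex operator. For simply-laced $\g$ this is carried out rigorously in \cite{FT}, yielding the presentation of $U(\g^\vee)^-$; for general $\g$ the excerpt itself flags this step as a conjecture.

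\textbf{Main obstacle.} The genuinely delicate step is precisely the Serre vanishing in the non-simply-laced case. The Nichols-algebra machinery of Theorem \ref{teo6.1} and Corollary \ref{cor_NicholsRelations} cannot be invoked here, since the long momenta satisfy $(\alpha_i^\olong,\alpha_i^\olong)=2p/d_i\geq 2$ and thereby violate the smallness hypothesis $|\alpha_i|\leq 1$. Consequently the Serre vanishing must be verified by direct combinatorial tracking of iterated normal-ordered products and residue integrals — classical in the simply-laced case, but only conjectural in general. For the application to $B_n,\ell=4$ the degeneracy highlighted in the introduction partially alleviates this: the short-screening side collapses to an $A_1^n$ subsystem, while the non-simply-laced Serre relations among the long screenings (which generate the global $C_n$-symmetry) remain the conjectural input that motivates the explicit matching with $\V_{\SF_n^{even}}$ in the later sections.
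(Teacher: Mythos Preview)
Your proposal is consistent with the paper, but it is worth noting that the paper itself does not supply a proof of this corollary at all. The text introducing the statement reads: ``a well-established fact in \cite{FT} for simply-laced $\g$, and conjectured by the authors for the general case.'' In other words, the paper treats the corollary as a citation in the simply-laced case and an open conjecture otherwise; there is no argument to compare against.

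Your sketch actually goes further than the paper does: you correctly isolate the Cartan part as an elementary consequence of the grading example, and you correctly identify that the Serre relations are the substantive content, available from \cite{FT} when $\g$ is simply-laced and conjectural in general. Your diagnosis of the obstacle --- that Theorem~\ref{teo6.1} does not apply because the long momenta have norm-square $\geq 2$ --- is exactly right and is the reason the paper declines to claim a proof. So your write-up is accurate, and if anything more informative than the paper's own treatment; just be aware that you are not reconstructing a proof that exists in the paper, but rather explaining why the simply-laced case is known and the rest is left open.
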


\noindent
We chose our Virasoro action parametrized by $Q$ such that all conformal dimensions are 
$$h^Q(\alpha_i^\oshort)=h^Q(\alpha_i^\olong)=1$$
Using the usual associativity of the OPE for the integer  operator $\zem_{\alpha^\olong}$, this implies 
\begin{corollary}
 The long screening operators $\zem_{\alpha_i^\olong}$ commute with the Virasoro action, and equivalently the energy-momentum tensor is in the kernel $\zem_{\alpha_i^\olong}(T^Q)=0$.
\end{corollary}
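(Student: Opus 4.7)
The plan is to use the derivational property of integer residue operators to reduce $[\zem_{\alpha_i^\olong}, L_n] = 0$ to the single identity $\zem_{\alpha_i^\olong}(T^Q) = 0$, and then verify the latter by a direct OPE calculation using the designed value $h^Q(\alpha_i^\olong) = 1$ from Lemma \ref{alphaoplus}.

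First I would confirm that $\zem_{\alpha_i^\olong}$ is genuinely an integer residue operator on every $\V_{\Lambda^\olong}$-module $\V_{[\lambda]}$: since $\alpha_i^\olong \in \Lambda^\olong$, the pairing $(\alpha_i^\olong, \lambda) \in \Z$ for every $\lambda \in (\Lambda^\olong)^*$, so $\Y(e^{\phi_{\alpha_i^\olong}})(z)$ has only integer $z$-powers and $\zem_{\alpha_i^\olong} = \Y(e^{\phi_{\alpha_i^\olong}})_{-1}$. The derivational property recorded earlier,
$$\Y(a)_{-1}\bigl(\Y(b)_m c\bigr) = \Y\bigl(\Y(a)_{-1} b\bigr)_m c + \Y(b)_m \Y(a)_{-1} c,$$
then applies verbatim with $a = e^{\phi_{\alpha_i^\olong}}$. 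Specialising to $b = T^Q$ and $m = -n-2$, so that $\Y(T^Q)_m = L_n$, gives
$$[\zem_{\alpha_i^\olong}, L_n] = \Y\bigl(\zem_{\alpha_i^\olong}(T^Q)\bigr)_{-n-2},$$
so commutation with the full Virasoro action is equivalent to the vanishing of $\zem_{\alpha_i^\olong}(T^Q)$.

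To finish, I would compute $\zem_{\alpha_i^\olong}(T^Q)$ directly. The vertex operator $e^{\phi_{\alpha_i^\olong}}$ is a Virasoro primary of weight $h = h^Q(\alpha_i^\olong)$: the $L_0$-eigenvalue is recorded in Lemma \ref{lm_VirQ}, and the conditions $L_n e^{\phi_{\alpha_i^\olong}} = 0$ for $n \geq 1$ are read off from the explicit formula for $T^Q$ together with the Hopf pairings $\langle \partial^{1+k}\phi_\alpha, e^{\phi_\beta}\rangle$. Invoking skew-symmetry $\Y(a, z)b = e^{z L_{-1}} \Y(b, -z) a$ and extracting the $z^{-1}$-coefficient, only two contributions survive — the one from $L_{-1} e^{\phi_{\alpha_i^\olong}} = \partial e^{\phi_{\alpha_i^\olong}}$ at $k=0$ and the one from $L_0 e^{\phi_{\alpha_i^\olong}} = h e^{\phi_{\alpha_i^\olong}}$ at $k=1$ (shifted by $L_{-1}$) — and they collapse to $(h - 1)\,\partial e^{\phi_{\alpha_i^\olong}}$. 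Since Lemma \ref{alphaoplus} sets $h = 1$, this is zero. The only genuine obstacle is the routine but non-trivial verification of the primary conditions $L_n e^{\phi_{\alpha_i^\olong}} = 0$ for $n\ge 1$ at the Hopf-pairing level, performed with the non-simply-laced $Q = (p\rho_\g^\vee - \rho_\g)/\sqrt{p}$; once this is done, the corollary is a bookkeeping consequence of the design choice $h^Q(\alpha_i^\olong) = 1$ and of the reduction to integer residue operators.
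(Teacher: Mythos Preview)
Your argument is correct and follows exactly the route the paper sketches: the paper simply notes that $h^Q(\alpha_i^\olong)=1$ and invokes ``the usual associativity of the OPE for the integer operator $\zem_{\alpha^\olong}$,'' which is precisely the derivational/commutator identity you unpack, together with the standard fact that the residue of a weight-$1$ primary kills $T$. Your write-up is a faithful expansion of that one-line justification, with the skew-symmetry computation yielding $(h-1)\partial e^{\phi_{\alpha_i^\olong}}=0$ being the concrete content behind the paper's appeal to OPE associativity.
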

We would morally expect that the same is true for the short screenings, but this is wrong because of fractionality one easily sees that $\zem_{\alpha_i^{short}}$ usually does not preserve e.g. the $\N_0$-grading and hence $L_0$. However it is proven in \cite{LentQGNA} Lemma 6.4 using Nichols algebras that:
\begin{corollary}\label{cor_Weylaction}
  On a given module $\V_{[\lambda/\sqrt{p}]},\;[\lambda/\sqrt{p}]\in (\Lambda^\olong)^*/\Lambda^\olong$ and for a given short screening $\alpha_i^\oshort=-\alpha_i/\sqrt{p}$, we take  $0\leq k < \ell/(\alpha_i,\alpha_i)$ the unique natural number such that 
  $$-(\alpha_i,\lambda)+(k-1)(\alpha_i,\alpha_i)\in \ell\Z$$
  Then the $k$-th power of the short screening operator acting on this module 
  $$(\zem_{\alpha_i^\oshort})^k:\; \V_{[\lambda/\sqrt{p}]}\rightarrow \V_{[(\lambda-k\alpha_i)/\sqrt{p}]}$$
  is a finite sum. This gives the dot-action of the Weyl group.\\
  
  It is curently ongoing work to prove in general that these operators have an OPE formula resembling the integer case, which would in particular imply they commute with the Virasoro action and with $\Y(a)$ for $a\in \Ker(\zem_{\alpha_i^\oshort})^k$. This is a generalization and conceptualization of the main idea behind the Felder complex differential \cite{Fel89}. In the present article the only nontrivial cases action is on modules where the scalar product $(\alpha_i^\oshort,\lambda/\sqrt{p})$ is still an odd integer, so we can assume these assertions as classical facts.
\end{corollary}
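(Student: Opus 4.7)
My plan is to realize $(\zem_{\alpha_i^\oshort})^k$ as an iterated formal residue of a product of vertex operators and to exploit a resonance in the fractional $z$-exponents that forces the result to collapse to a finite sum.

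First, I would unfold Definition \ref{def_screening} to present $(\zem_{\alpha_i^\oshort})^k$ applied to a typical element $u\exp{\phi_\mu}$ of $\V_{[\lambda/\sqrt p]}$ as an ordered $k$-fold formal residue of the multipoint product
$$\Y(\exp{\phi_{\alpha_i^\oshort}})(z_1)\cdots\Y(\exp{\phi_{\alpha_i^\oshort}})(z_k)\cdot u\exp{\phi_\mu}$$
on the multivalued cover where the fractional exponents $(z_j-z_l)^{(\alpha_i^\oshort,\alpha_i^\oshort)}$ and $z_j^{(\alpha_i^\oshort,\mu)}$ are single-valued. The standard vertex operator reordering identity exposes these two fractional-exponent prefactors explicitly and isolates a remaining factor in which all $z_j$-dependence appears through integer powers only.

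Second, I would translate the hypothesis $-(\alpha_i,\lambda)+(k-1)(\alpha_i,\alpha_i)\in\ell\Z$ via $\ell=2p$ and $\mu\in \lambda/\sqrt p+\Lambda^\olong$ into an integrality statement on the accumulated $z$-exponent at the outermost integration. At this resonance, after performing the inner $k-1$ integrations via the operator product expansion, the outermost residue encounters an integer $z$-exponent, so the prescription in Definition \ref{def_screening} reduces to the classical single-term $\Y(a)_{-1}$ case. A downward induction on the number of remaining nested integrations then bounds the total number of nonzero contributions and yields the claimed finite sum; simultaneously the grading shift $\mu\mapsto\mu-k\alpha_i^\oshort$ matches the Weyl reflection formula $s_{\alpha_i}\cdot\mu=s_{\alpha_i}(\mu+\rho)-\rho$ at precisely this value of $k$, recovering the asserted dot-action.

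The principal obstacle is a rigorous control of the multivalued contour topology in the genuinely fractional regime, namely showing that the infinite sums built into $\resY$ telescope to finitely many integer residues at each stage; this is the content of Lemma 6.4 of \cite{LentQGNA}, which I would cite. For the $B_n,\ell=4$ applications used throughout this article the scalar product $(\alpha_i^\oshort,\mu)$ is always an odd integer, hence both $z_j^{(\alpha_i^\oshort,\mu)}$ and $(z_j-z_l)^{(\alpha_i^\oshort,\alpha_i^\oshort)}$ reduce to integer powers and the finiteness already follows from the classical locality of vertex operators without invoking any Nichols-algebra machinery.
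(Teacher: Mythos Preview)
Your proposal aligns with the paper's treatment: the paper does not actually prove this corollary but simply cites Lemma~6.4 of \cite{LentQGNA} for the finite-sum assertion and then observes that in the $B_n,\ell=4$ applications the relevant scalar products $(\alpha_i^\oshort,\lambda/\sqrt p)$ are odd integers, so the classical integer-power theory applies. Your heuristic sketch of the resonance mechanism goes beyond what the paper offers and is reasonable, though note a minor sign slip in the grading shift (it should read $\mu\mapsto\mu+k\alpha_i^\oshort$, since $\alpha_i^\oshort=-\alpha_i/\sqrt p$).
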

For example, in the vacuum module $\V_{[0]}=\V_{\Lambda^\olong}$ we have $k=1$, so on this module all $\zem_{\alpha_i^\oshort}$ are well behaved. In particular they commute with the Virasoro action and the energy-momentum tensor is in the kernel $\zem_{\alpha_i^\oshort}(T^Q)=0$.


\section{The conjectures on the screening charge method for LCFTs} 
We now want to use the short screenings to construct a subspace $\mathcal{W}$ of the lattice VOA $\V_{\Lambda^{\olong}}$. The idea is that this realizes an interesting VOA with a non-semisimple representation theory, giving a free-field realization of a logarithmic conformal field theory. The construction of such a $\mathcal{W}$ is interesting in its own right, since few logarithmic CFTs are known: Most importantly the triplet algebra $\W_p$, which has been shown to arise from the present construction for the special case $A_1$, and the even part of $n$ pairs of symplectic fermions, which we show in this article to correspond to $B_n,\ell=2p=4$.
\begin{definition}
The subject of study is the subspace of the lattice VOA $\V_{\Lambda^\olong}$
\[\mathcal{W} :=  \bigcap\limits_i \ker_{\V_{\Lambda^\olong}} \zem_{{\alpha_i}^\oshort}.\]
\end{definition}

The following famous conjecture are in place, formulated successively by several authors such as \cite{DF84,FF,FFHST02,FGSTsl2,FT, AM08} and somewhat extended by the second author
\begin{conjecture}\label{conj_Main}~
\begin{enumerate}
	\item $\mathcal{W}$ is a vertex subalgebra of $\V_{\Lambda^\olong}$. Similarly the kernels of suitable powers $(\zem_{\alpha_i^\oshort})^k$ on each module $\V_{[\lambda]}$ should be modules of $\mathcal{W}$. 
	\item The action of $U(\g^\vee)^-$  via long screenings on $\W$ extends to an action of $\g^\vee$, and short and long screenings together give a representation of Lusztig's infinite-dimensional quantum group of divided powers $U_q^\L(\g)$, see section \ref{sec_degenerateLusztig}.  
	\item $\W$ is a logarithmic VOA  i.e. a VOA with finite non-semisimple representation theory. This would follow from the next conjecture.
	\item The representation category of $\W$ as abelian category is equivalent  to representations of the respective small quantum group $u_q(\g)$ with $q=e^{\frac{2\pi\i}{\ell}}$. Similarly, the kernel of both long and short screenings (and related, the $0$-degree $\W\cap\V_0$), should be the quantum Drinfel'd Sokolov reduction of the affine Lie algebra $\hat{\g}$, with the representation theory related to $U_q^\L(\g)$.
	\item The representation category of $\W$ as braided tensor category should be equivalent to the representations of a quasi-Hopf algebra $u_q^\omega(\g)$ with Cartan part $(\Lambda^\olong)^*/\Lambda^\olong$ as discovered in \cite{GR} for type $A_1$ and in general as described in our paper \cite{GLO17}.
\end{enumerate} 
\end{conjecture}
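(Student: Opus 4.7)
The plan is to establish Conjecture~\ref{conj_Main} in the specific case $B_n$, $\ell=4$ by identifying $\W$ with the already-known logarithmic vertex algebra $\V_{\SF_n^{even}}$ and then transferring structural results about symplectic fermions. For part (1), I would use Corollary~\ref{cor_Weylaction}: on the vacuum module all short screenings $\zem_{\alpha_i^\oshort}$ act by finite sums and satisfy a derivation-like OPE, so the intersection of their kernels is automatically a vertex subalgebra, and it contains $T^Q$ because $\zem_{\alpha_i^\oshort}(T^Q)=0$.

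The key simplification is the degeneracy specific to $B_n,\ell=4$: because $\ell=4$ equals the squared length of the long roots, the short-screening momentum $\alpha_i^\oshort=-\alpha_i/\sqrt{2}$ attached to a long simple root already has even integer norm and agrees with a long screening of the dual short coroot. The effective set of short screenings is therefore indexed only by the short simple roots of $B_n$, i.e.\ by an $A_1^n$ subsystem. Combined with the known $A_1,\ell=4$ case (where the kernel is $\V_{\SF_1^{even}}$), this identifies $(\V_{\SF_1^{even}})^{\otimes n}$ as the short-screening kernel over the sub-lattice $\sqrt{2}A_1^n\subset\Lambda^\olong$. The actual $\W$ is the extension of this sub-VOA to the larger lattice $\Lambda^\olong=\sqrt{2}C_n$, and the cosets of $C_n/A_1^n$ are exactly the data needed to extend $(\V_{\SF_1^{even}})^{\otimes n}$ to $\V_{\SF_n^{even}}$ inside $(\V_{\SF_1})^{\otimes n}$. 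I would first match the graded characters via Jacobi $\Theta$-functions as a consistency check, then promote to a genuine VOA isomorphism by choosing compatible strong generators on both sides and invoking uniqueness of simple-current extensions of $(\V_{\SF_1^{even}})^{\otimes n}$ by the relevant $\mathbb{Z}/2$.

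With $\W\cong\V_{\SF_n^{even}}$ in hand, the remaining clauses follow almost formally. Part (3) holds because symplectic fermions are known to be logarithmic. Part (2) follows because $\V_{\SF_n^{even}}$ carries a global $C_n=\mathfrak{sp}_{2n}$ action, which one identifies with the Lie algebra generated by long screenings (extending the manifest $A_1^n=\sl_2^n$-action); combining with the short screenings recovers the Lusztig divided-power algebra discussed in \cite{LentCCM}. Parts (4) and (5) follow by invoking the main results of \cite{FGR17b}, which (modulo one open conjecture stated there) identify the representation category of $\V_{\SF_n^{even}}$ with modules of a quasi-Hopf algebra $\tilde{u}$ closely related to $u_q(A_1^n)$ — precisely the quantum group predicted on the algebra side by the $B_n,\ell=4$ degeneracy.

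The main obstacle is the step that promotes the $\Theta$-function character match to a genuine vertex algebra isomorphism. Concretely, one must reconcile the $\kappa$-twisted lattice cocycle on the $B_n$ side with the fermionic sign conventions on the symplectic side, and verify that the VOA extension $(\V_{\SF_1^{even}})^{\otimes n}\subset\V_{\SF_n^{even}}$ corresponds exactly to the lattice extension $\sqrt{2}A_1^n\hookrightarrow\sqrt{2}C_n$ rather than to some twisted variant. Clause (5) additionally inherits the dependence on the open conjecture in \cite{FGR17b}, which is why the present work can only assert an equivalence as abelian categories unconditionally and a braided equivalence conditionally.
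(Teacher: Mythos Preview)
Your proposal correctly recognizes that this is a conjecture the paper does not prove in general; rather, the paper verifies it (partially and conditionally) in the specific case $B_n,\ell=4$, and your plan mirrors that strategy closely: reduce via the degeneracy to $A_1^n$ short screenings, identify $\W$ with $\V_{\SF_n^{even}}$, then import the structural results from the symplectic-fermion literature and \cite{FGR17b}.

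The one place where you diverge from the paper is the promotion of the character match to a vertex-algebra isomorphism. You propose to invoke uniqueness of simple-current extensions of $(\V_{\SF_1^{even}})^{\otimes n}$. The paper instead argues directly: it writes down an explicit map on generators for $A_1$, namely $\psi\mapsto e^{\phi_{-\alpha/\sqrt{2}}}$ and $\psi^*\mapsto \partial e^{\phi_{+\alpha/\sqrt{2}}}$, checks the defining OPE and mode-operator anticommutator by hand, obtains injectivity from irreducibility of the vacuum module and surjectivity from the graded-dimension match, then tensors $n$ copies to get $\V_{\SF_n}$ as the screening kernel over $\frac{1}{\sqrt{2}}A_1^n$, and finally observes that the even part is characterized purely by its lattice support, which is the ``chess-board'' sublattice $D_n=\sqrt{2}C_n=\Lambda^\olong_{B_n,\ell=4}$. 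This sidesteps entirely the cocycle-matching and simple-current uniqueness issues you flag as obstacles: no abstract classification is needed because the even part is cut out by a lattice condition already visible in the free-field realization. Your route would also work but requires more machinery; the paper's is more elementary and self-contained.

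For clauses (2), (4), (5) you are aligned with the paper: the $C_n$-action is identified with long screenings, and the categorical statements are imported from \cite{FGR17b} conditionally on Conjecture~\ref{con_SFIngo}. The paper is explicit that this remains conditional, so your caveat there is exactly right.
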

\begin{remark}
 We remark on each of these conjectures:
\begin{enumerate}
	\item Both of these assertions should be proven by using Corollary \ref{cor_Weylaction} these powers are well-behaved and have an OPE associativity formula similar to the case of usual (integer-power) vertex operators.  
	\item Presumably the action of the other half of $U_q(\g)$ can be constructed by regularizing the vanishing short screening powers $$\frac{(\zem_{-\alpha_i/\sqrt{p}})^{\ell/(\alpha_i,\alpha_i)}}{[\ell/(\alpha_i,\alpha_i)]_q!}$$
	Here again one has to prove that regularized short screening powers have an OPE associativity formula similar to the case of usual (integer-power) vertex operators, then the relations of $\g$ follow quite straight-forward.  
	\item This would follow from the next conjecture.
	\item The category equivalence is by all means the hardest and most interesting conjecture. One possible proof strategy is to construct a projective bimodule with commuting action of $\W$ and $u_q(\g)$ as in \cite{FGSTsl2} for $A_1$. In the general case one might want to obtain the bimodule by adding all Weyl reflections of $\V_{[0]}$ and regularizing them on the kernel of the screening to obtain logarithmic module extensions. Alternatively one might directly construct a categorical Weyl group action on the representation category, which essentially means to retake the steps in \cite{AJS94}. Hence the resulting equivalence can be understood as a CFT variant of Kazhdan-Lusztig correspondence.   
\end{enumerate} 
\end{remark}

\section{Known example $A_1,\ell = 4$}

In the next sections we determine the decomposition behaviour of the simple lattice VOA modules $\V_{[\lambda]}$, when restricted to the kernel of the short screenings $\mathcal{W}\subset\V_{\Lambda^\olong}$. We first do the known example $A_1,\ell=4$, which is the triplet algebra $\W_2$ and at the same time the even part of a single pair of symplectic fermions. Since $B_1=A_1$ this is also the smallest case of our family of examples $B_n,\ell=4$ which we treat subsequently.

\subsection{Lattices}

Let us apply what we have described to the easiest example $\g = \sl_2$ with root system $A_1$, which is treated in \cite{FFHST02,FGSTsl2,TW,TN}:

The root lattice is $\Lambda_R = \alpha \mathbb{Z}$ with $(\alpha, \alpha) = 2$. Let $\ell = 2p$ be an arbitrary even positive number. We compute all the data of the last two sections:

\begin{itemize}
	\item The \emph{short screening momentum} spanning the short screening lattice is  
	$$\alpha^\oshort=-\frac{\alpha}{\sqrt{p}}
	\qquad\quad {\Lambda}^\oshort = \frac{1}{\sqrt{p}}\Lambda_R = \frac{\alpha}{\sqrt{p}}\mathbb{Z}=\frac{2}{p}\Z$$
	\item  To compute $Q$ as in Lemma \ref{valueQ} we notice that $\alpha^\vee = \alpha$ and 
		\begin{displaymath}
			\rho_\g = {\rho_\g}^\vee = \frac{1}{2} \alpha
		\end{displaymath}
		so we easily arrive to
		\begin{displaymath}  
			Q = \frac{1}{\sqrt{p}}(p {\rho_\g}^\vee - \rho_\g) = \frac{1}{2\sqrt{p}} (p\cdot \alpha - \alpha) = \frac{1}{2}\frac{(p - 1)}{\sqrt{p}} \alpha = - \frac{p-1}{2} {\alpha}^\oshort
		\end{displaymath}

	\item The \emph{long screening momentum} spanning the long screening lattice is  
	$$\alpha^\olong=+\alpha \sqrt{p}
	\qquad\quad {\Lambda}^\olong = \sqrt{p}\Lambda_R^\vee = \alpha\sqrt{p}\mathbb{Z}= 2p\Z$$
	The dual lattice determining the $\V_{\Lambda^\olong}$-modules is spanned by the rescaled fundamental weight
	$$\lambda/\sqrt{p}=\alpha/2\sqrt{p}  
	\qquad\quad (\Lambda^\olong)^*= \frac{1}{\sqrt{p}} \Lambda_W = \frac{\alpha}{2\sqrt{p}} \mathbb{Z}=\frac{1}{2p}\Z$$
	Now we can compute the number of representations of $\V_{\Lambda^\olong}$ to be 
		\begin{align*} \left| (\Lambda^\olong)^* / \Lambda^\olong\right| &= 
		\left| (\Lambda^\olong)^* / \Lambda^\oshort\right| \cdot \left| \Lambda^\oshort / \Lambda^\olong\right|  = \\
		&= \left|\frac{1}{\sqrt{p}} \Lambda_W  / \frac{1}{\sqrt{p}} \Lambda_R \right| \cdot \left|\frac{1}{\sqrt{p}} \Lambda_R / \sqrt{p} {\Lambda_R}^\vee\right|  =\\
		&=  2\cdot p 			
		\end{align*}

	\item Explicitly these $2p$ representations are given by the $\Lambda^\olong$-cosets with representatives: 
		\begin{displaymath}
			(\Lambda^\olong)^* / \Lambda^\olong = \left\{ [k \alpha/2\sqrt{p}] : k = 0,1,\ldots, 2p-1\right\}
		\end{displaymath}
	that fall into two $\Lambda^\oshort$-cosets
	$$0 + \Lambda^\oshort / \Lambda^\olong = \{ k \alpha/2\sqrt{p}, \mbox{k even}\} 
	\qquad \frac{\alpha}{2\sqrt{p}} + \Lambda^\oshort / \Lambda^\olong = \{ k \alpha / 2 \sqrt{p}, \mbox{k odd}\}$$
	The representations of $\V_{\Lambda^\olong}=\V_{\sqrt{2p}\Z}$ are then $\V_{[\frac{k \alpha}{2 \sqrt{p}}]}$, with $k = 0,1,\ldots,2p-1$.
\end{itemize}

From now on we always set $\ell=2p=4$. We draw a picture of all elements in all simple modules  
	$$\V_{(\Lambda^{\olong})^*}=\{u\exp{\phi_{k \alpha / 2 \sqrt{p}}}\}=\bigoplus_{k\in\Z}\V_{k\alpha / 2 \sqrt{p}}$$
	with the $\Lambda$-grading by $k$ on the X-axis and the conformal dimension on the Y-axis. At the top of each pyramid is a pure exponential $\exp{\phi_{\lambda}},\;\lambda=k \alpha / 2 \sqrt{p}$ as a larger dot, one level below we the one basis element $\partial\phi \exp{\phi_{\lambda}}$, another level below the two basis elements 
	$\partial\phi\partial\phi \exp{\phi_{\lambda}},\partial^2\phi \exp{\phi_{\lambda}}$, then three basis elements, then five basis elements etc.\\
	
	By $k\;\mod\;4$ the elements are grouped into the $2\cdot 2$ many $\V_{\Lambda^\olong}$-modules 
	$$\V_{(\Lambda\olong)^*}=\V_{[0]}\oplus\V_{[\alpha / 2 \sqrt{p}]}\oplus\V_{[2\alpha / 2 \sqrt{p}]}\oplus\V_{[3\alpha / 2 \sqrt{p}]}$$
	and we draw the vacuum module $\V_{[0]}=\V_{\Lambda^\olong}$ with solid boundary, the module $\V_{[2\alpha / 2 \sqrt{p}]}=\V_{[\alpha / \sqrt{p}]}$ without boundary; the two modules in the other  $\Lambda^\oshort$-coset are not shown, only the first two top exponential $k=-1,+1,+3$ in grey.
	\begin{center}
	 \includegraphics[scale=.3]{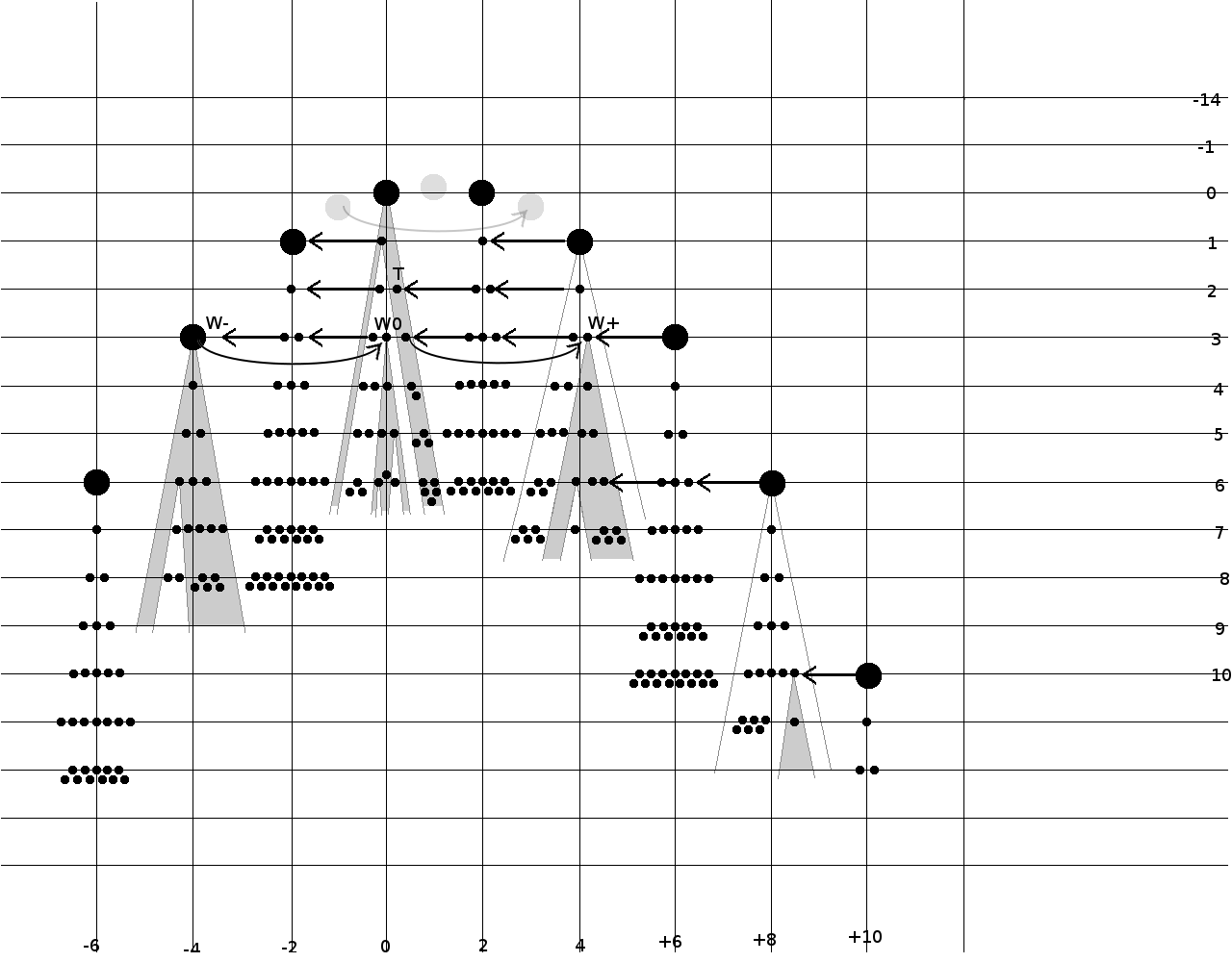}
	\end{center}
	We have also drawn some short screenings (right-to-left) and long screenings (left-to-right) and for the purpose of the next section we have drawn in shaded grey the kernel of the short screening in the vacuum module  $\V_{[0]}=\V_{\Lambda^\olong}$, including the energy-stress-tensor $T$.


\subsection{Groundstates}

\begin{definition}
We define the \emph{groundstates} of a vertex algebra module as the vector subspace of lowest conformal dimension. In particular for a lattice VOA $\V_\Lambda$ by Lemma \ref{lm_VirQ} the groundstates of a module $\V_{[\lambda]}$ is spanned by the elements $e^{\phi_\lambda}$ elements for those representatives $\lambda$ with minimal distance to the vector $Q$ defining the Virasoro action. For $A_1,\ell=4$ we give informal names to them:
\emph{Blue} (or \emph{Vacuum}), \emph{Center}, \emph{Green} and \emph{Steinberg} module.

\end{definition}
In the following table we will write in the case $A_1,\ell=4$ for each module the dimension of the groundstates, the respective conformal dimension and explicitly the elements of the groundstates. 
\[
\begin{array}{ll|lll}
  && \#\mbox{Groundstates} & \mbox{Conformal dimension} & \mbox{Groundstates } e^{\phi_\lambda} \\
\hline
\text{Blue} & \V_{[0]}  & 1 & 0 &  0\\
\text{Center} &\V_{[1]} & 1 & -1/8 & \alpha/2\sqrt{2}=Q\\
\text{Green} &\V_{[2]} & 1 & 0 & \alpha/\sqrt{2}\\
\text{Steinberg} &\V_{[3]} & 2 & 3/8 & -\alpha/2\sqrt{2},\; 3\alpha/2\sqrt{2}
\end{array}
\]

We now focus on the screening operators and on their action. From Corollary \ref{cor_Weylaction} we know that the \emph{long screening operators} $\zem_{\alpha_i^\olong}$ and some power of the \emph{short screening operators} $(\zem_{\alpha_i^\oshort})^k$ are \emph{Vir}-homomorphisms.
In particular on the Blue and Green modules this power is $k=1$, while on the Center module the power is $k=0$ (so just the identity) and on the Steinberg module the power is $k=2$ (so identically zero by the Nichols algebra relation in Corollary \ref{cor_NicholsRelations}), i.e. on them the \emph{short screening} operators are really \emph{Vir}-homomorphisms.

\begin{remark}
For $A_1$ with general $p$ the dimension of the groundstates is again two in the facet module $\V_{[-1]} = \V_{[2p-1]}$ module and is one in all the other $2p-1$ modules. The Weyl group action  $(\zem_{\alpha_i^\oshort})^k$ around $Q$ connects $\V_{p-1-k}\leftrightarrow \V_{p-1+k}$, it is the identity on the Center module $\V_{p-1}$ and identical zero on the Steinberg module. 

The groundstates are a fundamental domain (or alcove) for $\Lambda^\olong$-coset representatives and $\V_{[-1]} = \V_{[2p-1]}$ is on the boundary (facet). This picture continues to hold for arbitrary $\g$, where we have regular weights with a single groundstate i.e. representative inside the fundamental alcove, and singular weights at the facets.\\

\end{remark}

\subsection{Screening operators}

In this setting and the explicit expression for the screening operator \ref{def_screening} we can explicitly apply the short and long screening operators $\zem_{-\alpha/\sqrt{2}}$ and $\zem_{\alpha \sqrt{2}}$ to our modules to see how they act. Since in this specific example on the modules $[0], [\frac{2\alpha}{2\sqrt{2}}]$ integrality still holds $(-\frac{\alpha}{\sqrt{2}}, \lambda) \in \mathbb{Z}$. We start with some examples for the short screening operator:
\begin{enumerate}
	\item The short screening operator applied to the \emph{Blue} groundstates element $e^0=1$ (the vacuum) with conformal dimension $0$ give: \\
				$\zem_{-\alpha/\sqrt{2}}(e^0)  = 0 $
	\item The short screening operator applied to the second layer with conformal dimension $1$ of the \emph{Blue} module give:
	\begin{align*}
	 \zem_{-\alpha/\sqrt{2}}(\partial \phi_{\alpha/\sqrt{2}}) 
	 &=   \sum\limits_{k} \left\langle e^{{\phi}_{-\frac{\alpha}{\sqrt{2}}}}, \partial \phi_{\alpha/\sqrt{2}} \right\rangle_{-k-1} \frac{1}{k!} \partial^k e^{{\phi}_{-\frac{\alpha}{\sqrt{2}}}}  \\
	 &+\sum\limits_{k} \left\langle e^{{\phi}_{-\frac{\alpha}{\sqrt{2}}}}, 1 \right\rangle_{-k-1} \partial \phi_{\alpha/\sqrt{2}} \frac{1}{k!} \partial^k e^{{\phi}_{-\frac{\alpha}{\sqrt{2}}}}\\
	 &=\left\langle e^{{\phi}_{-\frac{\alpha}{\sqrt{2}}}}, \phi_{\alpha/\sqrt{2}} \right\rangle_{-1} \partial  e^{{\phi}_{-\frac{\alpha}{\sqrt{2}}}}+0\\
	 &=-(\frac{\alpha}{\sqrt{2}}, -\frac{\alpha}{\sqrt{2}}) e^{{\phi}_{-\frac{\alpha}{\sqrt{2}}}} = e^{{\phi}_{-\frac{\alpha}{\sqrt{2}}}}\\ 
	\zem_{-\alpha/\sqrt{2}}(e^{\phi_{\alpha\sqrt{2}}}) 
	&= \sum\limits_{k} \left\langle e^{{\phi}_{-\frac{\alpha}{\sqrt{2}}}}, 1 \right\rangle_{-k-1 +2} e^{\phi_{\alpha\sqrt{2}}} \frac{1}{k!} \partial^k e^{{\phi}_{-\frac{\alpha}{\sqrt{2}}}} \\
	&= \partial \phi_{-\alpha/\sqrt{2}} e^{{\phi}_{\frac{\alpha}{\sqrt{2}}}}\\
	\end{align*}

	\item The short screening operator applied to the \emph{Green} groundstates element with conformal dimension $0$ gives: 
	\begin{align*}
	 \zem_{-\alpha/\sqrt{2}}(e^{{\phi}_{\frac{\alpha}{\sqrt{2}}}}) 
	 &=  \sum\limits_{k} \left\langle e^{{\phi}_{-\frac{\alpha}{\sqrt{2}}}}, 1 \right\rangle_{-k-1+1} e^{{\phi}_{\frac{\alpha}{\sqrt{2}}}} \frac{1}{k!} \partial^k e^{{\phi}_{-\frac{\alpha}{\sqrt{2}}}} \\
	 &=\left\langle e^{{\phi}_{-\frac{\alpha}{\sqrt{2}}}}, 1 \right\rangle_{0} e^{{\phi}_{\frac{\alpha}{\sqrt{2}}}}  e^{{\phi}_{-\frac{\alpha}{\sqrt{2}}}} 
	 \;=1
	\end{align*}
	\item The short screening operator applied to the second layer with conformal dimension $1$ of the \emph{Green} module give
	\begin{align*}
	 \zem_{-\alpha/\sqrt{2}}(\partial \phi_{\alpha/\sqrt{2}} e^{\phi_{\frac{\alpha}{\sqrt{2}}}}) 
	 &= \zem_{-\alpha/\sqrt{2}}(L_{-1}(e^{{\phi}_{\frac{\alpha}{\sqrt{2}}}}))\\ 
	 &= L_{-1}( \zem_{-\alpha/\sqrt{2}}(e^{{\phi}_{\frac{\alpha}{\sqrt{2}}}})) = L_{-1} (1) \;= 0\\
	 \zem_{-\alpha/\sqrt{2}}(e^{{\phi}_{-\frac{\alpha}{\sqrt{2}}}}) 
	 &=  \sum\limits_{k} \left\langle e^{{\phi}_{-\frac{\alpha}{\sqrt{2}}}}, 1 \right\rangle_{-k-1-1} e^{{\phi}_{-\frac{\alpha}{\sqrt{2}}}} \frac{1}{k!} \partial^k e^{{\phi}_{-\frac{\alpha}{\sqrt{2}}}} \; =0
	\end{align*}
	where we abbreviated the first calculation using the commutativity with the Virasoro action.
\end{enumerate}

If we apply the long screening operators to these elements and find in this example the conjectured action of $\sl_2$ on the kernel of short screenings
$$ \zem_{\alpha\sqrt{2}}(e^0) 
  = 0$$
$$
 \zem_{\alpha\sqrt{2}}(e^{{\phi}_{-\frac{\alpha}{\sqrt{2}}}}) 
 = \partial \phi_{\alpha \sqrt{2}}e^{{\phi}_{\frac{\alpha}{\sqrt{2}}}}
 \qquad\qquad \qquad\qquad  
 \zem_{\alpha\sqrt{2}}(\partial \phi_{\alpha/\sqrt{2}} e^{\phi_{\frac{\alpha}{\sqrt{2}}}}) 
 = 0$$
 $$ \zem_{\alpha\sqrt{2}}(T)=0$$
and in degree $3$ the adjoint representation $W^-,W^0,W^+$. \\

In contrast, applying a single short screening 
to the groundstate of the Center module shows the effect of fractionality if the prerequisite of Corollary \ref{cor_Weylaction} does not hold:
\begin{align*}
 \zem_{-\alpha/\sqrt{2}}(e^{\phi_{\alpha/2\sqrt{2}}})
 &=\res{\Y(\exp{\phi_{-\alpha/\sqrt{2}}})\exp{\phi_{\alpha/2\sqrt{2}}}}\\
 &=\sum_{k\geq 0}\sum_m \res{z^{k+m}}\cdot \langle\exp{\phi_{-\alpha/\sqrt{2}}},\exp{\phi_{\alpha/2\sqrt{2}}}\rangle_m 
 \cdot \exp{\phi_{\alpha/2\sqrt{2}}}\frac{\partial^k}{k!}\exp{\phi_{-\alpha/\sqrt{2}}}\\
 &= \sum_{k\geq 0} 
     \frac{e^{2\pi\i(k-\frac{1}{2}+1)}-1}{2\pi\i(k-\frac{1}{2}+1)}
     \cdot \exp{\phi_{\alpha/2\sqrt{2}}}\frac{\partial^k}{k!}\exp{\phi_{-\alpha/\sqrt{2}}}
\end{align*}
This is an infinite sum that does not even preserve the grading $L_0$.

\subsection{Restricting lattice modules to $\mathcal{W}$}
We now look at the subspace $\mathcal{W}$ and at its representations. In the case $A_1$ it is known to be the $\mathcal{W}_{p}$-triplet algebra generated by the elements shown in the picture, which form the three-dimensional adjoint representation under the action of $\g^\vee=\sl_2$ via long screenings: 
$$\mathcal{W}^-:=\exp{\phi_{-\alpha}},
\qquad \mathcal{W}^0=\zem_{\alpha\sqrt{p}}\exp{\phi_{-\alpha}},
\qquad \mathcal{W}^+=(\zem_{\alpha\sqrt{p}})^2\exp{\phi_{-\alpha}}$$ 

For $A_1$ there is only a single short screening $\zem_{-\alpha/\sqrt{2}}$, so we have only one short screening operator  and so the definition become: 
\begin{align*}
\mathcal{W} &=  \ker_{\V_{\Lambda^\olong}} \zem_{-\alpha/\sqrt{2}}
\end{align*}
The first few layers of $\W$ are (compare the calculations and the picture) 
$$\langle\exp{0}\rangle_\C
\;\oplus\; 0
\;\oplus\; \langle T^Q \rangle_\C 
\;\oplus\; \langle W^-, W^0,\partial T^Q, W^+\rangle_\C 
\;\oplus\;\cdots$$

We now restrict the modules of $\V_{\Lambda^\olong}$ to $\mathcal{W}$-modules and describe their (non-semisimple) decomposition behaviour into irreducibles: By Corollary \ref{cor_Weylaction} we have Virasoro homomorphism (Weyl group action). 
$$ {(\zem_{-\alpha/\sqrt{2}})}^k: \V_{[p-1 + k]} \longrightarrow \V_{[p-1 - k]}$$

These Virasoro homomorphisms commute by OPE associativity with $Y(a)$ with $a\in\Ker_{\V^\olong}(\zem_{-\alpha/\sqrt{2}})$, so they are vertex module homomorphisms for the restriction to $\W$. Thus their kernels and images are $\mathcal{W}$-submodules:

\begin{itemize}
	\item For the Center module $\V_{[1]}$ we have $k=0$ and the Virasoro homomorphism is thus just the identity, the kernel is trivial and the image is all. This module stay irreducible over $\mathcal{W}$-action and we call it 
	$$\V_{[1]}\cong \Lambda(2)$$ 
	\item For the Steinberg module $\V_{[3]}=\V_{[-1]}$  we have $k=2$ and  ${(\zem_{-\frac{\alpha}{\sqrt{2}}})}^2 = 0$, so the image is trivial and the kernel is all. Again the module stays irreducible and we call it 
	$$\V_{[3]}=\Pi(2)$$
	\item For the Blue and Green modules, respectively $\V_{[0]}$ and $\V_{[2]}$, we have $k=1$ and thus a single short screening $(\zem_{-\frac{\alpha}{\sqrt{2}}})$. The modules  decompose them into non-trivial kernel and image. We will call them $\Lambda(1)$ and $\Pi(1)$ and explicitly we have: 
	\begin{align*}
\W=\Lambda(1) \rightarrow &\V_{[0]} \rightarrow \Pi(1)\\
\Pi(1) \rightarrow &\V_{[2]} \rightarrow \Lambda(1)
\end{align*}
\end{itemize}

We remark that the letters were chosen by Semikhatov to visualize that $\Lambda(1),\Lambda(2)$ have a 1-dimensional groundstates $\exp{0}$ resp. $\exp{\phi_{\alpha/2\sqrt{2}}}$ and $\Pi(1), \Pi(2)$ have 2-dimensional groundstates $e^{\phi_{-\alpha/\sqrt{2}}},\partial e^{\phi_{+\alpha/\sqrt{2}}}$ resp.  $\exp{\phi_{-\alpha/2\sqrt{2}}},\exp{\phi_{3\alpha/2\sqrt{2}}}$. 

\begin{remark} We remark that this argument neither proves that $\Lambda(i), \Pi(i)$ are irreducible representations nor that these are all the irreducible representations of $\mathcal{W}$. However in the $A_1$ case it is proven, e.g. in \cite{AM08}.
\end{remark}

\section{Example $B_2,\ell = 4$}\label{B_2} 

The case $A_1$, in particular $\ell=2p=4$, is well known with the kernel of short screenings $\W=\W_p$ the triplet algebra. In the next section we discuss the concepts introduced above thoroughly in this the new example $B_n,\;\ell = 4$. We will ultimately prove that $\W$ is isomorphic as a vertex algebra to the even part of the vertex algebra of $n$ pairs of symplectic fermions. To understand the general instance we will start with the $B_2$ case.

\subsection{Lattices}
Let $\g=\so_5$ be a Lie algebra with root system $B_2$, with a set of positive simple roots $\{\alpha_1, \alpha_2\}$ and Killing form: 
$$(\alpha_i,\alpha_j)=\begin{pmatrix}
   4 & -2 \\
   -2 & 2
  \end{pmatrix}$$
so $\alpha_1$ is the long root and $\alpha_2$ the short root.
The coroots are ${\alpha_1}^\vee = \alpha_1/2,\;{\alpha_2}^\vee= \alpha_2$ and span a root system of type $C_2$. The sets of positive roots and positive coroots are 
\begin{align*}
&\Phi^+(B_2) = \{\alpha_1,\; \alpha_2,\; \alpha_1 + \alpha_2,\; \alpha_1 + 2\alpha_2 \} \\ 
&\Phi^+({B_2}^\vee) = \{{\alpha_1}^\vee,\; {\alpha_2}^\vee, \;{\alpha_1}^\vee + {\alpha_2}^\vee, \; 2{\alpha_1}^\vee + {\alpha_2}^\vee \}
\end{align*} 
Therefore
\begin{align*}
&\rho_\g = 3/2 \; \alpha_1 + 2 \alpha_2 \\
&{\rho_\g}^\vee = 2 {\alpha_1}^\vee + 3/2 \; {\alpha_2}^\vee = \alpha_1 + 3/2 \; \alpha_2
\end{align*}
The value of $Q$ and the central charge of the respective Virasoro action are 
\begin{align*}
 Q 
 &= \frac{1}{\sqrt{p}} (p {\rho_\g}^\vee - \rho_\g)  = \frac{1}{\sqrt{2}}(2 \alpha_1 + 2 \frac{3}{2} \alpha_2 - \frac{3}{2} \alpha_1 - 2 \alpha_2) = \frac{\alpha_1}{2\sqrt{2}} + \frac{\alpha_2}{\sqrt{2}} \\
 c
 &=\rank-12(Q,Q)=2-6=-4
\end{align*}
 The short screening lattice with its base is in this special case an odd integral lattice
	\begin{displaymath} 
	\Lambda^\oshort = \frac{1}{\sqrt{p}} \Lambda_R , 
	\qquad \{\alpha_1^{\oshort},\alpha_2^\oshort \}=\{-\frac{1}{\sqrt{2}}\alpha_1, -\frac{1}{\sqrt{2}} \alpha_2 \}
	\end{displaymath} 
The long screening lattice with its base is the even integral lattice 
	\begin{displaymath} 
	\Lambda^\olong = \sqrt{p} {\Lambda_R}^\vee , \qquad 
	\{\alpha_1^{\olong},\alpha_2^\olong \}=\left\{\sqrt{2} {\alpha_1}^\vee, \sqrt{2} {\alpha_2}^\vee\right\} 
	\end{displaymath} 
The dual of the  long screening lattice $(\Lambda^\olong)^*$ with its base is given by fundamental weights
	\begin{displaymath} 
	\hspace{2cm}
	(\Lambda^\olong)^* = \frac{1}{\sqrt{2}} \Lambda_W \qquad \{{\lambda_1}/\sqrt{2},
	 {\lambda_2}/\sqrt{2}\} = \{\frac{\alpha_1}{\sqrt{2}} + \frac{\alpha_2}{\sqrt{2}}, \qquad
	 \frac{\alpha_1}{2\sqrt{2}} + \frac{\alpha_2}{\sqrt{2}}\}
	\end{displaymath} 
	${\lambda_2}/\sqrt{2} = Q$ and will be the groundstates representative of one of the modules $\V_{[{\lambda_2}/\sqrt{2}]}$. 
We compute the number of representations of $\V_\Lambda^\olong$ using formula (\ref{formula_numberReps}):
\begin{align*}
 \left| (\Lambda^\olong)^* / \Lambda^\olong\right|
 &=|{(\Lambda^{\olong})}^*/\Lambda^{\oshort}|\cdot |{(\Lambda^{\oshort})}/\Lambda^{\olong}|\\
 &=|\frac{1}{\sqrt{2}} \Lambda_W / \frac{1}{\sqrt{2}}\Lambda_R |\cdot |\frac{1}{\sqrt{2}}\Lambda_R / \sqrt{2}\Lambda_R^\vee|\\
 &=|\Lambda_W / \Lambda_R|\cdot \prod_{i=1}^{\rank} \frac{\ell}{(\alpha_i,\alpha_i)}
 =2\cdot 2
\end{align*}
The $\V_{\Lambda^\olong}$-modules $\V_{[\lambda]}$ are given by the cosets in the quotient $(\Lambda^\olong)^* / \Lambda^\olong$:
\begin{displaymath}
[0] \qquad [{\lambda_2}/\sqrt{2}] \qquad [{\lambda_1}/\sqrt{2}] \qquad [{\lambda_1}/\sqrt{2} + {\lambda_2}/\sqrt{2}]
\end{displaymath}
As in $A_1=B_1$ we call these modules \emph{Blue} $\V_{[0]}$, \emph{Center} $\V_{[{\lambda_2}/\sqrt{2}]}$, \emph{Green} $\V_{[{\lambda_1}/\sqrt{2}]}$ and \emph{Steinberg} $\V_{[{\lambda_1}/\sqrt{2} + {\lambda_2}/\sqrt{2}]}$ module respectively. Again on the level of lattices $\V_{[0]},\V_{[2]}$ resp. $\V_{[1]},\V_{[3]}$ are connected by adding elements in the short screening lattice.

\begin{narrow}{-2cm}{0cm}
\begin{center}
\begin{tabular}{cc}
 \includegraphics{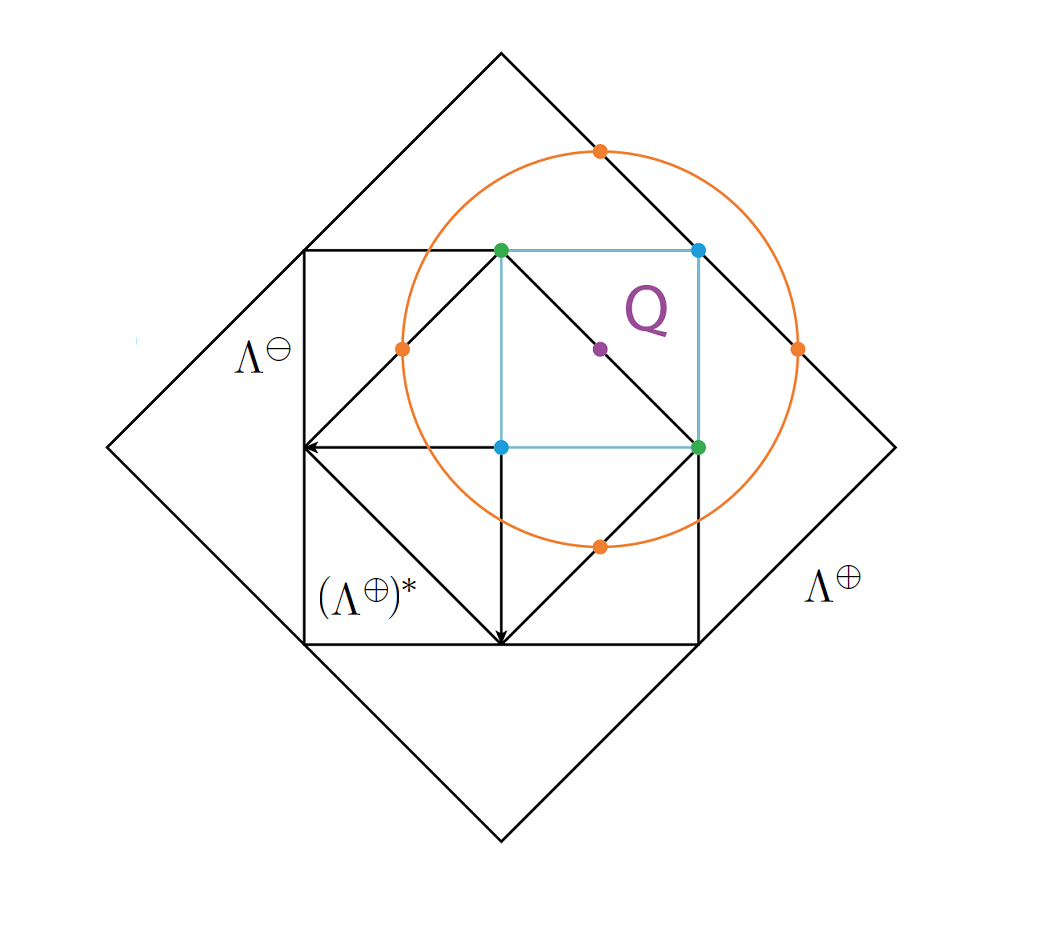} & \hspace{-1.8cm}
\includegraphics[scale=1]{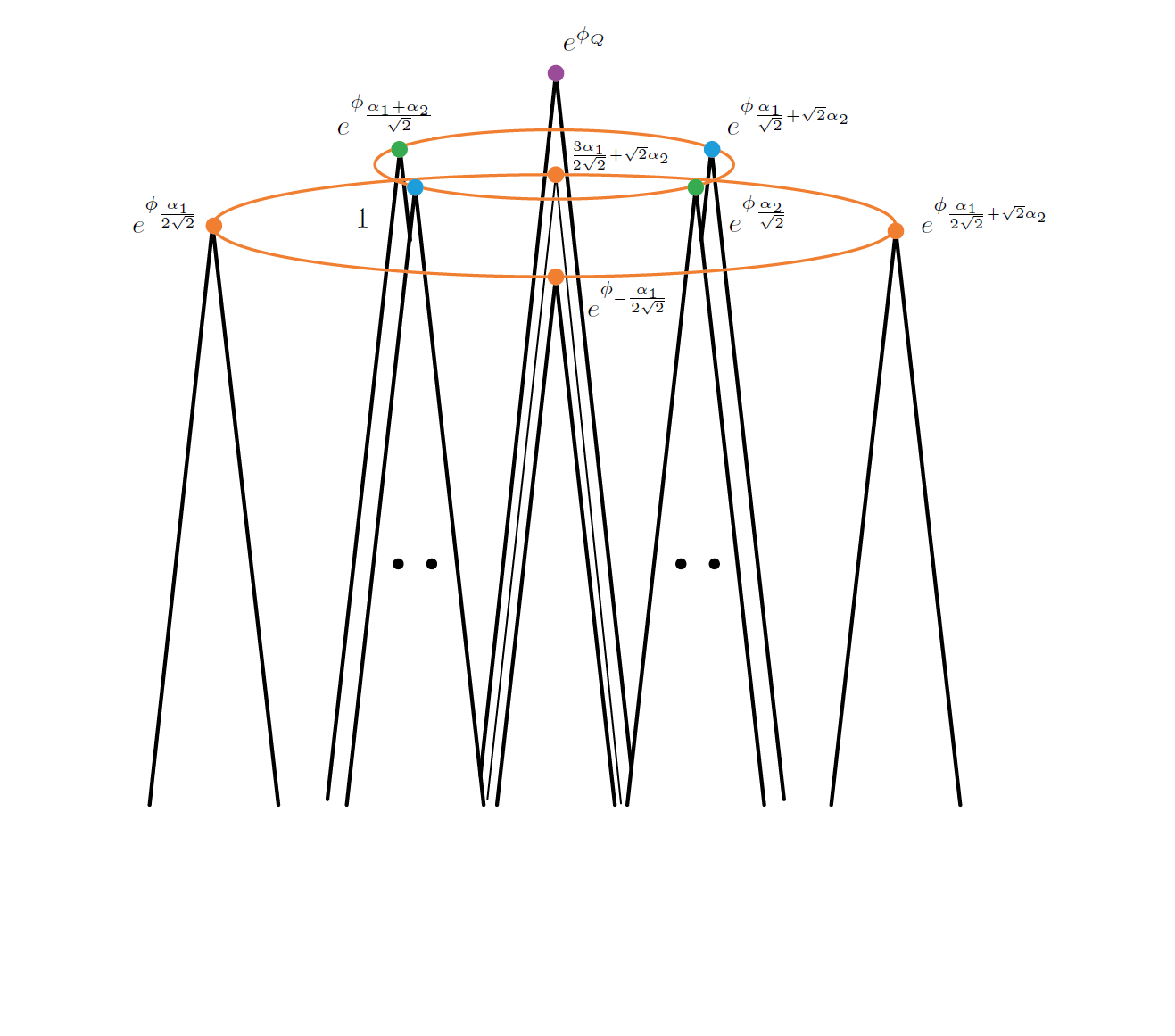}
\end{tabular}
\end{center}
\end{narrow}

\subsection{Groundstates}
Our next aim is to find the groundstates elements $e^{\phi_\alpha}$ for each module $\V_{[\lambda]}$ i.e.  the closest representatives of each $\Lambda^\olong$-coset $[\lambda]$ to $Q$. This is theoretically a hard computation problem, but easily solvable for root lattice. In the following table we write next to each module the number of groundstates, their conformal dimension and the groundstates elements:


\[
\begin{array}{l|llll}
 & \#\mbox{Groundstates} & \mbox{Conformal Dim} & \mbox{Groundstates elements } e^{\phi_\lambda}\\
\hline
\V_{[0]} & 2 & 0 & 0 \left(e^0 = \emph{1}\right), \; \alpha_1/ \sqrt{2} + \sqrt{2}\alpha_2 \\ 
\V_{[{\lambda_2}/\sqrt{2}]} & 1 & -1/4 & Q = \alpha_1/2\sqrt{2} + \alpha_2/ \sqrt{2} \\
\V_{[{\lambda_1}/\sqrt{2}]} & 2 & 0 & \alpha_1/\sqrt{2} + \alpha_2/ \sqrt{2}, \; \alpha_2/\sqrt{2}\\
\V_{[({\lambda_1}+\lambda_2)/\sqrt{2} ]} & 4 & 1/4 & 3\alpha_1/2 \sqrt{2} + \sqrt{2}\alpha_2, \; \alpha_1/2 \sqrt{2}, \\
 & & & -\alpha_1/2 \sqrt{2}, \; \alpha_1/2\sqrt{2} + \sqrt{2} \alpha_2 
\end{array}
\]


\subsection{Degeneracy}
As already mentioned $B_n$ is somewhat degenerate for $\ell=4$: The lattice norm of the short screening operator associated to the long root is an even integer (one might say it is still \emph{bosonic}) and it coincides with the respective long screening
$$({\alpha_1}^\oshort,{\alpha_1}^\oshort)=(\alpha_1/\sqrt{2},\alpha_1/\sqrt{2})=2
\qquad {\alpha_1}^\oshort=\alpha_1/\sqrt{2}=\sqrt{2}\alpha_1^\vee=\alpha_1^\olong$$
In particular by Corollary \ref{cor_NicholsRelations} no power of $\zem_{\alpha_1^\oshort}$ vanishes and, more severely, because of $p\not>4$ the Corollary does not prove Nichols algebra relations form them. To be precise, the Nichols algebra generated by the short screening operators of all long roots of $B_n,\ell=4$ is apparently trivial (commutative), while in fact they generate a Lie algebra $D_n$ .     \\

Thus, instead of using these short screening momenta $\alpha_1^\oshort,\alpha_2^\oshort$ for short screening operators, we have to use a set of simple roots for the subsystem of short roots $\alpha_2/\sqrt{2},(\alpha_1+\alpha_2)/\sqrt{2}$, which is a root subsystem of type $A_1\times A_1$. So we define:

$$\zem_1 := \zem_{- \frac{\alpha_1 + \alpha_2}{\sqrt{2}}}
\qquad \zem_2 := \zem_{- \frac{\alpha_2}{\sqrt{2}}}$$
 Their norm is an odd integer (they are \emph{fermionic}) and Corollary \ref{cor_NicholsRelations} reads:
$$(\zem_1)^2=(\zem_2)^2=0\qquad [\zem_1,\zem_2]=0$$ 
Similarly for $B_n,\ell=4$ we get $n$ commuting screening operators corresponding to the $A_1^n$ root system of short roots inside $B_n$. This degeneracy behaviour corresponds precisely to a similar degeneracy on the side of the quantum group of divided powers, as explained in Section \ref{sec_quantumgroup}. 

\subsection{Screening operators, restriced modules}\label{section_B2Kernels}
We begin by exemplary calculating the formula in Definition \ref{def_screening} the action of the screening operators on the \emph{Blue} module $\V_{[0]}$ and \emph{Green} module  $\V_{[{\lambda_1}/\sqrt{2}]}$ 
on the groundstates (which have conformal dimension $0$) and on the next level (conformal dimension $1$), which contains new pure exponential and differential polynomials times the groundstates. We will thereby determine the first layers of the intersection of the kernel of both screenings, which we denote in analogy to the case $A_1=B_1$
$$
\W=\Lambda(1):=\bigcap_{i=1,2}\Ker_{\V_{[0]}}\zem_i 
\qquad \Pi(1):=\bigcap_{i=1,2}\Ker_{\V_{[{\lambda_1}/\sqrt{2}]}}\zem_i
$$
On the Blue module $\V_{[0]}=\V_{\Lambda^\olong}$:
\begin{enumerate}[a)]
\item The screening operators applied to the groundstates elements (conformal dim. $0$)
	\begin{itemize} 
		\item $\zem_2 (1) = 0$
		\item $\zem_1 (1) = 0$
		\item $\zem_2 (e^{\phi_{\frac{\alpha_1}{ \sqrt{2}} + \sqrt{2}\alpha_2}}) = e^{\phi_{\frac{(\alpha_1 + \alpha_2)}{ \sqrt{2}}}}$
		\item $\zem_1 (e^{\phi_{\frac{\alpha_1}{\sqrt{2}} + \sqrt{2}\alpha_2}}) = e^{\phi_{\frac{\alpha_2}{ \sqrt{2}}}}$
	\end{itemize}
	\item The screening operators applied to the groundstate times a degree $1$ differential polynomial (conformal dim. $1$) is
	\begin{itemize} 
		\item $\zem_2 (\partial \phi_{-\alpha_2/\sqrt{2}}) =  e^{\phi_{-\frac{\alpha_2}{ \sqrt{2}}}}$
		\item $\zem_1 (\partial \phi_{-\alpha_2/\sqrt{2}}) = 0$
		\item $\zem_2 (\partial \phi_{-(\alpha_1 + \alpha_2)/\sqrt{2}}) = 0$
		\item $\zem_1 (\partial \phi_{-(\alpha_1 + \alpha_2)/\sqrt{2}}) = e^{\phi_{-\frac{(\alpha_1 + \alpha_2)}{\sqrt{2}}}}$
		\item $\zem_2 ( \partial \phi_{(\alpha_1 + \alpha_2)/\sqrt{2}}e^{\phi_{\sqrt{2}\alpha_1 + \frac{\alpha_2}{ \sqrt{2}}}}) = \partial \phi_{(\alpha_1 + \alpha_2)/\sqrt{2}}e^{\phi_{\frac{(\alpha_1 + \alpha_2)}{\sqrt{2}}}} $
		\item $\zem_1 ( \partial \phi_{(\alpha_1 + \alpha_2)/\sqrt{2}}e^{\phi_{\sqrt{2}\alpha_1 + \frac{\alpha_2}{ \sqrt{2}}}}) = 0$
		\item $\zem_2 (\partial \phi_{-\alpha_2/\sqrt{2}}e^{\phi_{\sqrt{2}\alpha_1 + \frac{\alpha_2}{ \sqrt{2}}}}) = 0 $
		\item $\zem_1 (\partial \phi_{-\alpha_2/\sqrt{2}}e^{\phi_{\sqrt{2}\alpha_1 + \frac{\alpha_2}{ \sqrt{2}}}}) = \partial \phi_{\alpha_2/\sqrt{2}}e^{\phi_{\frac{\alpha_2}{ \sqrt{2}}}}$ 
	\end{itemize}
	\item The screening operators applied to the next pure exponentials (conformal dim. $1$) is
	\begin{itemize} 
		\item $\zem_2 (e^{\phi_{-\frac{\alpha_1}{\sqrt{2}}}}) =  e^{\phi_{-\frac{(\alpha_1 + \alpha_2)}{\sqrt{2}}}}$
		\item $\zem_1 (e^{\phi_{-\frac{\alpha_1}{\sqrt{2}}}}) = 0$
		\item $\zem_2 (e^{\phi_{\frac{\alpha_1}{ \sqrt{2}}}}) = 0$
		\item $\zem_1 (e^{\phi_{\frac{\alpha_1}{ \sqrt{2}}}}) = e^{\phi_{-\frac{\alpha_2}{\sqrt{2}}}}$
		\item $\zem_2 (e^{\phi_{\sqrt{2}(\alpha_1 +\alpha_2)}}) = 0 $
		\item $\zem_1 ( e^{\phi_{\sqrt{2}(\alpha_1 +\alpha_2)}}) = e^{\phi_{\frac{(\alpha_1 + \alpha_2)}{\sqrt{2}}}}$
		\item $\zem_2 (e^{\phi_{\sqrt{2}\alpha_2}})) = e^{\phi_{\frac{\alpha_2}{\sqrt{2}}}} $
		\item $\zem_1 (e^{\phi_{\sqrt{2}\alpha_2}})) = 0$
	\end{itemize}
\end{enumerate}
On the Green module $\V_{[{\lambda_1}/\sqrt{2}]}$:
\begin{enumerate}[a)]
		\item  The screening operators applied to the groundstates elements  (conformal dim. $0$) is
	\begin{itemize} 
		\item $\zem_2 (e^{\phi_{\frac{(\alpha_1 + \alpha_2)}{\sqrt{2}}}}) = 0$
		\item $\zem_1 (e^{\phi_{\frac{(\alpha_1 + \alpha_2)}{\sqrt{2}}}}) = 1$
		\item $\zem_2 (e^{\phi_{\frac{\alpha_2}{\sqrt{2}}}}) = 1$
		\item $\zem_1 (e^{\phi_{\frac{\alpha_2}{\sqrt{2}}}}) = 0$
	\end{itemize}
	\item The screening operators applied to the groundstate times a degree $1$ differential polynomial (conformal dim. $1$) is
	\begin{itemize} 
		\item $\zem_2 ((\partial \phi_{\alpha_1/\sqrt{2}} + \partial \phi_{\alpha_2/\sqrt{2}})e^{\phi_{\frac{(\alpha_1 + \alpha_2)}{\sqrt{2}}}}) =  0$
		\item $\zem_1 ((\partial \phi_{\alpha_1/\sqrt{2}} + \partial \phi_{\alpha_2/\sqrt{2}})e^{\phi_{\frac{(\alpha_1 + \alpha_2)}{\sqrt{2}}}}) = 0$
		\item $\zem_2 (\partial \phi_{\alpha_2/\sqrt{2}}e^{\phi_{\frac{\alpha_2}{\sqrt{2}}}}) = 0$
		\item $\zem_1 (\partial \phi_{\alpha_2/\sqrt{2}}e^{\phi_{\frac{\alpha_2}{\sqrt{2}}}}) = 0$
		\item $\zem_2 (\partial \phi_{\alpha_2/\sqrt{2}}e^{\phi_{\frac{(\alpha_1 + \alpha_2)}{\sqrt{2}}}}) =  e^{\phi_{\frac{\alpha_1}{\sqrt{2}}}}$
		\item $\zem_1 ( \partial \phi_{\alpha_2/\sqrt{2}}e^{\phi_{\frac{(\alpha_1 + \alpha_2)}{\sqrt{2}}}}) = \partial \phi_{\alpha_1 /\sqrt{2}}$
		\item $\zem_2 (\partial \phi_{-(\alpha_1 + \alpha_2)/\sqrt{2}}e^{\phi_{\frac{\alpha_2}{\sqrt{2}}}}) =  \partial \phi_{-(\alpha_1 + \alpha_2)/\sqrt{2}}$
		\item $\zem_1 (\partial \phi_{-(\alpha_1 + \alpha_2)/\sqrt{2}}e^{\phi_{\frac{\alpha_2}{\sqrt{2}}}}) = e^{\phi_{-\frac{\alpha_1}{\sqrt{2}}}}$
	\end{itemize}
	\item The screening operators applied to the next pure exponentials (conformal dim. $1$) is
	\begin{itemize} 
		\item $\zem_2 (e^{\phi_{-\frac{(\alpha_1 + \alpha_2)}{\sqrt{2}}}}) =  0$
		\item $\zem_1 (e^{\phi_{\frac{(\alpha_1 + \alpha_2)}{\sqrt{2}}}}) = 0$
		\item $\zem_2 (e^{\phi_{-\frac{\alpha_2}{\sqrt{2}}}}) = 0$
		\item $\zem_1 (e^{\phi_{-\frac{\alpha_2}{\sqrt{2}}}}) = 0$
		\item $\zem_2 (e^{\phi_{\sqrt{2} \alpha_1 + \frac{3}{\sqrt{2}} \alpha_2}}) = e^{\phi_{\sqrt{2}(\alpha_1 +\alpha_2)}} $
		\item $\zem_1 ( e^{\phi_{\sqrt{2} \alpha_1 + \frac{3}{\sqrt{2}} \alpha_2}}) = e^{\phi_{\frac{\alpha_1}{\sqrt{2}}  + \sqrt{2} \alpha_2}}$
		\item $\zem_2 (e^{\phi_{\frac{\alpha_1}{\sqrt{2}} + \frac{3}{\sqrt{2}} \alpha_2}})) = e^{\phi_{\frac{\alpha_1}{\sqrt{2}}  + \sqrt{2} \alpha_2}} $
		\item $\zem_1 (e^{\phi_{\frac{\alpha_1}{\sqrt{2}} + \frac{3}{\sqrt{2}} \alpha_2}})) = e^{\phi_{\sqrt{2} \alpha_2}}$
	\end{itemize}
\end{enumerate}

\begin{figure}[htbp]
	\centering
	\label{fig:KerB2}
\end{figure}

On the Center module $\V_{[\lambda_2/\sqrt{2}]}$ and the Steinberg module  $\V_{[(\lambda_1+\lambda_2)/\sqrt{2}]}$ again by Corollary \ref{cor_Weylaction} the suitable short screening operators are $\zem_i^0=\id$ resp. $\zem_i^2=0$, so trivial or full kernel. Here we have $1$ resp. $4$ groundstates in conformal dimension $-1/4$ resp. $+1/4$. On the next level $+3/4$ resp. $+5/4$ the Center module has a basis of $2$ degree $1$ differential polynomials below the single groundstate and $4$ new pure exponentials, the Steinberg module has a basis of $2\cdot 4$ degree $1$ differential polynomials below the $4$ groundstates but no new pure exponentials (the next exponentials have by geometry already $+9/4$). Altogether we have for the first two $L_0$-layers of the kernel of the suitable short screening operators 

$$\begin{array}{l|cccc} 
 \mbox{Module}  & \V_{[\lambda]} &  \Ker(\zem_i)^k & \bigcap_{i=1,2} \Ker(\zem_i)^k  \\
\hline
\mbox{\emph{Blue}},\;k=1 &  2+8+\cdots & 1+4+\cdots& 1+0+\cdots   \\ 
\mbox{\emph{Green}},\;k=1 & 2+8+\cdots & 1+4+\cdots & 0+4+\cdots   \\
\mbox{\emph{Center}}& 1+6+\cdots & 0+0+\cdots & 0+0+\cdots  \\
\mbox{\emph{Steinberg}}\;k=2 & 4+8+\cdots & 4+8+\cdots & 4+8+\cdots \\
 \end{array}$$

 We again define as our new VOA the kernel of the short screening operators
 $$\mathcal{W}=\bigcap_{i=1,2}\Ker_{\V_{[0]}}\zem_i$$ 
 and study how the $4$ modules $\V_{[0]},\V_{[\lambda_1/\sqrt{2}]},\V_{[\lambda_2/\sqrt{2}]},\V_{[(\lambda_1+\lambda_2)/\sqrt{2}]}$ restricted to $\W$ decompose into kernels and cokernels of the short screening operators. For the Blue module and Green module we have submodules
 $$
 \begin{array}{lllll}
		  &			       	& \Ker \zem_1	&				&\\	
		  &\rotatebox{45}{$\supset$}   	& 		& \rotatebox{-45}{$\supset$} 	&\\
  \hspace{1.5cm}\V_{[0]}  &		& 		&				& \bigcap_{i=1,2} \Ker \zem_i=\W=:\Lambda(1) \\
		  &\rotatebox{-45}{$\supset$}   & 		& \rotatebox{45}{$\supset$} 	&\\
  		  &			       	& \Ker \zem_2	&				&\\	
 \end{array}
 $$ 
 $$
 \begin{array}{lllll}
		  &			       	& \Ker \zem_1	&				&\\	
		  &\rotatebox{45}{$\supset$}   	& 		& \rotatebox{-45}{$\supset$} 	&\\
  \V_{[\lambda_1/\sqrt{2}]}  &			& 		&				& \bigcap_{i=1,2} \Ker \zem_i=:\Pi(1) \\
		  &\rotatebox{-45}{$\supset$}   & 		& \rotatebox{45}{$\supset$} 	&\\
  		  &			       	& \Ker \zem_2	&				&\\	
 \end{array}
 $$ 
 and since these two short screenings commute we know the isomorphisms between kernels and cokernels and thus have composition series' (for $i$ arbitrary) with irreducible quotients indicated by underbraces:
 \begin{align*}
 \underbrace{\V_{[0]}\;\supset\;\sum_{i=1,2}\Ker}_{\Lambda(1)}
 \underbrace{\zem_i\;\supset\;\vphantom{\sum_{i=1,2}} \Ker}_{\Pi(1)}
 \underbrace{\zem_i\;\supset\; \bigcap_{i=1,2} \Ker}_{\Lambda(1)}
 \underbrace{\zem_i\;\supset\; \vphantom{\bigcap_{i=1,2}} \{0\}}_{\Pi(1)}\\
 \underbrace{\V_{[\lambda_1/\sqrt{p}]}\;\supset\;\sum_{i=1,2}\Ker}_{\Pi(1)}
 \underbrace{\zem_i\;\supset\;\vphantom{\sum_{i=1,2}} \Ker}_{\Lambda(1)}
 \underbrace{\zem_i\;\supset\; \bigcap_{i=1,2} \Ker}_{\Pi(1)}
 \underbrace{\zem_i\;\supset\; \vphantom{\bigcap_{i=1,2}} \{0\}}_{\Lambda(1)}
 \end{align*}
(we remark to the familiar reader that despite the obvious similarity this is not connected to the structure of the projective module in the case $A_1$; on the present modules $L_0$ acts still diagonal).\\
 
On the other hand the Center module and Steinberg module stay irreducible

$$\Lambda(2):=\V_{[\lambda_2/\sqrt{2}]}\qquad  \Pi(2):=\V_{[(\lambda_1+\lambda_2)/\sqrt{2}]}$$
 

\section{General case $B_n,\ell=4$} \label{B_n}
\subsection{Lattices}
We now consider the general case $\g=\so_{2n+1}$ with root system $B_n$ at $\ell=2p=4$. A base of the root lattice $\Lambda_R$ the set of $n$ simple roots $\{\alpha_1, \ldots, \alpha_n\}$ with $\alpha_n$ the unique short simple root and with killing form:
$$
(\alpha_i,\alpha_j)=
\begin{pmatrix}
  4 & -2 & 0 & \dots & 0\\
  -2 & 4 &  &  & \vdots\\ 
  0 &  & \ddots &  & 0\\
	\vdots &  &  & 4 & -2 \\
   0 & \dots & 0 & -2 & 2
 \end{pmatrix}$$
The coroots are then $\alpha_i^\vee = \alpha_i/2$ for $i=1 \ldots n-1$ and $\alpha_n^\vee = \alpha_n$. A streightforward calculation gives $Q$ determining the Virasoro action with central charge $c$ 

\begin{align*}
 \rho_{\g} &= \frac{1}{2} \left( \sum\limits_{j=1}^n j(2n-j)\alpha_j\right) \\
 \rho_{\g} &= \frac{1}{2} \left( \sum\limits_{j=1}^n j(2n-j)\alpha_j\right) \\
 Q &= \frac{1}{\sqrt{p}} (p {\rho_\g}^\vee - \rho_\g)  = \frac{1}{2\sqrt{2}} \sum\limits_{j=1}^n j\alpha_j  \\
 c &= \rank-12(Q,Q) = n-12\frac{n}{4}=-2n
\end{align*}
The \textit{short screening lattice} with its base is in this special case an odd integral lattice
$$\Lambda^\oshort = \frac{1}{\sqrt{2}} \Lambda_R , 
\qquad \{\alpha_1^\oshort,\ldots \alpha_n^\oshort\}=\{-\frac{1}{\sqrt{2}}\alpha_1, \ldots, -\frac{1}{\sqrt{2}} \alpha_n \}$$
The \textit{long screening lattice} with its base is the even integral lattice 
$$\Lambda^\olong = \sqrt{2} {\Lambda_R}^\vee , 
\qquad \{\alpha_1^\olong,\ldots \alpha_n^\olong\}= \left\{ \frac{\alpha_1}{\sqrt{2}}, \ldots, \frac{\alpha_{n-1}}{\sqrt{2}}, \sqrt{2} \alpha_n\right\} $$
The dual of the \textit{long screening lattice} with its base in terms of fundamental weights is
		\begin{align*}
		(\Lambda^\olong)^* &= \frac{1}{\sqrt{2}} \Lambda_W  
		\qquad \{{\lambda_1}/\sqrt{2}, \ldots, {\lambda_n}/\sqrt{2}\} = \{ \frac{\alpha_1}{\sqrt{2}} + \ldots + \frac{\alpha_n}{\sqrt{2}} , \; \frac{\alpha_1}{\sqrt{2}} + \frac{2}{\sqrt{2}}\left(\alpha_2 + \ldots + \alpha_n\right), \ldots \\
		&\ldots, \frac{\alpha_1}{\sqrt{2}} + \frac{2}{\sqrt{2}}\alpha_2 + \ldots + \frac{i-1}{\sqrt{2}}\alpha_{i-1} + \frac{i}{\sqrt{2}} \left(\alpha_i + \ldots + \alpha_n\right), \; \frac{1}{2\sqrt{2}}\left(\alpha_1 + 2\alpha_2 + \ldots + n\alpha_n \right)\}
		\end{align*}
Let us mention that as in the $B_2$ case (\ref{B_2}) we have ${\lambda_n}/\sqrt{2} = Q$.\\

\noindent
We compute the number of representations of $\V_\Lambda^\olong$ using formula (\ref{formula_numberReps}):
\begin{align*}
 \left| (\Lambda^\olong)^* / \Lambda^\olong\right|
 &=|{(\Lambda^{\olong})}^*/\Lambda^{\oshort}|\cdot |{(\Lambda^{\oshort})}/\Lambda^{\olong}|\\
 &=|\frac{1}{\sqrt{2}} \Lambda_W / \frac{1}{\sqrt{2}}\Lambda_R |\cdot |\frac{1}{\sqrt{2}}\Lambda_R / \sqrt{2}\Lambda_R^\vee|\\
 &=|\Lambda_W / \Lambda_R|\cdot \prod_{i=1}^{\rank} \frac{\ell}{(\alpha_i,\alpha_i)}
 =2\cdot 2
\end{align*}
Since $\Lambda^\oshort / \Lambda^\olong = \left\{ 0 + \Lambda^\olong, \frac{\alpha_n}{\sqrt{2}} + \Lambda^\olong \right\}$, our 4 modules are given by the cosets
\[ [0], \qquad [\frac{\alpha_n}{\sqrt{2}}], \qquad [Q], \qquad [Q + \frac{\alpha_n}{\sqrt{2}}]
\] 
respectively the \emph{Blue}, \emph{Green}, \emph{Center} and \emph{Steinberg} module.

\subsection{Groundstates}

By looking at the geometry of the four $\Lambda^\olong=\sqrt{2}\Lambda_R^\vee$ cosets in $(\Lambda^\olong)^*=\sqrt{1}{\sqrt{2}}\Lambda_W$ we solve the closest-neighbour problem for the point $Q$ to determine the groundstates of our four modules. Also for later use, the result is stated in terms of a basis of a larger lattice  with orthogonal basis given by \emph{short} screenings for short roots 
$$\Lambda^\olong\subset (\frac{1}{\sqrt{2}}A_1)^n \subset \Lambda^\oshort  
\qquad \alpha_{k\ldots n}^\oshort=\sum_{i=k}^n \alpha_i/\sqrt{2}$$


With the previous calculation for $(Q,Q)=n/4$ and picking representatives we also calculated the conformal dimension of the respective groundstates:

$$\begin{array}{l|llll}
 \mbox{Module }\V_{[\lambda]} & \#\mbox{Groundstates} & \mbox{Conformal Dim} & \mbox{Groundstates elements }\\
\hline
\emph{Blue} & 2^{n-1} & 0 & Q + \frac{1}{2}\sum\limits_{k =1}^n \epsilon_k {\alpha_{k \ldots n}}^\oshort $ with $\prod_i{\epsilon_i} = +1 \\ 
\emph{Green} & 2^{n-1} & 0 & Q + \frac{1}{2}\sum\limits_{k =1}^n \epsilon_k {\alpha_{k \ldots n}}^\oshort$ with $\prod_i{\epsilon_i} = -1\\
\emph{Center} & 1 & -\frac{n}{8} &  Q\\
\vphantom{x^{x^{x^{x^{x^{x^x}}}}}}
\emph{Steinberg} & 2n & -\frac{n}{8}+\frac{1}{2} & Q \pm {\alpha_{k \ldots n}}^\oshort
\vphantom{x^{x^{x^{x^{x^{x^x}}}}}}
\end{array}
$$

\subsection{Screening operators, restriced modules}

In the general case $B_n,\ell=4$ we have degeneracies similar to $B_2,\ell=4$, because short screening operators of long roots are equal to long screening operators, and have even integral norm (bosonic). So again we define our relevant short screening operators from the short roots in $B_n$, which form an $A_1^n$ root system with orthogonal basis $\alpha_{i\ldots n}^\oshort$ introduced in the previous section.  Their norm is an odd integer (fermionic) and Corollary \ref{cor_NicholsRelations} reads
$$\zem_{\alpha_{i\ldots n}^\oshort}:
\qquad (\zem_{\alpha_{i\ldots n}^\oshort})^2=0
\qquad [\zem_{\alpha_{i\ldots n}^\oshort},\zem_{\alpha_{j\ldots n}^\oshort}]=0$$ 
Compare this again to the degeneracy in the associated quantum group of divided powers as explained in Section \ref{sec_quantumgroup}. \\

Similar to the case $B_2$, for $B_n$ the four $\V_{\Lambda^\olong}$ modules restricted to the kernel of the commuting short screenings
$$\W=\bigcap_{i=1,\ldots n}\ker_{\V_{[0]}}\zem_{\alpha_{i\ldots n}}$$
decompose as follows:
\begin{itemize}
 \item The Blue module (vacuum module) $\V_{[0]}=\V_{\Lambda^\olong}$ has a decomposition series of $2^{n-1}$ modules $\Lambda(1)$ and $2^{n-1}$ modules $\Pi(1)$, where the socle is the vacuum submodule 
 $$\Lambda(1)=\bigcap_{i=1,\ldots n}\ker_{\V_{[0]}}\zem_{\alpha_{i\ldots n}}=\W$$
 \item The Green module has a decomposition series of $2^{n-1}$ modules $\Pi(1)$ and $2^{n-1}$ modules $\Lambda(1)$, where the socle is the submodule 
 $$\Pi(1)=\bigcap_{i=1,\ldots n}\ker_{\V_{[\alpha_n/\sqrt{2}]}}\zem_{\alpha_{i\ldots n}}$$
 \item The Center module $\V_{[Q]}$ stays irreducible, call it $\Lambda(2)$.
 \item The Steinberg module stays irreducible, call it $\Pi(2)$.
\end{itemize}

\section{Equivalence to even part of $n$ pairs of symplectic fermions}

Our main result below is that the VOA $\W$ defined as kernel of the short screenings, here $\zem_{\alpha_{k\ldots n}^\oshort}$, is as vertex algebra isomorphic to a well-known vertex algebra: The even part of $n$ pairs of symplectic fermions.   

\subsection{The vertex algebra of symplectic fermions}

The vertex algebra of a single pair of symplectic fermions $\V_{\SF}=\V_{\SF_1}$ is a super-vertex algebra with central charge $-2$ introduced by \cite{Kausch,Abe07}, our exposition follows \cite{DR16}: In vertex algebra language, the symplectic fermions are generated by fields fermionic fields $\psi,\psi^*$ with OPE
\begin{align*}
\psi(z)\psi^*(w)
&=\frac{1}{(z-w)^2}+\frac{0}{z-w}+\cdots\\
\psi^*(z)\psi(w)
&=\frac{-1}{(z-w)^2}+\frac{0}{z-w}+\cdots
\end{align*}
so the mode operators fulfill the following anticommutators
$$\psi(z)=\sum_{n\in\Z}\psi_{-n-1}z^n
\qquad \psi^*(z)=\sum_{n\in\Z}\psi^*_{-n-1}z^n
\qquad [\psi_n,\psi^*_m]_+=n\delta_{n+m=0}\;\id$$
This super-vertex algebra can equivalently be described as free super-bosons, meaning the vacuum module of the affine super Lie algebra $\hat{\mathfrak{h}}$ associated to the abelian super Lie algebra $\mathfrak{h}$, which is a purely odd nondegenerate symplectic vector space of dimension $2$.\\

Similarly one defines the super-vertex algebra of $n$-pairs of symplectic fermions \cite{Abe07}
$$\V_{\SF_n}=\left(\V_{\SF_1}\right)^n$$ 

We then consider the vertex algebra which is the even part of this super-vertex algebra
$$\V_{\SF_n^{even}}=\left(\V_{\SF_1}\right)^{n,\;even}\supsetneq \left(\V_{\SF_1^{even}}\right)^n$$ 

\subsection{Representation theory}\label{sec_SFRep}

It is known from \cite{Kausch,GK96,Abe07} that $\V_{\SF_n^{even}}$ is a logarithmic CFT with four irreducible representations
$${\chi_1}^{\mathcal{SF}_n^{even}},
\qquad {\chi_2}^{\mathcal{SF}_n^{even}},
\qquad {\chi_3}^{\mathcal{SF}_n^{even}},
\qquad {\chi_4}^{\mathcal{SF}_n^{even}}
$$
with groundstate conformal dimensions $0,1,-\frac{n}{8},-\frac{n}{8}+\frac{1}{2}$.

 In this article we only prove statements about vertex algebras and abelian categories. The unfamiliar reader should be made aware, that a celebrated result of Huang, Lepowsky, Zhang  \cite{HLZ10} shows that for any vertex algebra (admitting suitable finiteness conditions) the representation category can be canonically endowed with a tensor product, an associator and a braiding; conjecturally it is always a modular tensor category in the non-semisimple sense. However, this construction is rather involved and indirect, so it is hard to prove facts about this structure, e.g. an equivalence to a known modular tensor category. 
 
An important conjecture about symplectic fermions is that  

\begin{conjecture}[\cite{DR16} Conjecture 1.3]\label{con_SFIngo}
 $\V_{\SF_n^{even}}\md\mod$ is as modular tensor category equivalent to the abstract and explicit braided monoidal category $\SF_n$ defined in \cite{Runkel12} on $\mathcal{C}_0\oplus\mathcal{C}_1$. In particular as an abelian category it is equivalent to 
 $$\V_{\SF_n^{even}}\md\mod =\mathcal{C}_0+\mathcal{C}_1\cong \C[E_1,\ldots E_n,F_1,\ldots F_n](\mathrm{SVect})\md\mod \; \oplus \;  \mathrm{SVect} $$
with the \emph{Grassmaniam} polynomial algebra in the category of super-vector spaces generated by odd generators $E_i,F_i$ that anticommute. Under this equivalence the representations $\chi_3,\chi_4$ are $\C^{1|0},\C^{0|1}$ in the semisimple category $\mathcal{C}_1=\mathrm{SVect}$ and $\chi_1,\chi_2$ are $\C^{1|0},\C^{0|1}$ in $\mathcal{C}_0$ with zero-action of all $E_i,F_i$. \\
\end{conjecture}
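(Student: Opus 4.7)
The plan is to split the conjecture into its abelian-categorical content (the description of simple objects together with the decomposition $\mathcal{C}_0\oplus\mathcal{C}_1$) and its braided monoidal content (equivalence with Runkel's abstract $\SF_n$), since these require rather different techniques. First, I would fix notation by realizing $\V_{\SF_n^{even}}$ as the $\Z_2$-orbifold of $\V_{\SF_n}=(\V_{\SF_1})^{\otimes n}$ under the parity automorphism $\psi^i\mapsto-\psi^i,\;\psi^{i*}\mapsto-\psi^{i*}$. This orbifold description immediately predicts two sectors: an \emph{untwisted} one coming from the parity-invariant part of $\V_{\SF_n}$-modules, and a \emph{twisted} one arising from $g$-twisted modules in the sense of Dong-Li-Mason.

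Next I would classify the simples. In the untwisted sector the fermion modes have integer modings, the zero modes $\psi^i_0,\psi^{i*}_0$ generate a $2n$-dimensional Clifford algebra, and the parity-even Zhu algebra is isomorphic to the Grassmann algebra on odd generators $E_i=\psi^i_0,\;F_i=\psi^{i*}_0$ acting in $\mathrm{SVect}$; by Zhu's theorem this produces the block $\mathcal{C}_0$, with $\chi_1,\chi_2$ the two simples (even and odd copy of $\C^{1|0}$ with zero $E_i,F_i$-action). In the twisted sector the modings are half-integer, there are no zero modes, the groundstate is one-dimensional with $L_0$-eigenvalue $-n/8$ (the standard Ramond shift), and its two parities give $\chi_3,\chi_4$ with groundstate dimensions $-n/8$ and $-n/8+\tfrac{1}{2}$; this yields $\mathcal{C}_1=\mathrm{SVect}$. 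Matching the groundstate conformal dimensions and the $\Z_2$-grading of the simples with those listed in Section \ref{sec_SFRep} then establishes the abelian equivalence.

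For the braided monoidal part, I would use Huang-Lepowsky-Zhang intertwiner calculus to compute the tensor products, associator and braiding. Untwisted-untwisted fusions are governed by the OPE of the free fermions $\psi,\psi^*$ and their integrals, producing precisely the Grassmann algebra action of $E_i,F_i$ on the tensor product that Runkel prescribes in $\mathcal{C}_0$. The subtle part is the fusion involving the twisted modules $\chi_3,\chi_4$: explicit intertwiners can be written via the lattice/free-field realization of this paper (equivalently via the boson-fermion correspondence), and from their OPE one reads off the associator and braiding cocycle. Matching this data with Runkel's abstract construction of $\SF_n$ in \cite{Runkel12} gives the desired equivalence of braided tensor categories.

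The main obstacle is precisely this last step: rigorously comparing two nonsemisimple braided tensor categories requires verification of the pentagon and hexagon identities with logarithmic intertwiners, and in particular controlling the associativity isomorphisms between iterated tensor products of twisted modules, where monodromy around the Ramond branch points produces logarithmic terms. This is the reason the statement remains a conjecture rather than a theorem; a complete proof would combine the HLZ framework with the explicit free-field construction, using the latter to compute the normalization constants that enter the twisted associators and checking that they agree with Runkel's abstract cocycle data on $\C[E_1,\ldots,F_n](\mathrm{SVect})\md\mod\oplus\mathrm{SVect}$.
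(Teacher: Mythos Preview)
The paper does not contain a proof of this statement: it is presented explicitly as a \emph{conjecture} quoted from \cite{DR16}, and the paper uses it only as a standing assumption in Section~\ref{sec_quantumgroup} (``\emph{Assuming} Conjecture~\ref{con_SFIngo} \ldots''). There is therefore no proof in the paper to compare your proposal against. You recognize this yourself in your final paragraph, where you correctly identify the verification of the logarithmic associators and braidings in the HLZ framework as the genuine obstruction.

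The paper's only commentary on possible approaches appears in the paragraph immediately following the conjecture, and it differs somewhat in emphasis from yours. Rather than attacking the problem directly via Zhu algebras and twisted-sector intertwiners, the paper suggests two structural shortcuts: (i) if the conjecture is known for $n=1$, one could try to propagate it to general $n$ via the extension $(\V_{\SF_1^{even}})^n\subset\V_{\SF_n^{even}}$ that the paper constructs; and (ii) the unknown braided tensor structure on $\V_{\SF_n^{even}}\text{-}\mathrm{mod}$ must be compatible with the \emph{known} modular tensor category $\V_{\Lambda}\text{-}\mathrm{mod}$ for $\Lambda=\sqrt{2}C_n$, though the restriction functor is only lax monoidal. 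Your direct orbifold/Zhu-algebra strategy for the abelian part is standard and sound (and indeed the abelian statement for $n=1$ is attributed to \cite{FGSTsl2}); but neither your outline nor the paper's remarks constitutes a proof of the braided monoidal equivalence, which remains open.
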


For $n=1$ the structure of the abelian category coincides what has been proven in \cite{FGSTsl2}. The author would like to point to the obvious fact, that if the full conjecture is known for $n=1$, then one can extend $(\V_{\SF_1^{even}})^n$ to $\V_{\SF_n^{even}}$ as we do in the last section. On the other hand, the (a-priori unknown) braided tensor structure on $\V_{\SF_n^{even}}\md\mod$ has to be compatible with the known modular tensor category structure on the larger algebra $\V_{\Lambda}\md\mod$ for $\Lambda=\sqrt{2}C_n$ as we show. However, the restriction functor is not multiplicative (the tensor product in $\V_{\SF_n^{even}}\md\mod$ is larger, to be precise a projective cover), so we probably only have a lax monoidal functors, but the information might be enough to pin down the braided monoidal structure.

\subsection{Graded dimension} 

The following definition makes sense for every graded vector space and is crucial for the theory of vertex algebra modules. We use therefore the respective conventions:
\begin{definition}
Let $\V$ be a vertex algebra with a Virasoro action of central charge $c$, and let $\mathcal{M}$ be a vertex algebra module with $L_0$-grading $\mathcal{M}=\bigoplus_m \mathcal{M}_m$  such that the layers are finite-dimensional and the $L_0$-eigenvalues are discrete in $\Q$. Then we define the formal power series:
$$\dim(\mathcal{M})(t):= t^{-\frac{c}{24}} \sum_m dim V_m t^m$$
\end{definition}
\begin{remark}
 The unfamiliar reader should be made aware, that for vertex algebras (admitting suitable finiteness conditions) for which the representation category is semisimple, the graded dimensions (and other graded character values) on the irreducible modules piece together to a vector-valued modular form. This is the deeper reason behind the appearence of $\eta$- and $\Theta$-functions in what follows.
\end{remark}

\begin{example}
  The Virasoro module $\mathcal{M}_\lambda=\{u\exp{\phi_\lambda}\}$ over $\V_\Lambda$ with central charge $c(Q)$ has groundstate $\exp{\phi_\lambda}$ with groundstate conformal dimension 
  \begin{align*}
   c
   &=\rank-12(Q,Q)\\
   h(\lambda)
   &=\frac{1}{2}(\lambda-Q, \lambda-Q)-\frac{1}{2}(Q,Q)
  \end{align*}
  Let first $\rank(\Lambda)=1$, then the $n$-th layer has conformal dimension $h(\lambda)+n$ and the dimension of the $n$-th layer is the number of partitions $p(n)$. Alternatively, it is the polynomial ring in the variables $\partial^{k}\phi$ of degree $k>0$. Hence the graded character is the famous Dedekind $\eta$-function alternatively in the sum- or product-expansion
  \begin{align*}
  \frac{1}{\eta(t)}
  &:=t^{-\frac{1}{24}}\sum_{n\in\N_0} p(n)t^n
  =t^{-\frac{1}{24}}\prod_{k\in\N} \frac{1}{1-t^k}\\
  \dim(\mathcal{M}_\lambda)(t)
    &=\frac{t^{-\frac{c}{24}+h(\lambda)+\frac{1}{24}}}{\eta(t)}
    =\frac{t^{\frac{1}{2}(Q-\lambda,Q-\lambda)}}{\eta(t)}
  \end{align*}
  Similarly in arbitrary rank
  $$\dim(\mathcal{M}_\lambda)(t)=\frac{t^{\frac{1}{2}(Q-\lambda,Q-\lambda)}}{\eta(t)^{\rank}}$$
  The same formula holds for the induced and coinduced Virasoro modules, even though these are not isomorphic as Virasoro modules, but have the same decomposition series.
\end{example}
\begin{example}
  For the module $\V_{[\mu]}$ of the lattice vertex algebra $\V_\Lambda$ we have to sum over all $\lambda$ in the coset $[\mu]$. This gives the famous Jacobi theta function $\Theta_{\mu+\Lambda}(t)$ associated to the lattice coset $[\mu]=\mu+\Lambda$
  \begin{align*}
    \dim(\V_{[\mu]})
    &=\sum_{\lambda\in[\mu]} \frac{t^{\frac{1}{2}(Q-\lambda,Q-\lambda)}}{\eta(t)^{\rank}}
    =\frac{\Theta_{\mu-Q+\Lambda}(t)}{\eta(t)^{\rank}}
  \end{align*}
\end{example}

The graded characters of $\V_{\SF_n}$ are known, see e.g. \cite{DR16} Section 2.1: With the characters of the super-vertex algebra $\V_{\SF_n}=(\V_{\SF_1})^n$
\begin{align*}
\chi_{ns, +} &= \left(t^{\frac{1}{24}}\prod_{m = 1}^{\infty}(1 + t^m)\right)^{2n}\\
\chi_{r, +} &= \left(t^{-\frac{1}{48}}\prod_{m = 1}^{\infty}(1 + t^{m-\frac{1}{2}})\right)^{2n}
\end{align*} 
and the graded traces of the parity operator on the same modules
\begin{align*}
\chi_{ns, -} &= \left(t^{\frac{1}{24}}\prod_{m = 1}^{\infty}(1 - t^m)\right)^{2n} \\
\chi_{r, -} &= \left(t^{-\frac{1}{48}}\prod_{m = 1}^{\infty}(1 - t^{m-\frac{1}{2}})\right)^{2n}
\end{align*}
The graded dimensions of the four irreducible $\V_{\SF_n^{even}}$-modules are 
\begin{align*}
{\chi_1}^{\mathcal{SF}_n^{even}} &= \frac{1}{2}(\chi_{ns, +} + \chi_{ns, -})\\
{\chi_2}^{\mathcal{SF}_n^{even}} &= \frac{1}{2}(\chi_{ns, +} - \chi_{ns, -})\\
{\chi_3}^{\mathcal{SF}_n^{even}} &= \frac{1}{2}(\chi_{r, +} + \chi_{r, -}) \\
{\chi_4}^{\mathcal{SF}_n^{even}} &= \frac{1}{2}(\chi_{r, +} - \chi_{r, -})
\end{align*} 
One still sees that the modules actually come from decomposing the $\V_{\SF_n}$-modules, which are $n$-th tensor powers, into two composition factors 
$\chi_1,\chi_2$ resp $\chi_3,\chi_4$. This behaviour will also be visible in our $B_n,\ell=4$ model.

\subsection{Matching the characters}

We want to quickly calculate that the graded dimensions of the symplectic fermions $\V_{\SF_n^{even}}$ match the graded dimensions of our $\W$ for $\B_n,\ell=4$. This was our first hard evidence to the vertex algebra isomorphism we prove later. Since the matching of the graded dimension follows from this result, we only illustrate the case $n=2$, where $-\frac{c}{24}=+\frac{1}{6}$:\\

First we directly compare the graded dimensions to the first layers of $\Lambda(1),\Pi(1),\Lambda(2),\Pi(2)$, which we explicitly computed in Section \ref{section_B2Kernels}:
\begin{align*}
\dim(\Lambda(1))(t) &= t^{\frac{1}{6}+0}\left( 1 + 0t+\cdots\right)\\
\dim(\Pi(1))(t) &= t^{\frac{1}{6}+0}\left( 0 + 4t+\cdots\right)\\
\dim(\Lambda(2))(t) &= t^{\frac{1}{6}-\frac{1}{4}}\left( 1 + 6t+\cdots\right)\\
\dim(\Pi(2))(t) &= t^{\frac{1}{6}+\frac{1}{4}}\left( 4 + 8t+\cdots\right)
\end{align*} 
This matches nicely the characters $\chi_1,\chi_2,\chi_3,\chi_4$ in $\V_{\SF_n^{even}}$, which were defined as symmetrizations/antisymmetrizations of 
\begin{align*}
\chi_{ns, \pm} =t^{+\frac{1}{24}}(1\pm t)^4 (1\pm t^2)^4 \cdots\hspace{.30cm}
&= t^{+\frac{4}{24}}\left( 1 \pm 4t + 10 t^2 \pm \cdots \right)\\
\chi_{r, \pm} = t^{-\frac{1}{48}}(1 \pm t^{\frac{1}{2}})^{4}(1 \pm t^{\frac{3}{2}})^{4}\cdots
&=t^{-\frac{4}{48}}\left(1\pm 4t^{\frac{1}{2}}+6t\pm 8t^{\frac{3}{2}}+16t^{2}\right)
\end{align*}
We have also checked in general from the calculation of the lattice vertex algebra dimension in terms of Theta function that the graded dimensions of Blue and Green module $2^{n-1}\Lambda(1)+2^{n-1}\Pi(1)$ and Center and Steinberg module $\Lambda(2),\Pi(2)$ match the symplectic fermion character. Only for illustrative purposes and because it shows the way of our proof, we repeat this calculation for $B_1=A_1,\ell=4$ and check
$$\dim(\V_{[0]})=\dim(\Lambda(1))+\dim(\Pi)\stackrel{!}{=}\dim(\chi_1^{\SF_1^{even}})+\dim(\chi_2^{\SF_1^{even}})=\dim(\chi_{ns,+})$$
using the Jacobi triple product identity:

\begin{align*}
\chi_{[0]} &= \frac{\Theta_{[0-Q]}(t)}{\eta(t)} 
= \frac{\sum\limits_{\lambda \in [0]} t^{\frac{1}{2}( \lambda-Q, \lambda-Q)}}{t^{\frac{1}{24}}{\prod\limits_{m = 1}^{\infty} (1 - t^{m})}}
= t^{\frac{1}{8}}t^{-\frac{1}{24}}\frac{\sum\limits_{k \equiv 0 (4)} t^{\frac{1}{8}(k^2 - 2k)}}{{\prod\limits_{m = 1}^{\infty}(1 - t^{m})}} 
= t^{\frac{2}{24}} \frac{\sum\limits_{r} t^{2(r^2 - \frac{r}{2})}}{{\prod\limits_{m = 1}^{\infty}(1 - t^{m})}} \\
&= t^{\frac{2}{24}} \frac{\prod\limits_{m = 1}^{\infty} (1-t^{4m})(1+t^{-1} t^{4m-2})(1+t t^{4m-2})}{\prod\limits_{m = 1}^{\infty}(1 - t^{m})}    \\
&=t^{\frac{2}{24}} \prod\limits_{m = 1}^{\infty}(1+t^{m})(1+t^{2m})(1+t^{4m-3})(1+t^{4m-1})\\
&=\left(t^{\frac{1}{24}}\prod_{m = 1}^{\infty}(1 + t^m)\right)^{2}=\chi_{ns,+}
\end{align*}

\subsection{Isomorphism of symplectic fermions to $B_n,\ell=4$}

We have discussed thoroughly the output of the screening charge method for the datum $B_n,\ell=4$ and compared the character the even part of $n$ pairs of symplectic fermions. We now want to prove that these two vertex algebras are indeed isomorphic. The proof is rather simple: For $A_1=B_1$ this seems to be common knowledge, and since the super-vertex algebra just consists of $n$ copies, we only have to convince ourselves that the $B_n^\vee=C_n$-lattice, which is equal to the $D_n$-lattice, singles out the even part.

\begin{lemma}
  For the datum $\g=A_1,\ell=4$ the lattice $\Lambda^\oshort=\frac{1}{\sqrt{2}}\Lambda_R$ is an odd integral lattice. We have the following isomorphisms of super-vertex algebra and vertex algebras
  \begin{align*}
  \V_{\SF_1}
   &=\ker_{\V_{\Lambda^\oshort}}\zem_{-\alpha/\sqrt{2}}\\
   \V_{\SF_1^{even}}
   &\cong
   \W_{A_1,\ell=4}
  \end{align*}
\end{lemma}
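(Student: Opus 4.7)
My plan is to reduce both isomorphisms to the classical free-field realisation of the symplectic fermion super-vertex algebra \cite{Kausch,GK96,Abe07}, together with the character calculations carried out just above. First, since the generator $\alpha/\sqrt{2}$ of $\Lambda^\oshort$ has norm $1$, the lattice is odd integral, and the theorem of Section 2.1 endows $\V_{\Lambda^\oshort}$ with a super vertex algebra structure whose $\mathbb{Z}/2$-parity agrees with the $\Lambda^\oshort/\Lambda^\olong\cong\mathbb{Z}/2$ grading. Explicitly, $\V_{\Lambda^\oshort}^{\mathrm{even}}=\V_{\Lambda^\olong}$ and $\V_{\Lambda^\oshort}^{\mathrm{odd}}=\V_{[\alpha/\sqrt{2}]}$. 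The short screening $\zem:=\zem_{-\alpha/\sqrt{2}}$ shifts the $\Lambda^\oshort$-grading by the odd generator, is therefore an odd operator on this super-VA, and satisfies $\zem^2=0$ by the odd-norm Nichols relation of Corollary \ref{cor_NicholsRelations}. Granting the OPE-associativity statement anticipated at the end of Corollary \ref{cor_Weylaction} (which makes $\zem$ a super-derivation on its kernel), $\ker_{\V_{\Lambda^\oshort}}\zem$ is a sub-super-vertex algebra whose even part is by definition $\ker_{\V_{\Lambda^\olong}}\zem=\W_{A_1,\ell=4}$, so it suffices to establish the first isomorphism.

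To do so I would exhibit the two generating weight-one fermions of $\V_{\SF_1}$ inside $\ker\zem$. A first candidate is $\psi:=e^{{\phi}_{-\alpha/\sqrt{2}}}$, which is odd, of conformal dimension $1$ by Lemma \ref{lm_VirQ}, and lies in $\ker\zem$ because $\zem^2=0$. A second weight-one odd primary is produced by applying the long screening,
\[
\psi^{*}:=\zem_{\alpha\sqrt{2}}(e^{{\phi}_{-\alpha/\sqrt{2}}}),
\]
which the paper has already computed to equal $\partial\phi_{\alpha\sqrt{2}}\,e^{{\phi}_{\alpha/\sqrt{2}}}$; it preserves $L_0$ and super-commutes with $\zem$, so remains in $\ker\zem$. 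A direct Wick-style residue computation using the Hopf pairing, and the key contraction $(-\alpha/\sqrt{2},\alpha\sqrt{2})=-2$, yields the symplectic-fermion OPE
\[
\psi(z)\psi^{*}(w)\sim\frac{-2}{(z-w)^{2}}+\cdots,
\]
and after the obvious rescaling $\psi^{*}\mapsto-\tfrac{1}{2}\psi^{*}$ one obtains the normalised pair from \cite{DR16}, Section 2.1. The self-OPEs vanish because $(\psi,\psi)=(\psi^{*},\psi^{*})=0$. Since $\V_{\SF_1}$ is the free super-VA on a pair of weight-one fermions with these OPEs, one gets a surjective super-VA morphism $\V_{\SF_1}\twoheadrightarrow\ker\zem$.

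To upgrade the surjection to an isomorphism I compare graded dimensions. The previous subsection showed $\chi(\V_{[0]})=\chi_{ns,+}$, and the identical Jacobi-triple-product manipulation on $\V_{[\alpha/\sqrt{2}]}$ yields the same value; the composition series $\V_{[0]}\sim\Lambda(1)\vert\Pi(1)$ and $\V_{[\alpha/\sqrt{2}]}\sim\Pi(1)\vert\Lambda(1)$ displayed in the text therefore give
\[
\chi(\ker\zem)=\chi(\Lambda(1))+\chi(\Pi(1))=\chi_{1}+\chi_{2}=\chi_{ns,+}=\chi(\V_{\SF_1}),
\]
forcing the surjection to be an isomorphism of super-vertex algebras; restricting to even parts then yields the second isomorphism $\V_{\SF_1^{\mathrm{even}}}\cong\W_{A_1,\ell=4}$. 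The main technical obstacle, glossed over in the sketch above, is the explicit rank-one residue calculation showing that $\psi^{*}=\zem_{\alpha\sqrt{2}}(\psi)$ produces exactly the expected $(z-w)^{-2}$ pole with $\psi$, with a non-zero coefficient; once this is settled, as it is in the original papers on symplectic fermions, the rest of the argument is formal.
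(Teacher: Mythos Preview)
Your strategy mirrors the paper's closely: pick two weight-$1$ odd states in the kernel, check the symplectic-fermion OPE, and use the character match to conclude. The paper writes $\psi^{*}=\partial e^{\phi_{\alpha/\sqrt{2}}}$ directly, while your choice $\psi^{*}=\zem_{\alpha\sqrt{2}}(\psi)=\partial\phi_{\alpha\sqrt{2}}\,e^{\phi_{\alpha/\sqrt{2}}}=2\,\partial e^{\phi_{\alpha/\sqrt{2}}}$ is the same element up to a scalar; producing it via the long screening is a pleasant conceptual touch, since it explains at once why $\psi^{*}\in\ker\zem$.

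Two points need repair. First, the sign: with the Hopf pairing $\langle e^{\phi_\mu},\partial\phi_\nu\rangle=-(\mu,\nu)z^{-1}$ of Section~2.1 the $z^{-2}$-coefficient in $\Y(\psi)\psi^{*}$ comes out $+2$, not $-2$ (the minus in the pairing cancels the minus in $(-\alpha/\sqrt{2},\alpha\sqrt{2})=-2$); this only changes your rescaling. Second, and more substantially, the logic of your last step is inverted. The universal property of $\V_{\SF_1}$ as the free super-VA on two weight-$1$ fermions furnishes a morphism $\V_{\SF_1}\to\ker\zem$, but not its \emph{surjectivity}: the image is a priori only the sub-super-vertex-algebra generated by $\psi,\psi^{*}$, and nothing in your argument shows this exhausts $\ker\zem$. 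Character equality alone then cannot upgrade the map to an isomorphism (consider the zero map). The paper argues the other way round: the vacuum representation of $\V_{\SF_1}$ is irreducible, so any nonzero vertex-algebra morphism out of it is \emph{injective}; once injectivity is in hand, the equality $\chi(\V_{\SF_1})=\chi(\ker\zem)=\chi_{ns,+}$ forces surjectivity. Insert this simplicity step in place of your unjustified surjectivity claim and the proof goes through.
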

\begin{proof}
  This seems to be common knowledge (see e.g. \cite{FGSTsl2}), but let us draw a quick proof:
  The isomorphisms sends the states
  \begin{align*}
    \psi
    &\longmapsto \exp{\phi_{-\alpha/\sqrt{2}}}\\
    \psi^*
    &\longmapsto \partial\exp{\phi_{+\alpha/\sqrt{2}}}
  \end{align*}
  This maps lands clearly in $\ker_{\zem_{-\alpha/\sqrt{2}}}$, and the OPE between the images is as in symplectic fermions, in our language:
  \begin{align*}
   \Y(\exp{\phi_{-\alpha/\sqrt{2}}}) 
   \partial\exp{\phi_{+\alpha/\sqrt{2}}}
   &=\sum_{k\geq 0} z^{-1}z^{k}
   \partial\exp{\phi_{+\alpha/\sqrt{2}}}\frac{\partial^k}{k!}\exp{\phi_{-\alpha/\sqrt{2}}}
   +\sum_{k\geq 0} (+z^{-2})z^k
   \exp{\phi_{+\alpha/\sqrt{2}}}\frac{\partial^k}{k!}\exp{\phi_{-\alpha/\sqrt{2}}}\\
   &=\exp{0}z^{-2}+0\;z^{-1}+\cdots
  \end{align*}
  If we want to explicitly prove the defining relations of the mode operators
  $$\psi(z)=\sum_{n\in\Z} \Y(\psi)_n z^n
  \qquad \psi^*(z)=\sum_{n\in\Z}\Y(\psi^*)_n z^n
  \qquad [\Y(\psi)_{-1-n},\Y(\psi^*)_{-1-m}]_+=m\delta_{m+n=0}$$
  for the images $\Y(a)_{-1-n},\Y(b)_{-1-m}$ from the previous calculation of $\Y(a)b$, we invoke the associativity formula for integer OPE's (here of a super-vertex algebra)
  \begin{align*}
   [\Y(a)_{-1-n},\Y(b)_{-1-m}]_+
   &=\sum_{l\geq 0} {n \choose l} \Y(\Y(a)_{-l-1}b)_{l-(1+n+m)}\\
   &= {n \choose 0} \Y(0)_{-(1+n+m)}+{n \choose 1} \Y(e^0)_{1-(1+n+m)}\\
   &=n\delta_{n+m=0}\;\id
  \end{align*}
  Since the vacuum representation is irreducible, the vertex algebra homomorphism is injective. Then surjectivity follows from the matching of the graded dimensions calculated for both vaccum representations above.\\
  
  It is easy to see that the even subspace $\V_{\SF_1^{even}}$, i.e. differential polynomials in $\psi,\psi^*$ with an even number of factors in each monomial, maps precisely to the subspace with $\Lambda$-degrees in $\Lambda^\olong=2\Lambda^\oshort$ hence 
  $$
   \V_{\SF_1^{even}}
   \cong
   \ker_{\V_{\Lambda^\olong}}\zem_{-\alpha/\sqrt{2}}
   =\W_{A_1,\ell=4}$$
\end{proof}

\begin{corollary}
By tensoring $n$ copies we get
   \begin{align*}
  \V_{\SF_n}
   =\left(\ker_{\V_{\frac{1}{\sqrt{2}}\Lambda_{R,A_1}}}\zem_{-\alpha/\sqrt{2}}\right)^n
   &=\bigcap_{i=1,\ldots n}\ker_{\V_{\frac{1}{\sqrt{2}}\Lambda_{R,A_1^n}}}\zem_{-\alpha_i^{A_1^n}/\sqrt{2}}\\
  \end{align*}
\end{corollary}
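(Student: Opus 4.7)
The plan is to tensor $n$ copies of the vertex algebra isomorphism established for $A_1$ in the preceding lemma and identify the resulting object as the intersection of kernels in the $A_1^n$ rescaled lattice super-vertex algebra. The argument factors into three routine observations, none of which presents a serious obstacle.

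First, the root system $A_1^n$ decomposes orthogonally: $\Lambda_{R,A_1^n}=\bigoplus_{i=1}^n \Lambda_{R,A_1}$, and this decomposition is preserved by rescaling by $\frac{1}{\sqrt{2}}$. Because the Hopf pairing of Section 2 vanishes between generators from orthogonal summands (e.g.\ $\langle e^{\phi_\alpha},e^{\phi_\beta}\rangle=z^{(\alpha,\beta)}=1$ and $\langle \partial^k\phi_\alpha,\partial^l\phi_\beta\rangle=0$ whenever $(\alpha,\beta)=0$), the associated vertex operators act on distinct tensor factors and commute in the graded sense. This gives the standard factorization of generalized lattice super-vertex algebras
$$\V_{\frac{1}{\sqrt{2}}\Lambda_{R,A_1^n}}\;\cong\;\bigotimes_{i=1}^n \V_{\frac{1}{\sqrt{2}}\Lambda_{R,A_1}}.$$

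Second, by Definition \ref{def_screening} the short screening $\zem_{-\alpha_i^{A_1^n}/\sqrt{2}}=\resY(e^{\phi_{-\alpha_i/\sqrt{2}}})$ involves only fields from the $i$-th factor: in the explicit formula \eqref{defscr} applied to $u e^{\phi_\beta}$, the only surviving contributions come from the $i$-th orthogonal component of $\beta$ and from differential factors along the $i$-th summand, because all other pairings vanish. Under the factorization above we therefore have
$$\zem_{-\alpha_i^{A_1^n}/\sqrt{2}}\;=\;\id^{\otimes(i-1)}\otimes\zem_{-\alpha/\sqrt{2}}\otimes\id^{\otimes(n-i)}.$$

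Third, a routine linear algebra argument, carried out on each finite-dimensional conformal weight layer, gives
$$\bigcap_{i=1}^n \ker\bigl(\id^{\otimes(i-1)}\otimes\zem_{-\alpha/\sqrt{2}}\otimes\id^{\otimes(n-i)}\bigr)\;=\;\bigl(\ker\zem_{-\alpha/\sqrt{2}}\bigr)^{\otimes n}$$
(choose a basis of $\ker\zem_{-\alpha/\sqrt{2}}$ and a complement in each slot; the tensor of basis vectors lies in the intersection of kernels iff each basis vector does). Combining this identity with the factorization above, the preceding lemma $\V_{\SF_1}\cong\ker_{\V_{\frac{1}{\sqrt{2}}\Lambda_{R,A_1}}}\zem_{-\alpha/\sqrt{2}}$, and the definition $\V_{\SF_n}=(\V_{\SF_1})^{\otimes n}$ yields the claim. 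The only care needed is to work consistently in the category of super-vertex algebras, so that the Koszul sign rule used in the tensor products matches the one implicit in the $n=1$ lemma; this matching is automatic because both sides of the claimed identity inherit their superstructure from the $\Lambda$-grading modulo $2$.
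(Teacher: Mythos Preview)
Your proof is correct and follows exactly the approach the paper intends: the paper states this corollary without proof, treating the result as immediate from tensoring the $n=1$ lemma (as the phrase ``By tensoring $n$ copies'' signals). You have simply spelled out the three routine verifications---orthogonal factorization of the lattice super-vertex algebra, that each screening acts on a single tensor slot, and that the intersection of slotwise kernels is the tensor power of the kernel---that the paper leaves implicit.
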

The even part is again completely characterized by its lattice-degrees: They are the sublattice $\Lambda\subset \frac{1}{\sqrt{2}}\Lambda_{R,A_1^n}=\Z^n$
consisting of even sums of the basis elements, i.e. 
$$\Lambda=\left\{\sum_i x_i(\alpha_i^{A_1^n}/\sqrt{2}) \mid x_i\in\Z,\;\sum_ix_i\in 2\Z\right\}
\;\subset\;\Z^n$$
But this ''chess-board-lattice`` $\Lambda$ is precisely the $D_n$-root lattice; this embedding of the even part is already described in \cite{DR16} Thm. 3.2 and Cor. 3.3. Then the $D_n$-lattice is equal to the rescaled root lattice $\frac{1}{\sqrt{2}}C_n$, which is in turn the rescaled coroot lattice $\sqrt{2}B_n^\vee$. But this is precisely the lattice prescribed by the screening charge method for $B_n,\ell=4$ and  the images of the orthogonal basis are the dual short root
$$\Lambda=\sqrt{2}\Lambda_{R,B_n}^\vee=\Lambda^\olong_{B_n,\ell=4}
\qquad \alpha_i^{A_1^n}/\sqrt{2}=\alpha_{i\ldots n}^{B_n}/\sqrt{2}$$
which we prescribed as short screening momenta in the degenerate case $B_n,\ell=4$. Altogether this gives our final result

\begin{corollary}
 $$\V_{\SF_n^{even}}
 =\bigcap_{i=1,\ldots n}\ker_{\V_{{\sqrt{2}}\Lambda_{R,B_n}^\vee}}\zem_{-\alpha_{i\ldots n}^{B_n}}
   \cong
   \W_{B_n,\ell=4}$$
\end{corollary}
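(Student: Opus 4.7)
The plan is to deduce the statement from the preceding lemma for $A_1,\ell=4$ and its tensor-product corollary, reducing everything to an identification of lattices and of distinguished bases. First I would start from
$$\V_{\SF_n}\;\cong\;\bigcap_{i=1,\ldots,n}\ker_{\V_{\frac{1}{\sqrt{2}}\Lambda_{R,A_1^n}}}\zem_{-\alpha_i^{A_1^n}/\sqrt{2}},$$
which is the preceding corollary, and observe that under this isomorphism the $\Z/2$ super-grading on $\V_{\SF_n}$ coincides with the parity of the total $\frac{1}{\sqrt{2}}\Lambda_{R,A_1^n}$-degree, since the generators $\psi_i,\psi_i^*$ are sent to states of degree $\pm\alpha_i^{A_1^n}/\sqrt{2}$.

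Next I would identify the even-degree sublattice. The sublattice
$$\Lambda=\left\{\sum_i x_i\,\alpha_i^{A_1^n}/\sqrt{2}\;\bigg|\;x_i\in\Z,\;\sum_i x_i\in 2\Z\right\}\subset\Z^n$$
is, up to a rescaling factor $\sqrt{2}$, the standard $D_n$-root lattice. On the other hand, the basis $\{\alpha_{i\ldots n}^{B_n}/\sqrt{2}\}_{i=1,\ldots,n}$ used in Section 6 for the $B_n,\ell=4$ construction is an orthogonal basis of short roots of length $1$ spanning the short screening sublattice $(\frac{1}{\sqrt{2}}A_1)^n$, and the sublattice $\sqrt{2}\Lambda_{R,B_n}^\vee=\Lambda^\olong_{B_n,\ell=4}$ is precisely the same chess-board sublattice, because $B_n^\vee=C_n$ and $\frac{1}{\sqrt{2}}C_n\cong D_n$ as lattices. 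Under the lattice isomorphism mapping $\alpha_i^{A_1^n}/\sqrt{2}\mapsto \alpha_{i\ldots n}^{B_n}/\sqrt{2}$, the short screening momenta match in pairs. Since the vertex algebra $\V_\Lambda$ depends only on the lattice with its form, this gives an isomorphism
$$\V_{\frac{1}{\sqrt{2}}\Lambda_{R,A_1^n}}\Big|_{\text{even part}}\;\cong\;\V_{\sqrt{2}\Lambda_{R,B_n}^\vee}=\V_{\Lambda^\olong_{B_n,\ell=4}}$$
carrying the $i$-th short screening on the left to the $i$-th short screening on the right.

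Finally I would combine these two steps: the even part of $\V_{\SF_n}$ corresponds, under the lattice identification, exactly to the restriction of the previous intersection of kernels to the sublattice-part; equivalently
$$\V_{\SF_n^{even}}\;\cong\;\bigcap_{i=1,\ldots,n}\ker_{\V_{\sqrt{2}\Lambda_{R,B_n}^\vee}}\zem_{-\alpha_{i\ldots n}^{B_n}/\sqrt{2}}\;=\;\W_{B_n,\ell=4},$$
as required. I would also note that an independent check is that the graded dimensions of both sides coincide, which we verified directly in the previous subsection, so the resulting injective vertex algebra homomorphism is automatically an isomorphism layer-by-layer.

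The main obstacle I expect is not the lattice identification itself, which is elementary, but rather keeping track of the $2$-cocycle/sign conventions when restricting the super-vertex algebra structure to the even part, and verifying that the identification of the orthogonal basis $\alpha_i^{A_1^n}/\sqrt{2}$ with the non-simple basis $\alpha_{i\ldots n}^{B_n}/\sqrt{2}$ is compatible with the chosen $\kappa$ on both sides. This can be handled either by choosing cocycles symmetrically as in the remark in Section 2.1 or, more robustly, by appealing to the fact that graded dimensions already determine the isomorphism once we know both sides are simple vertex algebras containing matching generating subalgebras.
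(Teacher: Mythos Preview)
Your proposal is correct and follows essentially the same route as the paper: start from the tensor-product corollary for $\V_{\SF_n}$, observe that the even part is characterized by the chess-board sublattice $\Lambda\subset\Z^n$, identify this lattice with $D_n=\frac{1}{\sqrt{2}}C_n=\sqrt{2}\Lambda_{R,B_n}^\vee=\Lambda^\olong_{B_n,\ell=4}$, and match the orthogonal bases $\alpha_i^{A_1^n}/\sqrt{2}\leftrightarrow\alpha_{i\ldots n}^{B_n}/\sqrt{2}$. Your additional remarks on cocycle conventions and the graded-dimension check are reasonable extra caution, but the paper treats these as non-issues (cf.\ the remark in Section~2.1 that the choice of $\kappa$ does not matter here).
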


\section{The quantum group side for $B_n,\ell=4$}\label{sec_quantumgroup}

The most interesting part of the main Conjectures \ref{conj_Main} about the screening charge method for constructing LCFT is that for chosen $\g,\ell$ the representation category of the vertex algebra $\W$ constructed as kernel of the short screenings is equivalent as abelian category to the representation category of a small quantum group $u_q(\g)$ associated to this data. Moreover there should be a similar quasi-Hopf algebra with an $R$-matrix such that these are even equivalent as modular tensor categories.

A complication is that the Virasoro calculations on the CFT side demand $(\alpha_i,\alpha_i)|\ell$ (see Section \ref{sec_rescaledLattices}) , while the usual restriction on the quantum group side is that they be relatively prime. Indeed, this leads to the appearence of dual root systems (as on the LCFT side) in Lustzig's infinite quantum group of divided powers, and it also falsifies the usual generic statement that small quantum groups have a canonical $R$-matrix. Moreover there are additional degeneracies for small values of $\ell$. The second author has studied these non-relatively-prime cases in \cite{LentCCM}, and the result in particular with respect to $B_n,\ell=4$ are as follows:\\

\subsection{The degenerate Lusztig quantum group of divided powers}\label{sec_degenerateLusztig}

To every datum $\g,\ell$ with $q$ a primitive $\ell$th root of unity, where $\ell$ is relatively prime to $(\alpha_i,\alpha_i)$ there exists an infinite-dimensional Hopf algebra called \emph{Lusztig quantum group of divided powers} $U_q^{\L}(\g)$, such that there is an exact sequence of Hopf algebras
$$u_q(\g)\longrightarrow U_q^{\L}(\g)\longrightarrow U(\g)$$
where $u_q(\g)$ is the small quantum group. In fact from a theoretic view $U_q^{\L}(\g)$ is the more natural object, which is defined for every datum by specializing an integral form of the quantum group over the field $\C(q)$. In \cite{LentCCM} the second author has worked out the cases where the divisibility condition is dropped. As turns out, the infinite quantum group again decomposes into a short exact sequence of Hopf algebras (at least on the level of Borel parts and up to a symmetric braiding, which we omit here for simplicity) 
$$u_q(\g^{(0)})^+\longrightarrow U_q^{\L}(\g)^+\longrightarrow U(\g^{(\ell)})^+$$
with possibly \emph{different Lie algebras} $\g^{(0)},\g^{(\ell)}$ depending on the divisibilities as follows:
\begin{center}
\begin{tabular}{l|ll|l|l}
& $\g\qquad$ & $\ell=\ord(q)$ & $\g^{(0)}\quad$ &
$\g^{(\ell)}\quad$ \\
\hline\hline
\textnormal{Trivial cases:}
& all & $\ell=1$ & $0$ & $\g$ \\
& all & $\ell=2$ & $0$ & $\g$ \\
\cline{2-5}
\multirow{4}{3cm}{\textnormal{Generic cases:}} 
& $ADE$ & $\ell\neq 1,2$ & $\g$ & $\g$  \\
&$B_n$ & $4\nmid \ell\neq 1,2$ & $B_n$ & $B_n$  \\
&$C_n$ & $4\nmid \ell\neq 1,2$ & $C_n$ & $C_n$  \\
&$F_4$ & $4\nmid \ell\neq 1,2$ & $F_4$ & $F_4$ \\ 
&$G_2$ & $3\nmid\ell\neq 1,2,4$ & $G_2$ & $G_2$\\
\cline{2-5}
\multirow{4}{3cm}{\textnormal{Duality cases:}$\quad$}
& $B_n$ & $4|\ell\neq 4$ & $B_n$ & $C_n$ \\
& $C_n$ & $4|\ell\neq 4$ & $C_n$ & $B_n$ \\
& $F_4$ & $4|\ell\neq 4$ & $F_4$ & $F_4$ \\
& $G_2$ & $3|\ell\neq 3,6$ & $G_2$ & $G_2$\\
\cline{2-5}
\multirow{4}{3.5cm}{\textnormal{Degenerate cases:}$\quad$}
& $B_n$ & $\ell=4$ & $A_1^{n}$ & $C_n$   \\      
& $C_n$ & $\ell=4$ & $D_n$ & $B_n$ \\
& $F_4$ & $\ell=4$ & $D_4$ & $F_4$  \\ 
& $G_2$ & $\ell=3,6$ & $A_2$ & $G_2$\\
\cline{2-5}
\textnormal{Exotic case:}
& $G_2$ & $\ell=4$ & $A_3$ & $G_2$ \\ 
\hline
\end{tabular}
\end{center}
In principle we see two effects: 
\begin{itemize}
\item If $\ell$ has common divisors with the $(\alpha_i,\alpha_i)/2$, then the Lie algebra contained in $U_q^{\L}(\g)$ changes to the dual Lie algebra $\g^{(\ell)}=\g^\vee$. This is because it is generated by divided powers according to the relations 
$$E_{\alpha_i}^{(\ell_{\alpha_i})},\qquad \ell_{\alpha}:=\ord(q^{(\alpha,\alpha)})$$ 
and in general these exponents do not have to coincide for differently long $\alpha$.
\item For small values of $\ell$, to be precise $\ord(q^{(\alpha_i,\alpha)})\leq \ell$, the small quantum group degenerates. The strict inequality only holds for $G_2,\ell=4$ with very exotic behaviour. In all other degenerate cases $\ord(q^{(\alpha_i,\alpha)})=\ell$ the small order of $\ell$ implies trivial braiding $q^{(\alpha_i,\alpha_i)}=1$ for the long roots. In effect the root vectors $E_{\alpha_i}$ have infinite order and are not contained in the finite-dimensional small quantum group $u_q(\g)$, but become the short roots in the dual Lie algebra $\g^{(\ell)}=\g^\vee$. The only root vectors remaining in the small quantum group $u_q(\g^{(0)})$ are the (possibly non-simple) root vectors $E_{\alpha}$ for short roots $\alpha$. 
\end{itemize}

To be more precise in the case relevant to this paper:
\begin{lemma}
Let $B_n,\ell=4$ with $\alpha_n$ the unique short simple root. Then the
infinite-dimensional quantum group $U_q^{\L}(\g)$ has algebra generators $K_{\alpha_i}^{\pm1}$ and 
$$E_{\alpha_n},\;E_{\alpha_{n-1}+\alpha_n},\;\ldots$$
$$E_{\alpha_1},\;E_{\alpha_2}\ldots E_{\alpha_{n-1}},E_{\alpha_{n-1}+2\alpha_2}$$
$$E_{\alpha_n}^{(2)},\;E_{\alpha_{n-1}+\alpha_n}^{(2)},\;\ldots$$
(and similarly for $F$) and has the following relations between the $E_\alpha^{(k)}$
\begin{itemize}
 \item The first line of generators (short roots) square to zero and mutually commute. They generate a small quantum group $u_q(A_1^n)$ at $\ell=4$. 
 \item The second line of generators (long roots) have infinite order and generate the Lie algebra $D_n$, which is the subsystem of long roots of $B_n$ and the subset of short roots for $C_n$.
 \item The third line of generators (divided powers of short roots) have infinite order and mutually commute, the generate the Lie algebra $A_1^n$, which is the subsystem of long roots for $C_n$.
 \item The second and third line of generators together generate the dual Lie algebra $C_n$.
 \item This Lie algebra $C_n=\mathfrak{sp}_{2n}$ acts by adjoint action on $A_1^n$ as on a symplectic vector space of dimension $2n$.
\end{itemize}
\end{lemma}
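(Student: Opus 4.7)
The plan is to specialize the general structure theorem for $U_q^{\L}(\g)$ from \cite{LentCCM} to the datum $B_n,\ell=4$. The first key computation is the order $\ell_\alpha:=\ord(q^{(\alpha,\alpha)})$ for $q=e^{2\pi\i/4}$: for short $\alpha$ with $(\alpha,\alpha)=2$ we get $\ell_\alpha=2$, while for long $\alpha$ with $(\alpha,\alpha)=4$ we get $\ell_\alpha=1$. This immediately dictates which root vectors land in which piece of the exact sequence $u_q(\g^{(0)})^+\to U_q^{\L}(B_n)^+\to U(\g^{(\ell)})^+$: short-root vectors have the fermionic relation $E_\alpha^2=0$ and survive into the small part, while long-root vectors are already of order one in the classical sense (so $E_\alpha=E_\alpha^{(1)}$ is a Lusztig divided power of infinite order); the second line of generators is thus forced. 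Independently, the short-root divided powers $E_\alpha^{(2)}$ remain as infinite-order generators (third line), so the three listed families exhaust a generating set.

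Next I would identify the three subalgebras individually. The short positive roots $\epsilon_i=\alpha_i+\alpha_{i+1}+\cdots+\alpha_n$ are mutually orthogonal, so their braiding matrix $q^{(\epsilon_i,\epsilon_j)}=1$ for $i\neq j$; combined with $E_{\epsilon_i}^2=0$ this yields the Nichols algebra relations characterising $u_q(A_1^n)^+$ at $\ell=4$. The long positive roots $\pm\epsilon_i\pm\epsilon_j$ form a $D_n$ subsystem, and since their $q$-braiding is trivial their $E_\alpha,F_\alpha$ satisfy the classical Serre relations of $D_n$, generating $U(D_n)$. The divided powers $E_{\epsilon_i}^{(2)},F_{\epsilon_i}^{(2)}$ mutually commute (their $q$-commutators collapse to classical commutators at $\ell=4$) and generate $U(A_1^n)$.

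To see that $U(D_n)$ and this commuting $U(A_1^n)$ assemble into $U(C_n)$, I would exploit the root-system identification: in the realisation of $C_n$, the short roots form a $D_n$ and the long roots $\pm 2\epsilon_i$ form an orthogonal $nA_1$. The nontrivial Lusztig commutator $[E_{\alpha_{n-1}},E_{\epsilon_n}^{(2)}]_q$ produces (up to a unit) the new long-root generator $E_{\alpha_{n-1}+2\alpha_n}=E_{2\epsilon_{n-1}}$ of $C_n$; this is exactly the element listed at the end of line two, and analogous nested commutators build all missing long-root generators of $C_n$. Verifying the $C_n$-Serre relations among these is a direct unpacking of Lusztig's higher $q$-Serre identities applied at the specialisation $q^2=-1$, where the $q$-binomial coefficients collapse to integer coefficients.

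Finally, the adjoint action statement follows by checking on generators. The subspace spanned by $E_{\epsilon_i},F_{\epsilon_i}$ is stable under $\mathrm{ad}$ of both $U(D_n)$ and the long-root $U(A_1^n)$, since the commutators $[E_{\alpha_i},E_{\epsilon_j}]$ (for long $\alpha_i$) and $[E_{\epsilon_j}^{(2)},F_{\epsilon_k}]$ land back among the $E_{\epsilon}, F_{\epsilon}$. The resulting action preserves the skew pairing $\omega(E_{\epsilon_i},F_{\epsilon_j})=\delta_{ij}$ coming from the Hopf algebra's integral, so the $2n$-dimensional representation is symplectic; a count of weight multiplicities identifies it with the defining $\mathfrak{sp}_{2n}$-module. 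The main obstacle I expect is bookkeeping: several long-root vectors arise only through iterated $q$-commutators and one must keep track of Lusztig normalisations carefully to confirm both the $C_n$-Serre relations and that the mixed commutators reproduce exactly the defining symplectic action and not merely some abstract $\mathfrak{sp}_{2n}$-module. Once these verifications are made on the finitely many simple-generator pairs, the general theorem of \cite{LentCCM} guarantees that no further relations are needed.
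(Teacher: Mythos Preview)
The paper does not give a proof of this lemma at all: it is stated as a direct specialisation of the general structure theorem for Lusztig's divided-power quantum group from \cite{LentCCM}, with the preceding table already recording that for $B_n,\ell=4$ one has $\g^{(0)}=A_1^n$ and $\g^{(\ell)}=C_n$. Your approach is therefore the same as the paper's in spirit---invoke \cite{LentCCM} and read off the case at hand---but you supply considerably more detail than the paper does, computing $\ell_\alpha$ explicitly, identifying the three families of generators, and sketching why the classical pieces assemble into $C_n$ acting symplectically.

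One small slip worth flagging: you write that $[E_{\alpha_{n-1}},E_{\epsilon_n}^{(2)}]_q$ produces $E_{\alpha_{n-1}+2\alpha_n}=E_{2\epsilon_{n-1}}$, but in the standard $\epsilon$-basis $\alpha_{n-1}+2\alpha_n=(\epsilon_{n-1}-\epsilon_n)+2\epsilon_n=\epsilon_{n-1}+\epsilon_n$, which is already a long root of $B_n$ (hence in the $D_n$ piece), not a new long root of $C_n$. In fact no root vectors are ``missing'': the $D_n$ short roots and the $A_1^n$ long roots already exhaust the $C_n$ root system, so the content of the fourth bullet is purely about verifying the $C_n$ Serre relations among the existing generators, not producing new ones. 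This does not affect the overall correctness of your strategy.
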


This is obviousely the same behaviour as the short and long screenings in our LCFT above, in particular we recover the the global action of the symplectic Lie algebra by long screenings. Moreover we shall see that we get a category equivalence between the representation categories of $\W_{\B_n,\ell=4}=\V_{\SF_n^{even}}$ to the algebra $u_q(A_1^n)$ with suitable Cartan part

\subsection{Category equivalence quantum group to $B_n,\ell=4$}

In a recent work \cite{FGR17b} on $\SF_n$, extending previous work \cite{GR,FGR17a} for $n=1$, it is proven that the explicit abstract modular tensor category $\SF_n^{even}$ is equivalent to the representation category of a quasi-Hopf algebra $\tilde{u}$ with quantum Borel part of type $A_1^n$ and Cartan part $\tilde{u}=\C[\Z_4]$. \emph{Assuming} Conjecture \ref{con_SFIngo} this is the representation category of the vertex algebra $\V_{\SF_n^{even}}$, and thus  

\begin{corollary}
  The representation category of $\W_{B_n,\ell=4}$ is as a modular tensor category in the non-semisimple sense, which equivalent to the representation category of the quasi-Hopf algebra $\tilde{u}$ of type $A_1^n$.
\end{corollary}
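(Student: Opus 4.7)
The plan is to chain together three results that the paper has already assembled. First, the vertex algebra isomorphism $\W_{B_n,\ell=4}\cong \V_{\SF_n^{even}}$ proved in the previous corollary immediately yields an equivalence of abelian categories between the representation categories; but in fact more is true, since the Huang--Lepowsky--Zhang construction \cite{HLZ10} canonically transports the (conjectural) modular tensor structure along any isomorphism of vertex algebras. So as a first step I would simply observe that the full MTC structure on $\W_{B_n,\ell=4}\md\mathrm{Mod}$ agrees with the one on $\V_{\SF_n^{even}}\md\mathrm{Mod}$.

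The second step is to invoke Conjecture \ref{con_SFIngo}, which asserts that $\V_{\SF_n^{even}}\md\mathrm{Mod}$ is equivalent as a braided monoidal category to the explicit abstract category $\SF_n$ constructed in \cite{Runkel12}. This is where the statement becomes conditional: as a genuine theorem we only get an equivalence of abelian categories (which is classical in the $n=1$ case by \cite{FGSTsl2} and follows for general $n$ from the description in Section \ref{sec_SFRep}), while the full MTC equivalence is only known modulo the conjecture. I would state the corollary explicitly under this hypothesis.

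The third step is to apply the result of \cite{FGR17b}, which constructs the quasi-Hopf algebra $\tilde{u}$ of Nichols type $A_1^n$ with Cartan part $\C[\Z_4]$ and proves that $\tilde{u}\md\mathrm{Mod}\simeq \SF_n$ as modular tensor categories. Composing the three equivalences
\begin{equation*}
\W_{B_n,\ell=4}\md\mathrm{Mod}\;\simeq\;\V_{\SF_n^{even}}\md\mathrm{Mod}\;\simeq\;\SF_n\;\simeq\;\tilde{u}\md\mathrm{Mod}
\end{equation*}
yields the corollary. The only obstacle is the reliance on Conjecture \ref{con_SFIngo}; everything else is a citation. It would be worth remarking in the write-up that the Borel part of $\tilde{u}$ matches exactly the degenerate Lusztig quantum group $u_q(A_1^n)^+$ appearing via short screenings in Section \ref{sec_degenerateLusztig}, and that the global $C_n$-symmetry supplied by the long screenings is the expected quantum analogue of the symplectic Lie algebra acting on $\V_{\SF_n}$, so the picture is internally consistent with Conjecture \ref{conj_Main}.
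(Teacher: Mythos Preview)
Your proposal is correct and is essentially the same argument the paper gives: chain the vertex algebra isomorphism $\W_{B_n,\ell=4}\cong \V_{\SF_n^{even}}$, the conditional equivalence $\V_{\SF_n^{even}}\md\mathrm{Mod}\simeq \SF_n$ from Conjecture \ref{con_SFIngo}, and the result of \cite{FGR17b} identifying $\SF_n$ with $\tilde{u}\md\mathrm{Mod}$. The paper merely presents these three ingredients in a different order and, like you, follows up with a remark matching $\tilde{u}$ to the expected quantum group data.
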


We remark, that this special case seems to match our recent more general conjecture \cite{GLO17} for the quantum group side, which would assign to the data $B_n,\ell=4$ a quasi Hopf algebra $U_q^\omega(A_1^n)$ with coradical 
$$U^0=\C[(\Lambda^\olong)^*/\Lambda^\olong]=\C[\Z_4]$$
and with an abelian $3$-cocycle $(\omega,\sigma)$ given by the following quadratic form on $(\Lambda^\olong)^*/\Lambda^\olong$
$$Q(\lambda/\sqrt{2})
=e^{\pi\i(\lambda/\sqrt{2},\lambda/\sqrt{2})}
=q^{(\lambda,\lambda)}$$
which is for a generator $g$ given by $Q(g^k)=e^{\frac{2\pi\i}{8}{k^2}}$.

\section{Other degeneracy cases $C_n,F_4,G_2$ giving extensions of LCFTs}\label{sec_OtherDegeneracies}

In this article we have applied the screening method to the datum $B_n,\ell=4$ to construct a vertex algebra $\W_{B_n,\ell=4}$. Since due to degeneracy only the short screening operators of short roots $A_1^n$ could be used, this vertex algebra is an extension of the following vertex algebra, which is the result of the screening method to the datum $A_1^n,\ell=4$:  
$$\V_{\SF_n^{even}}\cong \W_{B_n,\ell=4} \quad \supset \quad 
\W_{A_1^n,\ell=4}=\left(\W_{A_1,\ell=4}\right)^n\cong \left(\V_{\SF_1^{even}}\right)^n$$
Only the left-hand side contains more module than the latter in its vacuum representation due to the additional long roots, that make additional short roots in the lattice $\Lambda^\olong=\sqrt{2}\Lambda_R^\vee$. More explicitly as $\V_{\Lambda_{A_1^n}^\olong}$-modules
$$\W_{B_n,\ell=4}\;
=\bigoplus_{[\lambda]\in \Lambda_{B_n}^\olong / \Lambda_{A_1^n}^\olong} \V_{[\lambda],A_1^n}$$
$$\Lambda_{B_n}^\olong / \Lambda_{A_1^n}^\olong
=\sqrt{2}\left(\Lambda_{R,B_n}^\vee/\Lambda_{R,A_1^n}\right)=\langle \sqrt{2}\alpha_1^\vee,\ldots \sqrt{2}\alpha_{n-1}^\vee\rangle\cong\Z_2^{n-1}
$$
Hence we can understand the category $\W_{B_n,\ell=4}\md\mod$ from this as an extension as the sense of \cite{CKM17}: We have an algebra $X$ in the category $\W_{A_1^n,\ell=4}$ on the semisimple object
$$X=\bigoplus_{[\lambda]\in \Lambda_{B_n}^\olong / \Lambda_{A_1^n}^\olong} \V_{[\lambda],A_1^n}\qquad \dim(X)=2^{n-1}$$
and the category of local modules over $X$ inside the category $\W_{A_1^n,\ell=4}$ is equivalent to the category $\W_{B_n,\ell=4}$. Thereby only those simple modules $\V_{\lambda+\Lambda_{A_1^n}^\olong}$ are local, which have integral scalar product with $\Lambda_{B_n}^\olong\supset \Lambda_{A_1^n}^\olong$, so for 
$$\lambda\in \left(\Lambda_{B_n}^\olong \right)^*$$
and all $2^{n-1}$ such modules in a coset $\lambda+\Lambda_{B_n}^\olong$ induce up to a single module in $\W_{B_n,\ell=4}$.\\

This way of describing $\W_{B_n,\ell=4}$ as extension of $\W_{A_1^n,\ell=4}\cong \V_{\SF_n^{even}}$ is in some sense opposite to our proof of the vertex algebra isomorphism, where we realized it as the even subspace of a super vertex algebra $\V_{\SF_n}$, which happend to be $\V_{\Lambda_{\B_n,\ell=4}^\oshort}$.\\

The same thing can be done for the other degenerate cases of Lusztig's small quantum group in Lemma \ref{sec_degenerateLusztig}: In each case the (degerate) 
$\W_{\g,\ell}\subset \V_{\Lambda_{\g,\ell}^\olong}$ is the kernel of the short screenings associated to short roots, which form a subset $\g^{(0)}$ of the root system of $\g$ resp. a quotient of the dual root system. Then we can describe $\W_{\g,\ell}$ as local modules over an algebra $X$ in the category. Moreover $U(\g^\vee)^+$, conjecturally even $U(\g^\vee)$,  acts by long screenings as global symmetries. In the following we list the cases and calculate $(\Lambda^\olong)^*/\Lambda^\olong$ from formula (\ref{formula_numberReps}), which counts the number of $\V_{\Lambda^\olong}$-modules. These give rise to irreducible socles for $\W_{\g,\ell}$, which are conjecturally all simple modules. Moreover, by the determinant formula for the index in dual lattices, we easily calculate the dimension of $X$ as the index $\Lambda_{\g}^\olong/\Lambda_{\g^{(0)}}^\olong$ being the squareroot of the ration of these indices. We also calculate the matching central charge:\\

\begin{narrow}{-.8cm}{0cm}
 \begin{tabular}{cc|c|cc|cc}
 $\W_{\g,\ell}$ & $\#$ simples & $\dim X$ & $\W_{\g^{(0)},\ell}$ & $\#$ simples & central charge  & Global symmetry $\g^{(\ell)}$ \\
 \hline
 $\W_{B_n,\ell=4}$ & $2\cdot 2$ & $2^{n-1}$ & $\W_{A_1^n,\ell=2}$ & $2^n\cdot 2^n$ & $-2n$ & $C_n$  \\
 $\W_{C_n,\ell=4}$ & $2\cdot 2^{n-1}$ & $2$ & $\W_{D_n,\ell=2}$ & $4\cdot 2^n$ & $3n^2-2n^3$ & $B_n$ \\
 $\W_{F_4,\ell=4}$ & $1\cdot 2^2$ & $4$ & $\W_{D_4,\ell=2}$ & $4\cdot 2^4$ & $-80$ & $F_4$ \\
 $\W_{G_2,\ell=6}$ & $1\cdot 3$ & $3$ & $\W_{A_2,\ell=6}$ & $3\cdot 3^2$ & $-30$ & $G_2$ \\
\end{tabular}~\\
\end{narrow}

In the first three cases $\V_{\Lambda^\oshort}\supset \V_{\Lambda^\olong}$  are again super-vertex algebras, as are the kernel of the screenings. It is probably helpful to understand these vertex algebras as subalgebras of them.  

\begin{question}
    The first line is the vertex algebra $\V_{\SF_n^{even}}$ of conformal charge $-2n$ extending $(\V_{\SF_1^{even}})^n$ with global symmetry $C_n=\mathfrak{sp}_{2n}$. Do the other three extensions correspond to known logarithmic conformal field theories?  
\end{question}

\begin{acknowledgementX}
We thank I. Runkel and A. Gainutdinov for interesting discussions, very helpful input and improving comments on the final version. Ilaria Flandoli is an ERAMSUS fellow and thankful for this opportunity. We are also grateful for partial support from the DFG Graduiertenkolleg 1670 at the University Hamburg.
\end{acknowledgementX}

\providecommand{\href}[2]{#2}

\end{document}